\documentclass[11pt,leqno]{article}
\usepackage[T1]{fontenc}
\usepackage{authblk}
\usepackage[margin=1.25in]{geometry}
\usepackage{amsmath,amssymb,amsthm,mathtools,booktabs,longtable,array}
\usepackage{dsfont,cite,microtype,hyperref}
\hypersetup{colorlinks,breaklinks,linkcolor=blue,citecolor=blue,urlcolor=blue,
 pdftitle={An explicit solution of the five-expert prediction PDE},
 pdfauthor={Jeff Calder and Nadejda Drenska}}

\newcommand{\R}{\mathbb{R}}

\renewcommand{\v}{\mathbf{v}}
\newcommand{\one}{\mathds{1}}

\renewcommand{\O}{{\mathcal O}}

\renewcommand{\epsilon}{\varepsilon}
\renewcommand{\bar}[1]{{\overline{#1}}}
\renewcommand{\tilde}[1]{\widetilde{#1}}
\renewcommand{\phi}{\varphi}

\DeclareMathOperator{\csch}{csch}
\DeclareMathOperator{\sech}{sech}
\DeclareMathOperator{\arccoth}{arccoth}
\DeclareMathOperator{\atanh}{atanh}
\newcommand{\dd}{\,dt}
\newcommand{\Iop}{\mathcal I}
\newcommand{\Kop}{\mathcal K}
\newcommand{\Lop}{\mathcal L}
\newcommand{\Top}{\mathcal T}
\newcommand{\Sop}{\mathcal S}

\newtheorem{theorem}{Theorem}
\newtheorem{proposition}[theorem]{Proposition}
\newtheorem{lemma}[theorem]{Lemma}

\theoremstyle{definition}
\newtheorem{remark}[theorem]{Remark}
\numberwithin{equation}{section}
\numberwithin{theorem}{section}

\title{An explicit solution of the five-expert prediction PDE and the exact optimality set of COMB\thanks{{\bf Funding:} JC acknowledges funding from an Albert and Dorothy Marden Professorship, a Simons Fellowship, and National Science Foundation Grant DMS:2436333. ND acknowledges funding from National Science Foundation Grant DMS:2407839. {\bf Source Code:} The computational supplement is archived at \url{https://doi.org/10.5281/zenodo.22723924}. {\bf Formalization:} A Lean~4 formalization of the verification, which proves both main theorems apart from the viscosity characterization of Theorem~\ref{thm:main}, is archived at \url{https://doi.org/10.5281/zenodo.22821096}. The proof development and the preparation of this paper were assisted by ChatGPT and Claude.  The authors take full responsibility for the contents of the paper.}}

\author[1]{Jeff Calder}
\affil[1]{School of Mathematics, University of Minnesota}
\author[2]{Nadejda Drenska}
\affil[2]{Department of Mathematics, Louisiana State University}

\begin{document}
\maketitle
\begin{abstract}
In this paper, we derive an explicit solution of the stationary prediction with expert advice PDE for five experts.  The formula is given in three regions. In the first two regions, it is the four-expert solution plus a single integral with an elementary positive density. In the third region, it is a finite sum of hyperbolic products whose coefficients are determined by one scalar quadrature. Our formula establishes that the direction $(1,0,1,0,0)$ is optimal throughout the ordered sector, and that the COMB strategy $(1,0,1,0,1)$ is optimal only on a lower dimensional subset of the sector (where $x_1=x_2$ and $x_3=x_4$).  This disproves the COMB optimality conjecture of Gravin, Peres and Sivan~\cite{GPS16}.  The verification of the Hamiltonian inequalities is a tedious task, part of which is completed with a computer assisted proof. The verification reduces to 21 scalar inequalities, which we prove using 147 exact rational Bernstein polynomial certificates.  The exact certificates and their independent arithmetic checks are included in a supplement to this paper, and a Lean 4 formalization machine-checks the verification and both main theorems, apart from the viscosity characterization.
\end{abstract}

\section{Introduction}\label{sec:introduction}

Prediction with expert advice is one of the oldest problems in online learning. A player (an ``investor''), observing the past performance of $n$ experts, repeatedly chooses which expert to follow; an adversary (a ``market'' or ``nature'') simultaneously controls and decides which experts are correct. The player's performance is measured by a function of regret, defined as the gap between the best expert's cumulative gain and the player's own gain. The problem originates with Cover~\cite{Cover65} and Hannan~\cite{Hannan57}. Cover solved the two-expert case exactly and established the $O(\sqrt T)$ regret rate over $T$ rounds. The subsequent development, surveyed in Cesa-Bianchi and Lugosi~\cite{CBL06}, concentrated on algorithms with good worst-case guarantees rather than exact minimax play: the weighted-majority and aggregating strategies of Littlestone--Warmuth~\cite{LW94} and Vovk~\cite{Vovk90}, and more generally the multiplicative-weights (Hedge) family~\cite{FS97,CFHHSW97}, achieve regret $O(\sqrt{T\log n})$, and Haussler, Kivinen and Warmuth~\cite{HKW95} showed this order is asymptotically unimprovable in the worst case. The setting above is the finite-horizon game, played for a known number of rounds $T$. Variants change the stopping rule: play until an expert has incurred a fixed number of losses~\cite{AWY08}, or until a horizon that is random and unknown to the player~\cite{LS14}. An important special case of the latter variant is known as the \emph{geometric stopping game}, which ends with probability $\delta$ at each step---a random horizon whose law is known to both players. The geometric stopping game is the main object of study in this paper. 

Beyond the two expert work of Cover~\cite{Cover65}, exact minimax strategies remained out of reach for fifty years. The first breakthrough was due to Gravin, Peres and Sivan~\cite{GPS16}, who studied the geometric-stopping game with randomized strategies for both players and proved that for three experts the adversary's optimal strategy is COMB: sort the experts by current regret, then advance all odd-ranked experts with probability $\tfrac12$ and all even-ranked experts with probability $\tfrac12$. Their argument relies on an exponential ansatz for the value function that does not extend to $n\ge4$, and they conjectured that COMB is asymptotically optimal for any number of experts, computing the conjectured minimax regret $\pi/(4\sqrt{2\delta})$ in the four-expert case. In the finite-horizon setting, Abbasi-Yadkori, Bartlett and Gabillon~\cite{ABG17} gave near-minimax strategies for three experts, again with a COMB-type adversary, and Gravin, Peres and Sivan~\cite{GPS17} proved tight lower bounds for multiplicative-weights families, showing that no algorithm in that class matches the minimax constant.

A conceptually different route was opened by Drenska and Kohn~\cite{Drenska17,DK20}. Following the approach of Kohn and Serfaty~\cite{KS06,KS10}, who interpreted two-person games as discretizations of nonlinear PDEs, and the tug-of-war games of Peres, Schramm, Sheffield and Wilson~\cite{PSSW09,PS08} for the infinity- and $p$-Laplacians, they treat the dynamic programming principle of the prediction game as a numerical scheme for a nonlinear equation (``numerical analysis in reverse''). Using the Barles--Souganidis framework~\cite{BS91} and the viscosity theory of Crandall, Ishii and Lions~\cite{CIL92}, they proved that the rescaled value function converges, as $T\to\infty$ or $\delta\to0$, to the unique viscosity solution of a degenerate parabolic equation in the finite-horizon case and of the degenerate elliptic equation
\begin{equation}\label{eq:pde_general}
  u(x) - \frac12 \max_{\v\in\{0,1\}^n} \v^{\mathsf T}\nabla^2 u(x)\, \v
  = \max_{i} x_i, \qquad x\in\mathbb{R}^n,
\end{equation}
in the geometric-stopping case. The solution encodes both minimax strategies: the player follows expert $i$ with probability $\partial_i u$, and the adversary's optimal moves are the binary vectors attaining the maximum in~\eqref{eq:pde_general}. Their analysis covers a general class of symmetric, Lipschitz, translation-invariant payoffs, and for $n=3$ they solved the elliptic equation explicitly, recovering the Gravin--Peres--Sivan result as a continuum limit. The same framework has since been used to obtain potential-based regret bounds for general $n$ by Kobzar, Kohn and Wang~\cite{KKW20a,KKW20b}, to treat history-dependent experts~\cite{CD21,DK22,CD23}, and to analyze limited or malicious adversaries~\cite{BEZ21,BPZ21}.

The four-expert case was settled by Bayraktar, Ekren and Zhang~\cite{BEZ20}. Working from the Drenska--Kohn PDE, they represented the value function under the conjectured COMB control as a discounted expectation of the local time of an obliquely reflected Brownian motion in an orthant~\cite{Williams95} (from the orthant, one can extend the result to $\R^n$), the local time counting crossings between the two leading experts. Differentiating the dynamic programming principle on the faces of the reflection domain led to a system of first-order hyperbolic equations for the boundary values, which they solved in closed form; a type of a maximum principle for this system then made the verification of the full nonlinear PDE tractable. The result is an explicit formula for $u$, the exact leading-order value $\pi/(4\sqrt{2\delta})$, and the asymptotic optimality of COMB for $n=4$, confirming both conjectures of~\cite{GPS16} in the four-expert case. A companion paper~\cite{BEZ20b} treats the finite-horizon four-expert problem.

For $n\ge5$ experts the picture changes substantially. Numerical experiments by Chase~\cite{Chase19} gave the first indication that COMB is not asymptotically optimal for larger $n$. Calder, Drenska and Mosaphir~\cite{CDM26} then developed numerical methods for the elliptic equation itself: a localization theorem shows the equation can be solved on a box $[-T,T]^n$, with a bound on the effect of boundary errors on a smaller interior box that decays as $T$ grows. Permutation symmetry reduces the computational domain to the ordered sector $x_1\ge\cdots\ge x_{n-1}\ge x_n=0$, making fine-grid solutions feasible up to $n=10$ experts. Their computations support the non-optimality of COMB for $n\ge5$ and single out $(1,0,1,0,0)$, in rank order, as the adversary's optimal direction for five experts. The theory is thus complete for $n\le4$, and $n=5$ is the first case in which the optimal adversarial strategy is expected to depart from COMB. 

In this paper we give an explicit solution of the five-expert PDE~\eqref{eq:pde_general} and use it to settle the question of COMB's optimality for $n=5$. Our first main result, Theorem~\ref{thm:main}, is that the rank direction $\v_* = (1,0,1,0,0)$, identified numerically in~\cite{CDM26}, attains the Hamiltonian maximum throughout the ordered sector $x_1 \ge \cdots \ge x_5$. The solution is given in three regions of the sector. In the first two regions, it reduces to the four-expert solution of~\cite{BEZ20} plus a single integral of elementary functions against an explicit positive density; in the third, it involves a finite sum of hyperbolic products whose four coefficients are determined by a single scalar quadrature. 
Our formula proves that the value of the game at the origin is $u(0) = 45\pi^2/(512\sqrt2)$, to be compared with the values $\sqrt2/4$, $\sqrt2/3$, and $\pi/(4\sqrt2)$ for two, three, and four experts, the first two obtained from the exponential sums \eqref{eq:2expert} and \eqref{eq:3expert}. We also obtain the second-order Taylor expansion of $u$ at the origin, the five-expert analogue of~\cite[equation~(3.4)]{BEZ20}.

The derivation is purely analytic and avoids the stochastic representation used in~\cite{BEZ20}. Solving~\eqref{eq:pde_general} along the characteristic direction $\v_*$ reduces the problem to a one-dimensional two-point boundary value problem, and the permutation-symmetry conditions on the faces of the sector produce coupled systems of first-order hyperbolic PDEs for the boundary traces. The key observation is that these systems can be solved explicitly: a family of truncated hyperbolic modes diagonalizes the trace operators, and the resulting propagators admit a positive Green's function. As a by-product, the same method rederives the four-expert solution of~\cite{BEZ20} in a more direct way, and it explains why the cases $n=2,3$ are elementary while $n\ge4$ are not.

Verifying that the resulting formula is the viscosity solution of \eqref{eq:pde_general} is the bulk of the work in the paper. We prove that $u$ is globally $C^2$ across the region interfaces and the permutation hyperplanes, that $u-\max_i x_i$ is bounded, and that $u$ is the unique viscosity solution in this class. The Hamiltonian inequalities $D^2_\v u \le D^2_{v_*} u$ for all sixteen non-equivalent binary controls $\v$ are reduced, by multiaffine interpolation in hyperbolic tangents and by sign principles for Laplace-type integrals, to twenty-one scalar inequalities in one variable. We prove these with $147$ exact rational Bernstein-polynomial certificates; the certificates and independent arithmetic checkers are included in a self-contained, offline-reproducible supplement, and no floating-point or numerical sign sampling enters the proof. The certificates and the reduction to them are also formalized in Lean~4 on top of Mathlib~\cite{LeanCert}, along with the regularity, the boundedness, and the optimality set of COMB; that formalization covers both main theorems apart from the viscosity characterization.

Our second main result, Theorem~\ref{thm:comb}, determines exactly where COMB is optimal for five experts. In decreasing rank order, COMB attains the Hamiltonian maximum if and only if $x_1 = x_2$ and $x_3 = x_4$; at every other point the curvature gap between $\v_*$ and COMB is strictly positive. The optimality set is thus a three-dimensional set with empty interior in $\mathbb{R}^5$. This confirms, in sharp form, the non-optimality of COMB conjectured on numerical grounds in~\cite{Chase19} and~\cite{CDM26}, and shows that the pattern established for $n \le 4$ does not persist. The optimal control is nevertheless not unique: several other rank-based controls attain the maximum on each region, and we list them in Remark~\ref{rem:additional-controls}. However, $\v_*$ is the only control optimal throughout the entire ordered sector, which also departs from the $n\leq 4$ cases where there were always two such strategies.

We were recently made aware of another preprint \cite{bayraktar2026prediction} submitted shortly after ours that establishes the same $5$-expert solution formula, though with more probabilistic techniques. We refer the reader to \cite{bayraktar2026prediction} for a detailed summary of the differences between the two approaches. 

\section{Main results}\label{sec:main}

We consider the five expert PDE
\begin{equation}\label{eq:pde}
 u(x)-\frac12\max_{\v\in\{0,1\}^5}\v^T\nabla^2u(x)\v=\phi(x),
 \qquad \phi(x)=\max_{1\leq i\leq5}x_i,
 \qquad x\in\R^5.
\end{equation}
We work in the ordered sector $\{x_1\geq x_2 \geq x_3 \geq x_4 \geq x_5\}$ and set $k=\sqrt2$. We define the scaled gaps and the correction $F$ by
\begin{equation}\label{eq:scaling}
 y_i=k(x_i-x_{i+1})\geq0,\qquad u(x)=x_1+\frac1kF(y_1,y_2,y_3,y_4).
\end{equation}
Then the ordered sector is mapped to the positive orthant $\{y_i \geq 0\}$ in gap coordinates. The explicit formula for the solution of \eqref{eq:pde} requires different coordinate systems in different regions in the sector. For the leading four experts we set
\begin{equation}\label{eq:four-coordinates}
 z_1=\frac{x_1+x_2-x_3-x_4}{k},\qquad
 z_2=\frac{x_1-x_2+x_3-x_4}{k},\qquad
 z_3=\frac{x_1-x_2-x_3+x_4}{k},
\end{equation}
so that the ordered sector becomes $\{z_1\geq z_2\geq|z_3|\}$, since $k/2=1/k$, $z_1=y_2+\tfrac12(y_1+y_3)$, $z_2=\tfrac12(y_1+y_3)$, and $z_3=\tfrac12(y_1-y_3)$. The fifth expert enters through
\begin{equation}\label{eq:coords12}
 z_4=y_4-\max(z_3,0).
\end{equation}
The sector splits into three regions:
\begin{center}
\begin{tabular}{lll}\toprule
Region & Scaled gaps & Coordinates\\\midrule
I & $y_1\leq y_3$ & $z_3\leq0\leq z_4$\\
II & $y_3\leq y_1\leq y_3+2y_4$ & $z_3\geq0$, $z_4\geq0$\\
III & $y_1\geq y_3+2y_4$ & $z_4\leq0$\\\bottomrule
\end{tabular}
\end{center}
The interfaces are $z_3=0$, where $y_1=y_3$, and $z_4=0$, where $y_1=y_3+2y_4$. In Region III we instead use the coordinates
\begin{equation}\label{eq:coords3}
 a_1=\frac{y_1-y_3-2y_4}{3},\quad
 a_2=a_1+y_4,\quad a_3=a_2+y_3,\quad a_4=a_3+y_2,
\end{equation}
for which $0\leq a_1\leq a_2\leq a_3\leq a_4$; conversely
\begin{equation}\label{eq:inverse3}
 y_1=a_1+a_2+a_3,\quad y_2=a_4-a_3,\quad
 y_3=a_3-a_2,\quad y_4=a_2-a_1.
\end{equation}
The two systems are related by $a_1=-\tfrac23z_4$ and $a_2=z_3+\tfrac13z_4$, $a_3=z_2+\tfrac13z_4$, $a_4=z_1+\tfrac13z_4$, so they coincide on the interface $z_4=0$, where $a_1=0$ and $(a_4,a_3,a_2)=(z_1,z_2,z_3)$.

The formula is built from the following functions. Write $\theta(L)=\arctan(e^{-L})$ and $\lambda(L)=\log\coth(L/2)$. Let
\begin{equation}\label{eq:I-primitive}
\begin{aligned}
 I(X)&=\int_0^X\sinh^4t\cosh^2t\dd=\frac{\sinh6X}{192}-\frac{\sinh4X}{64}
   -\frac{\sinh2X}{64}+\frac X{16}.
\end{aligned}
\end{equation}
The trace $e$, the only function in the formula that is not elementary, is defined by the quadrature
\begin{equation}\label{eq:e-quadrature}
 e(X)=3\cosh X\int_X^\infty\frac{I(t)}{\cosh^2t\sinh^5t}\dd.
\end{equation}
The integrand is regular at zero, since $I(t)=t^5/5+\O(t^7)$, and decays like $e^{-t}/3$ at infinity, so $e$ is positive and bounded. It satisfies the first-order equation
\begin{equation}\label{eq:e-first}
 e'=\tanh X\,e-\frac{3I(X)}{\cosh X\sinh^5X},
\end{equation}
and consequently
\begin{equation}\label{eq:trace-derivatives}
\begin{aligned}
 e''&=-5\coth X\,e'+6e-3\coth X,\\
 e'''&=(31+30\csch^2X)e'-30\coth X\,e+15+18\csch^2X;
\end{aligned}
\end{equation}
these are proved in Section~\ref{sec:slice}. We define the density 
\begin{equation}\label{eq:density}
 p(X)=\frac1{2\sinh^6X}\int_0^X\sinh^6t\,\sech^2t\dd,
\end{equation}
which has the elementary form
\begin{equation}\label{eq:p-elementary}
 p(X)=\tfrac12\tanh X-3\coth X+\frac{15I(X)}{\sinh^6X}.
\end{equation}
Finally, we define the \emph{truncated modes} 
\begin{equation}\label{eq:phi}
 \Phi_t(X)=\frac{\sinh(t-X)}{\sinh t}\quad(0\leq X\leq t),
 \qquad \Phi_t(X)=0\quad(X>t).
\end{equation}

The four-expert solution enters the formula as a background term. We define
\begin{equation}\label{eq:four-background}
\begin{aligned}
 F_4(z_1,z_2,z_3)={}&\theta(z_1)\cosh z_1\cosh z_2\cosh z_3\\
 &+\tfrac12\lambda(z_1)\sinh z_1\sinh z_2\sinh z_3
  -\tfrac12\sinh(z_2+z_3).
\end{aligned}
\end{equation}
Then $u_4=x_1+\tfrac1k F_4$ is the explicit four-expert solution of Bayraktar, Ekren, and Zhang~\cite[Theorem~3.1]{BEZ20} in the present notation; the identification is given in Section~\ref{sec:region12}. We are now ready to state our main result.

\begin{theorem}\label{thm:main}
Let $u(x)=x_1+\tfrac1k F(y)$ on the ordered sector, where in Regions I and II
\begin{equation}\label{eq:formula12}
 F=F_4(z_1,z_2,z_3)
 +\int_{z_1}^\infty\sinh t\,p(t)e^{-2z_4\coth t}
            \Phi_t(z_1)\Phi_t(|z_3|)\Phi_t(z_2)\dd,
\end{equation}
and in Region III
\begin{equation}\label{eq:finite3}
\begin{aligned}
 F={}&b_0\cosh a_1\cosh a_2\cosh a_3\\
     &+b_1(\sinh a_1\cosh a_2\cosh a_3+\cosh a_1\sinh a_2\cosh a_3
           +\cosh a_1\cosh a_2\sinh a_3)\\
     &+b_2(\sinh a_1\sinh a_2\cosh a_3+\sinh a_1\cosh a_2\sinh a_3
           +\cosh a_1\sinh a_2\sinh a_3)\\
     &+b_3\sinh a_1\sinh a_2\sinh a_3,
\end{aligned}
\end{equation}
with $e$ and its derivatives evaluated at $a_4$,
\begin{equation}\label{eq:coefficients}
\begin{aligned}
 b_0&=e,\qquad b_1=(e'-3)/6,\\
 b_2&=(e''+6e+18\lambda(a_4)\sinh a_4)/42,\\
 b_3&=(e'''+20e')/336+(6\lambda(a_4)\cosh a_4-13)/14,
\end{aligned}
\end{equation}
and extend $u$ to $\R^5$ by sorting the coordinates. Then $u\in C^2(\R^5)$, $u-\phi$ is bounded, and $u$ is the unique viscosity solution of \eqref{eq:pde} in this class. In the ordered sector, the rank direction $\v_*=(1,0,1,0,0)$ attains the Hamiltonian maximum everywhere and
\begin{equation}\label{eq:origin-expansion}
 u(x)=\frac{45\pi^2}{512\sqrt2}+\frac15\sum_{i=1}^5x_i
 +\frac{15\pi^2}{256\sqrt2}\Bigl(\sum_{i=1}^5x_i^2-\frac12\sum_{i<j}x_ix_j\Bigr)
 +o(|x|^2) \ \ \text{as}  \ \ x\to 0.
\end{equation}
\end{theorem}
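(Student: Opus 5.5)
The proof is a verification argument in four stages.

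\emph{Stage 1: the equation along $\v_*$.} In gap coordinates, \eqref{eq:pde} with the control $\v_*=(1,0,1,0,0)$ becomes a linear equation for $F$. The second derivative along $\v_*$ changes $x_1,x_3$ by the same amount. Hence it moves $y_1$ and $y_3$ with opposite sign and $y_2$ and $y_4$ with the same sign. In the $z$-coordinates, with $k=\sqrt2$, the direction $\v_*$ is the unit direction $\partial_{z_2}$, up to the change in $y_4$. So I expect that along the characteristic lines of $\v_*$ the equation reduces to an ODE of the form $F-\partial_s^2F=\text{source}$. Its solutions are hyperbolic modes in one variable, with the other variables entering as parameters.

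With this in hand I check $F-\tfrac12 D^2_{\v_*}F=\phi-x_1$ in each region by direct differentiation.
\begin{itemize}
\item For $F_4$ this is the four-expert identity of \cite{BEZ20}.
\item For the integral term it is enough to check each integrand. Each $\Phi_t$ is a truncated solution of $\Phi''=\Phi$, and each factor $e^{-2z_4\coth t}$ is annihilated by the relevant operator through the identity $\coth^2-\csch^2=1$.
\item In Region III each product $\sinh/\cosh(a_1)\cdots$ is annihilated, and the $a_4$-dependence is fixed by the coefficient relations \eqref{eq:coefficients} together with \eqref{eq:trace-derivatives}.
\end{itemize}

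\emph{Stage 2: regularity and boundedness.} To get $u\in C^2(\R^5)$ I check three things.
\begin{enumerate}
\item $C^2$ matching across the interfaces $z_3=0$ and $z_4=0$. On $z_4=0$ the two coordinate systems agree, so I compare the traces and normal derivatives of \eqref{eq:formula12} and \eqref{eq:finite3} up to second order. This requires expanding $e$ at $a_4=z_1$ and recognizing the integral term, at $z_4=0$, as a function of $e$ and its derivatives; the definition \eqref{eq:e-quadrature} of $e$ with the Green's function $p$ should be designed exactly for this.
\item Across the permutation hyperplanes $y_i=0$, the extension by sorting is $C^2$ if and only if the odd normal derivatives vanish there: $\partial_{y_i}(x_1+F/k)$ symmetric-compatible, i.e.\ the Neumann-type conditions $\partial_{y_i}u=0$ on $y_i=0$ after accounting for $x_1$, and a third-order condition is not needed for $C^2$.
\item Boundedness of $u-\phi$. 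This follows from the decay of $\theta,\lambda$ and of $p(t)\sinh t\,\Phi_t\cdots$ combined with the boundedness of $e$.
\end{enumerate}

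\emph{Stage 3: optimality of $\v_*$.} The main obstacle is proving $D^2_\v u\le D^2_{\v_*}u$ for the other fifteen classes of binary controls (modulo complement and sector symmetry). My approach has three steps.
\begin{enumerate}
\item Write each gap $D^2_{\v_*}u-D^2_\v u$ as a quadratic form in the derivatives of $F$.
\item In Regions I and II, split it into the four-expert gap, which is nonnegative by \cite{BEZ20}, plus an integral against the positive density $p$. Show the integrand kernel has a sign, or at least obeys a Laplace-type sign principle in $z_4$.
\item In Region III, the expression is multiaffine in $\tanh a_1,\tanh a_2,\tanh a_3$ after dividing by $\prod\cosh a_i$. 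It therefore suffices to check vertices of the cube, which leaves one-variable inequalities in $a_4$ involving $e,e'$.
\end{enumerate}
These I would certify by bounding $e$ between rational-function approximations and proving the residual polynomial inequalities via Bernstein expansions on subintervals.

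\emph{Stage 4: uniqueness and the origin expansion.} Since $u$ is $C^2$ and satisfies the equation with $\v_*$ attaining the maximum, it is a classical, hence viscosity, solution. Uniqueness in the class where $u-\phi$ is bounded follows from the comparison principle of \cite{DK20}: the zeroth-order term $u$ gives strict monotonicity, and boundedness of differences allows a standard doubling-variables argument with a penalization at infinity. Finally, the expansion \eqref{eq:origin-expansion} comes from evaluating $F$ and its derivatives at $y=0$, using the Region I/II formula at $z=0$.
\begin{itemize}
\item $F_4(0)=\theta(0)=\pi/4$.
\item The integral term reduces to $\int_0^\infty \sinh t\,p(t)\dd$-type quantities computable in closed form, yielding $45\pi^2/512$ after dividing by $k$.
\item The quadratic coefficients follow by symmetry, since the Hessian at $0$ is permutation invariant, so only two constants are needed, one of which is fixed by the PDE at the origin.
\end{itemize}
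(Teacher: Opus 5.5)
Your architecture matches the paper's: direct check of the fixed equation; face conditions and $C^2$ matching across $z_3=0$ and $z_4=0$; the four-expert base plus Laplace-type sign principles in $z_4$ in Regions I and II; multiaffine vertex reduction in $\tanh a_i$ in Region III; Bernstein certificates; a comparison principle; and symmetry at the origin. One step, however, fails as written, and that step is boundedness in Region III. Your reason (decay of $\theta,\lambda$ and boundedness of $e$) gives nothing there. Formula \eqref{eq:finite3} consists of bounded coefficients $b_j$ multiplying products $\sigma_j$ that grow like $e^{a_1+a_2+a_3}$, so boundedness is an exact cancellation. From the formula as written you do not even get linear growth, so $u$ is not yet in the class where the comparison principle gives uniqueness. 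The paper gets the bound from the Green's-function representation \eqref{eq:positive3}, $F=\mathcal P_{a_1,a_2,a_3,a_4}e$. Its atom coefficient lies in $[0,1]$ and its kernel is nonnegative with total mass at most $6$, so $0\le F\le 7\|e\|_\infty$. This rests on identifying \eqref{eq:finite3} with the propagated slice datum (Section~\ref{sec:region3}), which a pure verification of the formula does not supply. A repair within your framework is to interpolate in each $a_i$, $i\le3$, between $a_i=0$ and $a_i=a_4$ with the weights $\Phi_{a_4}(a_i)$ and $\sinh a_i/\sinh a_4$. These are nonnegative with sum at most $1$, so $|F|$ is bounded by its values at the vertices $\omega_0,\dots,\omega_3$. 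Those are explicit functions of $a_4$ whose boundedness you would then check from \eqref{eq:e-asymptotic}.

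Stage 3 is missing two ingredients. First, in Regions I and II the gap is not just the four-expert gap plus an integral against $p$. The lower limit $z_1$ of \eqref{eq:formula12} moves, and the second $z_1$-derivative produces the boundary term $-q_{z_1}^2p(z_1)e^{-2z_4\coth z_1}\Phi_{z_1}(d)\Phi_{z_1}(z_2)\le0$. Each gap therefore has the form \eqref{eq:gap-measure} with a nonpositive atom. Together with the many negative or sign-changing kernels, this means a sign for base and kernel does not settle those comparisons. They need the anchor $\mathcal G_\v\ge0$ at $z_4=0$, which the paper obtains from the $C^2$ matching with the Region III vertices plus one extra scalar $C_*$ on $V_{\mathrm I}$, and then Lemmas~\ref{lem:laplace} and~\ref{lem:cumulative}. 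You also need the corner reduction by multiaffinity in $\tanh d,\tanh z_2$, with the nonphysical corner handled by \eqref{eq:missing-corner}.

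Second, and this is the crux, your plan to enclose $e$ between rational functions is not worked out and is not routine. The function $e$ is not elementary, and the scalar gaps $a(L)e(L)+b(L)$ tend to zero at infinity. The enclosures would therefore have to reproduce \eqref{eq:e-asymptotic} with rigorous remainders on an unbounded range. Moreover, the integral scalars of Regions I and II are not yet expressions in $e$ at all. The paper never encloses $e$. Since $(e/\cosh L)'=-3I/(\cosh^2L\sinh^5L)$ by \eqref{eq:e-first}, the derivative $R=(G/(a\cosh L))'$ in \eqref{eq:first-scalar} is free of $e$ and of rational-log form. Certifying its sign requires enclosing only logarithms, and integrating from infinity, where $G/(a\cosh L)\to0$, gives $G\ge0$. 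Where $a$ changes sign, the second-order operators \eqref{eq:second-scalar3} and \eqref{eq:second-scalar12} play this role. The integrals against $\mu$ become such expressions through the moments \eqref{eq:moments01}--\eqref{eq:momentrec}, which start from $g=e-e_4$ with $g''-g=p$. These same identities are what make your $z_4=0$ matching and the face condition on $y_4=0$ in Region I actually checkable.

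Smaller corrections:
\begin{itemize}
\item $b_{\v_*}=(1,-1,1,0)$, so $y_1$ and $y_3$ increase, $y_2$ decreases, and $y_4$ and $z_4$ stay fixed. The direction is exactly $\partial_{z_2}$ in Regions I and II and $\partial_{a_3}$ in Region III. The fixed equation is therefore immediate, and neither $e^{-2z_4\coth t}$ nor \eqref{eq:coefficients} enters it.
\item The face conditions are $u_{x_i}=u_{x_{i+1}}$, that is \eqref{eq:faces}, not $\partial_{y_i}u=0$.
\item You must also control second derivatives at the collisions $z_1=0$ and $a_4=0$.
\item At the origin $F(0)=e_4(0)+g(0)=e(0)$, so $u(0)=e(0)/k=45\pi^2/(512\sqrt2)$ by Lemma~\ref{lem:e0}, not $45\pi^2/512$ after dividing by $k$.
\item The second Hessian constant at the origin is fixed by $\nabla^2u\,\one=0$.
\end{itemize}
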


The two expressions for $F$ agree on the interface $z_4=0$ (Section~\ref{sec:region3}). In Regions I and II the formula requires one quadrature of elementary functions; in Region III only the value $e(a_4)$ is not elementary, since \eqref{eq:e-first} and \eqref{eq:trace-derivatives} eliminate the derivatives in \eqref{eq:coefficients}. The constant term $u(0)=45\pi^2/(512\sqrt2)$ in \eqref{eq:origin-expansion} is obtained in Lemma~\ref{lem:e0} by evaluating the quadrature \eqref{eq:e-quadrature} at $X=0$; it is the five-expert counterpart of the four-expert value $u_4(0)=\pi/(4\sqrt2)$ of \cite{BEZ20}. The expansion \eqref{eq:origin-expansion} is the five-expert analogue of \cite[equation~(3.4)]{BEZ20} and is proved in Section~\ref{sec:verification} from the $C^2$ regularity, the symmetries, and the equation at the origin.

Our second result identifies the set on which the COMB control is optimal. In decreasing rank order, COMB is $\v_C=(1,0,1,0,1)$. Its complement $\one-\v_C=(0,1,0,1,0)$ has the same curvature, because $\nabla^2u\one=0$. Define the physical curvature gap
\begin{equation}\label{eq:comb-gap}
 \Delta(x)=D_{\v_*}^2u(x)-D_{\v_C}^2u(x).
\end{equation}
Since $\v_*$ attains the Hamiltonian maximum by Theorem~\ref{thm:main}, $\Delta\geq0$ everywhere in the ordered sector. The following theorem identifies the set where equality holds.

\begin{theorem}\label{thm:comb}
For $x_1\geq\cdots\geq x_5$, COMB attains the Hamiltonian maximum, i.e., $\Delta(x)=0$, if and only if
\begin{equation}\label{eq:comb-set}
x_1=x_2\quad\text{and}\quad x_3=x_4.
\end{equation}
At every other point $\Delta>0$. 
\end{theorem}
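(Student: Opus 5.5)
The plan is to compute $\Delta$ in gap coordinates and show that on Regions I and II it is an integral with a nonnegative integrand that vanishes identically exactly on the set \eqref{eq:comb-set}. Region III is then handled separately by showing $\Delta>0$ there.

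\textbf{Gap-coordinate formula for $\Delta$.} Since $u=x_1+\tfrac1kF(y)$ with $y_i=k(x_i-x_{i+1})$, a binary control $\v$ acts on $y$ through $w_i=k(v_i-v_{i+1})$. Hence $D^2_\v u=k\,\hat w^T\nabla^2F\,\hat w$, where $\hat w_i=v_i-v_{i+1}$. For $\v_*$ we get $\hat w_*=(1,-1,1,0)$, and for $\v_C$ we get $\hat w_C=\hat w_*-e_4$. Therefore
\[
\Delta=k\bigl(2\,e_4^T\nabla^2F\,\hat w_*-\partial_{44}F\bigr)
=k\,\partial_{y_4}\bigl(2(\partial_{y_1}-\partial_{y_2}+\partial_{y_3})-\partial_{y_4}\bigr)F .
\]
This uses the $C^2$ regularity from Theorem~\ref{thm:main}, so the identity can be read pointwise, including on the interfaces.

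\textbf{Regions I and II.} In these regions I would change to the $z$-coordinates \eqref{eq:four-coordinates}--\eqref{eq:coords12}. The operator $\partial_{y_1}-\partial_{y_2}+\partial_{y_3}$ annihilates $z_1$, $z_3$ and $z_4$ and sends $z_2$ to $1$, so it equals $\partial_{z_2}$. Also $\partial_{y_4}=\partial_{z_4}$, and $F_4$ does not depend on $z_4$. So only the integral term $G$ in \eqref{eq:formula12} contributes, and $\Delta=k\,\partial_{z_4}(2\partial_{z_2}-\partial_{z_4})G$. Differentiating under the integral produces the factors $-2\coth t$ and $4\coth^2t$ from the exponential, and $\Phi_t'(z_2)=-\cosh(t-z_2)/\sinh t$; the derivative is legitimate since $z_2\le z_1<t$. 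Using
\[
\cosh(t-z_2)-\coth t\,\sinh(t-z_2)=\frac{\sinh z_2}{\sinh t},
\]
the integrand becomes
\[
\frac{4\coth t}{\sinh t}\,p(t)\,e^{-2z_4\coth t}\,\Phi_t(z_1)\,\Phi_t(|z_3|)\,\sinh z_2 .
\]
For $t>z_1\ge z_2\ge|z_3|$ every factor except $\sinh z_2$ is strictly positive. Hence $\Delta>0$ exactly when $z_2>0$, and $\Delta=0$ exactly when $z_2=0$. In the sector, $z_2=0$ forces $y_1=y_3=0$, which is \eqref{eq:comb-set}.

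\textbf{Region III.} Here $y_1\ge y_3+2y_4$, so the set \eqref{eq:comb-set} meets Region III only on the common face with Region II, where it is already covered. It therefore remains to show $\Delta>0$ on Region III minus that face, in particular for $a_1>0$.

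In the $a$-coordinates \eqref{eq:coords3}, the direction $\hat w_*$ is $\partial_{a_3}$, and $e_4$ maps to $(-\tfrac23,\tfrac13,\tfrac13,\tfrac13)$ in $(a_1,a_2,a_3,a_4)$. So $\Delta$ is an explicit second-order expression in the finite sum \eqref{eq:finite3}. Its $a_4$-derivatives hit $b_0,\dots,b_3$; I would reduce these to $e$ and $e'$ alone using \eqref{eq:e-first} and \eqref{eq:trace-derivatives}.

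After dividing by $\cosh a_1\cosh a_2\cosh a_3$, $\Delta$ becomes multiaffine in $(\tanh a_1,\tanh a_2,\tanh a_3)$ on the ordered simplex $0\le\tanh a_1\le\tanh a_2\le\tanh a_3\le\tanh a_4$. A multiaffine function on this simplex is controlled by its values at the vertices. This reduces positivity to finitely many one-variable inequalities in $a_4$, involving $e(a_4)$, $e'(a_4)$ and elementary functions. I would check that at least one vertex value is strictly positive and that the vertex combinations giving $\Delta=0$ are incompatible with $a_1>0$.

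\textbf{Main obstacle.} The hard step is this last one: proving the resulting scalar inequalities in $a_4$. The plan is to use the bounds for $e$ already available (positivity and the quadrature representation \eqref{eq:e-quadrature}). Where those do not suffice, I would invoke the Bernstein-certificate inequalities underlying the Hamiltonian verification of Theorem~\ref{thm:main}, adding certificates for the strict versions where needed.
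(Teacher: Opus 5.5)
Your argument for Regions I and II is the paper's. You use the same reduction $\Delta=k\,\partial_{z_4}(2\partial_{z_2}-\partial_{z_4})F$, the same identity $\cosh(t-z_2)-\coth t\,\sinh(t-z_2)=\sinh z_2/\sinh t$, and arrive at the same positive kernel as in \eqref{eq:comb-positive}. Your observation that the set \eqref{eq:comb-set} meets Region III only at $a_1=a_2=a_3=0$ is also correct. So it suffices to show $\Delta>0$ when $a_1>0$. One small point: at $z_1=0$, $z_4=0$ your formula has the form $0\cdot\infty$. There you should use continuity of $\Delta$ along $y_1=y_3=0$, or the paper's permutation argument.

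The gap is in Region III, which is where the theorem needs new input. There your reduction is misstated and the decisive facts are missing.

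First, a multiaffine function on the simplex $0\le\xi_1\le\xi_2\le\xi_3\le1$, with $\xi_i=\tanh a_i/\tanh a_4$, is not controlled by its values at the simplex vertices. Already $-\xi_1(1-\xi_2)$ vanishes at the three vertices of $0\le\xi_1\le\xi_2\le1$ and is negative inside. You must pass to the cube $[0,1]^3$ and use all eight corners. For instance, the off-simplex corner $(1,0,1)$ has weight $\xi_1(1-\xi_2)\xi_3>0$ at interior points. The Hamiltonian proof gets away with the four simplex vertices only because it treats all sixteen controls together: permuting $a_1,a_2,a_3$ moves the other corners onto the simplex for a permuted control. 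For the single control COMB this is not available.

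Second, your criterion that at least one vertex value be strictly positive is not sufficient. What is needed is the exact corner table of the COMB gap:
\begin{itemize}
\item $0$ at $(0,0,0)$ and $(0,1,0)$;
\item $D_1,B_2,D_1,A_2,B_2,P_3$ at $(0,0,1),(0,1,1),(1,0,0),(1,0,1),(1,1,0),(1,1,1)$ respectively.
\end{itemize}
You also need the strict inequalities $D_1,B_2,A_2,P_3>0$ for every $L>0$. These do not follow from the nonstrict Hamiltonian inequalities. Nor do they follow from positivity of $e$ and its quadrature, since each generator has the form $a(L)e(L)+b(L)$ with cancellation. The paper obtains them by noting that each of the $28$ relevant Bernstein certificates for $-R$ has a strictly positive coefficient. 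Hence $R<0$ in every interval interior, and integration from infinity gives $G/(aC)>0$.

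With these facts, multiaffine interpolation gives the factorization \eqref{eq:comb-factor}. It is strictly positive whenever $\xi_1>0$, which is exactly the statement you reduced to.
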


\begin{remark}\label{rem:additional-controls}
The optimal control is not unique even away from COMB's equality set. In addition to $\v_*=(1,0,1,0,0)$, the following controls are optimal throughout the indicated regions:
\begin{equation}\label{eq:additional-controls}
 \mathrm I:\ 10011,\qquad
 \mathrm{II}:\ 10010,\qquad
 \mathrm{III}:\ 10001\ \hbox{and}\ 10010.
\end{equation}
Their directions are $\partial_{z_3}$ in Regions I and II, and $\partial_{a_1}$ and $\partial_{a_2}$ in Region III.
This lists additional optimizers; further ties may occur on boundaries.
\end{remark}

The rest of the paper is organized into two sections; in Section~\ref{sec:derivation} we derive the formula and in Section~\ref{sec:verification} we verify the Hamiltonian inequalities that establish the formula indeed gives the solution of \eqref{eq:pde}. Theorem \ref{thm:comb} is an immediate consequence of this verification work. The verification is a tedious process, and uses a computer assisted proof to do much of the heavy lifting. This is described in detail in Section \ref{sec:verification} and in the supplement. 


\section{Derivation of the five-expert formula}\label{sec:derivation}

This section derives the formula of Theorem~\ref{thm:main}, following the
steps outlined below. The proof that the result solves \eqref{eq:pde} is given in Section~\ref{sec:verification}.

Before summarizing our approach, we make some observations about \eqref{eq:pde_general}. Since the payoff satisfies $\phi(x+c\one)=\phi(x)+c$ for all $c\in\R$ and is permutation invariant in the coordinates, the same is true for the viscosity solution $u$ of \eqref{eq:pde_general}, by uniqueness. Wherever $u$ is differentiable it follows that $\nabla u\cdot \one = 1$ and $u_{x_i}=u_{x_j}$ whenever $x_i=x_j$. In particular, this leads to an origin boundary condition 
\begin{equation}\label{eq:origin}
\nabla u(0) = \frac{1}{n}\one
\end{equation}
for the general $n$ expert problem. 

As a warmup and to illustrate the main ideas in this section, let us recall how to derive the solution formula for \eqref{eq:pde_general} in the case of $n=2$ and $n=3$ experts. For $n=2$, define $F(x) = u(x,0)$. Since $\v =(1,0)$ is optimal for 2 experts \eqref{eq:pde_general} reduces to 
\[F - \frac{1}{2}F''(x) = x \ \ \text{ for } x \geq 0.\]
By \eqref{eq:origin} we have $F'(0) = \frac{1}{2}$, and since $u$ has at most linear growth, we can ignore the exponentially growing solution $e^{\sqrt 2 x}$. Thus
\[F(x) = x + \frac{1}{2\sqrt 2}e^{-\sqrt 2 x}.\]
Since $\nabla u\cdot \one = 1$ we arrive at the $n=2$ expert formula
\begin{equation}\label{eq:2expert}
u(x) = x_2 + F(x_1-x_2) = x_1 + \frac{1}{2\sqrt 2}e^{\sqrt 2(x_2 - x_1)}.
\end{equation}
A very similar argument for $n=3$ with optimal strategy $\v=(1,0,0)$ yields 
\begin{equation}\label{eq:3expert}
u(x) = x_1 + \frac{1}{2\sqrt 2}e^{\sqrt 2(x_2 - x_1)} + \frac{1}{6\sqrt 2}e^{\sqrt 2(2x_3 - x_2 - x_1)}.
\end{equation}
The key fact making both of these cases trivial is that there exists an optimal strategy, $\v=(1,0)$ for $n=2$ and $\v=(1,0,0)$ for $n=3$, whose characteristic direction shoots off to infinity in one direction, allowing us to neglect the exponentially growing mode. This fails for the $n=4$ and $n=5$ expert problems. For $n=4$ two strategies are optimal $\v = (1,0,1,0)$ and $\v=(0,1,1,0)$ \cite{BEZ20}, and neither has this property. Both characteristic directions intersect the boundary of the sector $\{x_1\geq x_2 \geq x_3 \geq x_4\}$ in two places, requiring the solution of a two point boundary value problem, which greatly complicates the solution derivation. The case of $n=5$ is more complicated, since the characteristic direction of the strategy $\v_* = (1,0,1,0,0)$ that we follow, which is optimal throughout the sector but not the only optimal strategy (see Remark~\ref{rem:additional-controls}), hits different faces of the sector boundary depending on the location, necessitating splitting the sector into regions and defining the solution differently in each region. 

Nevertheless, the overall idea from $n=2$ and $n=3$ experts---solving a one dimensional ODE in the optimal direction and resolving constants through boundary conditions---carries over to $n=4$ and $n=5$ at a high level, and is carried out in the rest of this section.  In Section \ref{sec:characteristics} we solve the equation \eqref{eq:pde} along the direction $\v_* = (1,0,1,0,0)$, which was conjectured as optimal via numerics in \cite{CDM26}. The boundary conditions on the faces $x_i=x_{i+1}$ resulting from permutation of coordinates leads to a coupled system of first order PDEs, which are derived in Section \ref{sec:traces}. The key insight in this section is that we can (more or less) explicitly solve this system of PDEs; we show how to do this in Section \ref{sec:modes}. Then the final two subsections establish the 5 expert solution formula in the respective regions. Along the way, we give a rederivation of the $n=4$ expert solution, avoiding the stochastic techniques used in \cite{BEZ20} (see Section \ref{sec:region12}).

\subsection{Solving the ODE along the \texorpdfstring{$\v_*$}{v*} characteristic}\label{sec:characteristics}

We work in the ordered sector with the scaled gaps and the correction $F$
of \eqref{eq:scaling}. For a binary control, we define $b_\v=(v_1-v_2,v_2-v_3,v_3-v_4,v_4-v_5)^T$.
Since $y_i=k(x_i-x_{i+1})$, a displacement of $x$ in the direction $\v$
moves $y$ in the direction $kb_\v$, so
$\v^T\nabla^2u\,\v=k\,b_\v^T\nabla_y^2F\,b_\v$. Writing $D_\v^2u=\v^T\nabla^2u\,\v$ for the
second derivative of $u$ in the direction $\v$, and using $k=2/k$,
\begin{equation}\label{eq:direction-scaling}
 D_\v^2u=\frac2k\,b_\v^T\nabla_y^2F b_\v.
\end{equation}
We assume that $\v_*$ attains the maximum in \eqref{eq:pde}, which
Section~\ref{sec:verification} verifies a posteriori. In the sector,
where $\phi=x_1$, the equation then reads $u-\tfrac12D_{\v_*}^2u=x_1$,
and with $u=x_1+F/k$ and $b_{\v_*}=(1,-1,1,0)$ this is the fixed equation
\begin{equation}\label{eq:fixed}
 (\partial_1-\partial_2+\partial_3)^2F=F.
\end{equation}
Since $u_{x_1}=1+F_{y_1}$, $u_{x_2}=-F_{y_1}+F_{y_2}$, $u_{x_3}=-F_{y_2}+F_{y_3}$, $u_{x_4}=-F_{y_3}+F_{y_4}$, and $u_{x_5}=-F_{y_4}$ the face conditions $u_{x_i}=u_{x_{i+1}}$ when $x_i=x_{i+1}$ (or $y_i=0$) produce
\begin{equation}\label{eq:faces}
\begin{aligned}
 2F_{y_1}-F_{y_2}&=-1 &&(y_1=0),&-F_{y_1}+2F_{y_2}-F_{y_3}&=0 &&(y_2=0),\\
 -F_{y_2}+2F_{y_3}-F_{y_4}&=0 &&(y_3=0),&-F_{y_3}+2F_{y_4}&=0 &&(y_4=0).
\end{aligned}
\end{equation}
We now change variables so one direction aligns with $\v_*$. Let $A=y_1+y_2$, $B=y_2+y_3$, $R=y_4$ and $s=y_2$. In these new coordinates write $G(A,B,R,s)=F(A-s,s,B-s,R)$ so that $(\partial_1-\partial_2+\partial_3)F=-G_s$. The sector becomes
$0\leq s\leq\min(A,B)$, and \eqref{eq:fixed} turns into $G_{ss}=G$.
For fixed $(A,B,R)$, this is a second-order linear equation in $s$ on the
interval $[0,M]$, where $M=\min(A,B)$. The solution to this two point boundary value problem is given by 
\[
 G(A,B,R,s)=\frac{\sinh(M-s)}{\sinh M}\,G(A,B,R,0)
           +\frac{\sinh s}{\sinh M}\,G(A,B,R,M)
 \qquad(0\leq s\leq M).
\]
The endpoint $s=0$ lies on the face $y_2=0$. The endpoint $s=M$ lies on
the face $y_1=0$ when $A\leq B$, that is, when $y_1\leq y_3$, and on the
face $y_3=0$ when $y_3\leq y_1$. To express the endpoint values, we
introduce the four traces
\begin{equation}\label{eq:trace-defs}
\begin{aligned}
 a(X,Y,R)&=F(X,0,X+Y,R),&
 \ell(X,Y,R)&=F(0,X,Y,R),\\
 b(X,Y,R)&=F(X+Y,0,X,R),&
 q(X,Y,R)&=F(Y,X,0,R).
\end{aligned}
\end{equation}
Here $a$ and $b$ parametrize the face $y_2=0$ on the two sides of the
diagonal $y_1=y_3$, while $\ell$ and $q$ parametrize the faces $y_1=0$ and
$y_3=0$. For $y_1\leq y_3$, set $X=y_1+y_2$ and $Y=y_3-y_1$. Then $Y\geq 0,$ so $M=\min(A,B)=A,$ so $M=A=X$
and $B=X+Y$, so the characteristic through $y$, by its construction, has endpoints
$(X,0,X+Y,R)$ at $s=0$ and $(0,X,Y,R)$ at $s=X$, where $F$ takes the
values $a(X,Y,R)$ and $\ell(X,Y,R)$. Since $M-s=y_1$ and $s=y_2$, the
interpolation formula becomes
\begin{equation}\label{eq:reconstruct-left}
 F(y)=\frac{\sinh y_1}{\sinh X}a(X,Y,R)
       +\frac{\sinh y_2}{\sinh X}\ell(X,Y,R).
\end{equation}
For $y_3\leq y_1$, set $X=y_2+y_3$ and $Y=y_1-y_3$. Then $Y\geq 0,$ so $M=\min(A,B)=B,$ so $M=B=X$ and
$A=X+Y$, so the endpoints are $(X+Y,0,X,R)$ and $(Y,X,0,R)$, where $F$
takes the values $b(X,Y,R)$ and $q(X,Y,R)$. Now $M-s=y_3$, and we obtain
\begin{equation}\label{eq:reconstruct-right}
 F(y)=\frac{\sinh y_3}{\sinh X}b(X,Y,R)
       +\frac{\sinh y_2}{\sinh X}q(X,Y,R).
\end{equation}
The quantities $a$, $b$, $\ell$, and $q$ satisfy coupled PDEs arising from the face conditions \eqref{eq:faces}, which is the subject of the next section.

\subsection{Deriving the trace systems}\label{sec:traces}

Substituting \eqref{eq:reconstruct-left} into the first two face conditions gives
\begin{equation}\label{eq:left-system}
 a_X=2\coth X\,a-2\csch X\,\ell,
 \qquad \ell_X-2\ell_Y=2\coth X\,\ell-2\csch X\,a-1.
\end{equation}
To see this, write \eqref{eq:reconstruct-left} as $F=\sigma_1a+\sigma_2\ell$ with
\[
 \sigma_1=\frac{\sinh y_1}{\sinh X},\qquad \sigma_2=\frac{\sinh y_2}{\sinh X},
\]
where $a$ and $\ell$ are evaluated at $(X,Y,R)=(y_1+y_2,\,y_3-y_1,\,y_4)$.
Since $X_{y_1}=X_{y_2}=1$, $Y_{y_1}=-1$, $Y_{y_3}=1$, and $R_{y_4}=1$,
the chain rule gives, for $g\in\{a,\ell\}$,
\[
 \partial_1g=g_X-g_Y,\qquad \partial_2g=g_X,\qquad
 \partial_3g=g_Y,\qquad \partial_4g=g_R.
\]
The weights depend only on $y_1$ and $y_2$, through the numerator and
through $X$, and
\[
\begin{aligned}
 \partial_1\sigma_1&=\frac{\cosh y_1}{\sinh X}-\coth X\,\sigma_1,&
 \partial_2\sigma_1&=-\coth X\,\sigma_1,\\
 \partial_1\sigma_2&=-\coth X\,\sigma_2,&
 \partial_2\sigma_2&=\frac{\cosh y_2}{\sinh X}-\coth X\,\sigma_2.
\end{aligned}
\]
Therefore
\[
\begin{aligned}
 F_{y_1}&=(\partial_1\sigma_1)a+(\partial_1\sigma_2)\ell
       +\sigma_1(a_X-a_Y)+\sigma_2(\ell_X-\ell_Y),\\
 F_{y_2}&=(\partial_2\sigma_1)a+(\partial_2\sigma_2)\ell+\sigma_1a_X+\sigma_2\ell_X,\\
 F_{y_3}&=\sigma_1a_Y+\sigma_2\ell_Y,\qquad F_{y_4}=\sigma_1a_R+\sigma_2\ell_R.
\end{aligned}
\]
On the face $y_2=0$ we have $X=y_1$, so $\sigma_1=1$, $\sigma_2=0$,
$\partial_1\sigma_1=\partial_1\sigma_2=0$, $\partial_2\sigma_1=-\coth X$,
and $\partial_2\sigma_2=\csch X$. Thus
\[
 F_{y_1}=a_X-a_Y,\qquad F_{y_2}=a_X-\coth X\,a+\csch X\,\ell,\qquad
 F_{y_3}=a_Y,\qquad F_{y_4}=a_R.
\]
The second face condition in \eqref{eq:faces} reads $2F_{y_2}=F_{y_1}+F_{y_3}=a_X$,
which is the first identity in \eqref{eq:left-system}.
On the face $y_1=0$ we have $X=y_2$, so $\sigma_1=0$, $\sigma_2=1$,
$\partial_2\sigma_1=\partial_2\sigma_2=0$, $\partial_1\sigma_1=\csch X$,
and $\partial_1\sigma_2=-\coth X$. Thus
\[
 F_{y_1}=\ell_X-\ell_Y-\coth X\,\ell+\csch X\,a,\qquad F_{y_2}=\ell_X,\qquad
 F_{y_3}=\ell_Y,\qquad F_{y_4}=\ell_R.
\]
The first face condition reads $2F_{y_1}-F_{y_2}=-1$, which is the second
identity in \eqref{eq:left-system}.

The same calculation applies to \eqref{eq:reconstruct-right}, where
$X=y_2+y_3$, $Y=y_1-y_3$, the weights are $\sinh y_3/\sinh X$ and
$\sinh y_2/\sinh X$, and for $g\in\{b,q\}$ the chain rule gives
$\partial_1g=g_Y$, $\partial_2g=g_X$, $\partial_3g=g_X-g_Y$, and
$\partial_4g=g_R$. On the face $y_2=0$ this yields
\[
 F_{y_1}=b_Y,\qquad F_{y_2}=b_X-\coth X\,b+\csch X\,q,\qquad
 F_{y_3}=b_X-b_Y,\qquad F_{y_4}=b_R,
\]
and on the face $y_3=0$
\[
 F_{y_1}=q_Y,\qquad F_{y_2}=q_X,\qquad
 F_{y_3}=q_X-q_Y-\coth X\,q+\csch X\,b,\qquad F_{y_4}=q_R.
\]
The second face condition, $2F_{y_2}=F_{y_1}+F_{y_3}$, and the third face condition,
$2F_{y_3}=F_{y_2}+F_{y_4}$, give
\begin{equation}\label{eq:right-system}
 b_X=2\coth X\,b-2\csch X\,q,
 \qquad q_X-2q_Y-q_R=2\coth X\,q-2\csch X\,b.
\end{equation}
The fourth face condition, $F_{y_3}=2F_{y_4}$, becomes
\begin{equation}\label{eq:bottom-traces}
 a_Y=2a_R,\quad \ell_Y=2\ell_R,\quad
 b_X-b_Y=2b_R,\quad q_X=3q_R\qquad(R=0).
\end{equation}
The first three follow directly from the face values of $F_{y_3}$ and $F_{y_4}$
displayed above. At the $q$ trace the third face says $2F_{y_3}=q_X+q_R$,
while the bottom face says $F_{y_3}=2q_R$.

Our main task for much of this section will be to solve the coupled system of PDEs \eqref{eq:left-system}. A first step in this direction is to note that both equations can be integrated. Indeed, we note that
\[\frac{d}{dX}\left(\frac{a}{\sinh^2X}\right)=\frac{a_X - 2\coth X a}{\sinh^2X} = -\frac{2\ell}{\sinh^3 X}\]
and then integrate, using boundedness at infinity, to obtain $a=\Iop \ell$, where the integral operator $\Iop$ is defined by 
\begin{equation}\label{eq:operators}
(\Iop f)(X)=2\sinh^2X\int_X^\infty\frac{f(t)}{\sinh^3t}\dd,
\end{equation}
for $X>0$ and by continuity for $X=0$. A similar argument yields $b=\Iop q$. We also define, for later usage, the operators
\begin{equation}\label{eq:kop}
\Kop=-\coth X+\csch X\,\Iop.
\end{equation}
and 
\begin{equation}\label{eq:generators}
\Lop_m=\frac1m\partial_X+\Kop
\end{equation}
for $m=2,3$. 
The remaining trace equations in \eqref{eq:left-system} and
\eqref{eq:right-system} can now be written as
\begin{equation}\label{eq:trace-Y}
 \ell_Y=\Lop_2\ell+\tfrac12,\qquad q_Y+\tfrac12q_R=\Lop_2q.
\end{equation}

\subsection{The closed slice and its single scalar quadrature}\label{sec:slice}

Before solving the trace equations in general, we first note that we can solve them on the slice $y_1=y_3$, $y_4=0$,
since the equations close on this set. We define
$\Psi(a,b)=F(a,b,a,0)$. Equations \eqref{eq:fixed}--\eqref{eq:faces} imply
\begin{equation}\label{eq:slice}
 (\partial_a-\partial_b)^2\Psi=\Psi,\qquad
 \Psi_a-\tfrac76\Psi_b=-\tfrac12\ (a=0),\qquad
 \Psi_b-\tfrac12\Psi_a=0\ (b=0).
\end{equation}
The direction $(1,-1,1,0)$ of \eqref{eq:fixed} is tangent to the slice
and corresponds to $\partial_a-\partial_b$ there, since
$\Psi_a=F_{y_1}+F_{y_3}$ and $\Psi_b=F_{y_2}$; this gives the first
equation. At $a=0$ the faces $y_1=0$, $y_3=0$, and $y_4=0$ all apply, and
\eqref{eq:faces} gives $F_{y_1}=(F_{y_2}-1)/2$, $F_{y_4}=F_{y_3}/2$, and
hence $F_{y_3}=2F_{y_2}/3$, so $\Psi_a=7\Psi_b/6-1/2$. At $b=0$ the
second condition in \eqref{eq:faces} gives
$\Psi_b=F_{y_2}=\tfrac12(F_{y_1}+F_{y_3})=\tfrac12\Psi_a$. Note that $X=a+b$ is the variable of the traces \eqref{eq:trace-defs}:
the expressions $y_1+y_2$ and $y_2+y_3$ coincide on the slice, and the
endpoint values $\Psi(X,0)=F(X,0,X,0)$ and $\Psi(0,X)=F(0,X,0,0)$ are the traces
$a=b$ and $\ell=q$ at $Y=R=0$. 

Let $h(X)=\Psi(X,0)$.
The direction $(1,-1)$ leaves $X$ invariant. Along the characteristic through
$(a,b)$, parametrized by $s=b$ with $X$ fixed, the first equation in
\eqref{eq:slice} reads $\partial_s^2\Psi(X-s,s)=\Psi(X-s,s)$, so
\[
 \Psi(X-s,s)=h(X)\cosh s+c(X)\sinh s\qquad(0\leq s\leq X)
\]
for some $c$, and differentiating in $s$ at $s=0$ gives
$c=-\Psi_a(X,0)+\Psi_b(X,0)$. Here $\Psi_a(X,0)=h'(X)$, since $h$ is the
restriction of $\Psi$ to the face $b=0$, and the boundary condition at $b=0$
gives $\Psi_b(X,0)=\tfrac12h'(X)$. Thus $c=-\tfrac12h'$, and
\begin{equation}\label{eq:slice-reconstruction}
 \Psi(a,b)=h(X)\cosh b-\tfrac12h'(X)\sinh b.
\end{equation}
This is the hyperbolic interpolation \eqref{eq:reconstruct-left} on the slice,
after the boundary condition at $b=0$ has been used to express the endpoint
value $\Psi(0,X)$ through $h$ and $h'$.
The boundary condition at $a=0$ now determines $h$. Since
\eqref{eq:slice-reconstruction} depends on $a$ only through $X$,
\[
 \Psi_a=h'\cosh b-\tfrac12h''\sinh b,\qquad
 \Psi_b=\Psi_a+h\sinh b-\tfrac12h'\cosh b.
\]
Substituting into $\Psi_a-\tfrac76\Psi_b=-\tfrac12$ at $a=0$, where $b=X$,
gives
\[
 \tfrac1{12}h''\sinh X+\tfrac5{12}h'\cosh X-\tfrac76h\sinh X=-\tfrac12,
\]
and multiplying by $12/\sinh X$ gives
\begin{equation}\label{eq:alpha-ode}
 h''+5\coth X\,h'-14h=-6\csch X.
\end{equation}
It is convenient to obtain the endpoint value $e(X)=\Psi(0,X)$ directly
from this equation; we show that it is the function \eqref{eq:e-quadrature}
of Section~\ref{sec:main}. By \eqref{eq:slice-reconstruction},
\begin{equation}\label{eq:e-def}
 e(X)=\Psi(0,X)=h(X)\cosh X-\tfrac12h'(X)\sinh X.
\end{equation}
Differentiating and eliminating $h''$ by \eqref{eq:alpha-ode} gives
\begin{equation}\label{eq:e-prime}
 e'=3h'\cosh X-6h\sinh X+3,
\end{equation}
and differentiating once more,
$e''=3h''\cosh X-3h'\sinh X-6h\cosh X$. Substituting $h''$ from
\eqref{eq:alpha-ode} into $e''+5\coth X\,e'-6e$, the terms in $h'$ and
in $h$ cancel and the constants leave
\begin{equation}\label{eq:e-ode}
 e''+5\coth X\,e'-6e=-3\coth X.
\end{equation}
Since \eqref{eq:e-def} rearranges to $h'=2\coth X\,h-2\csch X\,e$
and $h$ is bounded, the argument after \eqref{eq:operators} gives
$h=\Iop e$. These identities also give the compatibility relation
\begin{equation}\label{eq:compat}
 e'=6\Kop e+3.
\end{equation}
Indeed, \eqref{eq:e-prime} gives $e'=3h'\cosh X-6h\sinh X+3$, while
$\Kop e=h'\cosh X/2-h\sinh X$ by $h=\Iop e$ and \eqref{eq:e-def}.

Conversely, let $e$ be a solution of \eqref{eq:e-ode} with $e$ and $e'$
bounded, and put $h=\Iop e$. Then \eqref{eq:e-def} holds, since
$\Iop e$ solves $h'=2\coth X\,h-2\csch X\,e$. Moreover, the
function $\tilde h=\tfrac16\sinh X\,e'+\cosh X\,e-\tfrac12\sinh X$
satisfies
\[
 \tilde h'-2\coth X\,\tilde h+2\csch X\,e
 =\tfrac16\sinh X\bigl(e''+5\coth X\,e'-6e+3\coth X\bigr)=0,
\]
so $\tilde h-\Iop e=C\sinh^2X$ for a constant $C$. Since
$\tilde h-\Iop e=\O(e^X)$, $C=0$, and $\Iop e=\tilde h$ is
\eqref{eq:compat}. Differentiating $h'=2\coth X\,h-2\csch X\,e$
and eliminating $e'$ and $e$ by \eqref{eq:compat} and \eqref{eq:e-def}
gives \eqref{eq:alpha-ode}. Thus it suffices to solve \eqref{eq:e-ode}.

To reduce \eqref{eq:e-ode} to a first-order equation, we set
$\eta=e-e''$. Differentiating \eqref{eq:e-ode} gives
$e'''=-5\coth X\,e''+(5\csch^2X+6)e'+3\csch^2X$, so that
$\eta'=e'-e'''$ and $6\coth X\,\eta$ combine to
\[
 \eta'+6\coth X\,\eta
 =-\coth X\,(e''+5\coth X\,e'-6e)-3\csch^2X
 =3\coth^2X-3\csch^2X=3.
\]
Thus
\begin{equation}\label{eq:eta}
 \eta'+6\coth X\,\eta=3,
 \qquad \eta(X)=\frac3{\sinh^6X}\int_0^X\sinh^6t\dd.
\end{equation}
To integrate, multiply by $\sinh^6X$, whose logarithmic derivative is
$6\coth X$, and integrate from zero:
\[
 (\sinh^6X\,\eta)'=3\sinh^6X,\qquad
 \eta(X)=\frac3{\sinh^6X}\int_0^X\sinh^6t\dd+\frac{C}{\sinh^6X}.
\]
Since the trace $e$ is twice differentiable at $X=0$, $\eta=e-e''$ is
bounded there, whereas $C/\sinh^6X$ is not unless $C=0$. This gives the
displayed formula, with $\eta(X)\sim3X/7$ at zero and
$\eta(X)\to\tfrac12$ at infinity.
Since $e''=e-\eta$, equation \eqref{eq:e-ode} now gives
$e'=\tanh X(e+\eta/5)-3/5$. With the primitive $I$ of
\eqref{eq:I-primitive}, integrating
$(\sinh^5X\cosh X)'=5\sinh^4X+6\sinh^6X$ from zero and using
$I=\int_0^X\sinh^4t\dd+\int_0^X\sinh^6t\dd$ give
\[
 \int_0^X\sinh^6t\dd=\sinh^5X\cosh X-5I(X),\qquad
 \eta=3\coth X-\frac{15I(X)}{\sinh^6X}.
\]
Substituting this into $e'=\tanh X(e+\eta/5)-3/5$, the terms
$\tfrac35$ cancel and the trace equation becomes the first-order equation
\eqref{eq:e-first}. With the integrating factor $\sech X$ it reads
$(\sech X\,e)'=-3I(X)/(\cosh^2X\sinh^5X)$, and since $\sech X\,e\to0$
at infinity for bounded $e$, integration from $X$ to infinity yields the
quadrature \eqref{eq:e-quadrature}. The integrand there is regular at zero
since $I(t)=t^5/5+\O(t^7)$, and decays exponentially at infinity. Thus $e$
is positive and bounded. Equation \eqref{eq:e-first} gives
\begin{equation}\label{eq:e-endpoints}
 e'(0)=-\tfrac35,\qquad e''(0)=e(0),\qquad
 e(X)\longrightarrow\tfrac12\quad(X\to\infty).
\end{equation}
Indeed, since $\sinh^4t\cosh^2t=t^4+\tfrac53t^6+\O(t^8)$, we have
$I(X)=\tfrac15X^5+\tfrac5{21}X^7+\O(X^9)$ and
$\cosh X\sinh^5X=X^5+\tfrac43X^7+\O(X^9)$, so the forcing term in
\eqref{eq:e-first} is
\[
 \frac{3I(X)}{\cosh X\sinh^5X}=\tfrac35-\tfrac3{35}X^2+\O(X^4).
\]
As $\tanh X\,e\to0$ at zero, \eqref{eq:e-first} gives $e'(0)=-\tfrac35$.
The identity $e''(0)=e(0)$ is $\eta(0)=0$, which is immediate from
\eqref{eq:eta}; it also follows by differentiating \eqref{eq:e-first},
since the forcing term is even. At infinity, $I(t)\sim e^{6t}/384$ and
$\cosh^2t\sinh^5t\sim e^{7t}/128$, so the integrand in
\eqref{eq:e-quadrature} is asymptotic to $e^{-t}/3$, and
$e(X)\sim3\cdot\tfrac12e^{X}\cdot\tfrac13e^{-X}=\tfrac12$.
The forcing term in \eqref{eq:e-first} has a removable singularity at the
origin and extends analytically there. Thus the trace is analytic at the origin. At infinity, substituting a series in
$e^{-X}$ into \eqref{eq:e-first} gives
\begin{equation}\label{eq:e-asymptotic}
 e(X)=\tfrac12+\tfrac12e^{-2X}-\tfrac25e^{-4X}+\O(Xe^{-6X}).
\end{equation}
Indeed, the forcing term $3I/(\cosh X\sinh^5X)$ is a rational function
of $e^{-2X}$ apart from the term $X/16$ of $I$, which first contributes at
order $Xe^{-6X}$, and the coefficients of $e^{-2X}$ and $e^{-4X}$ are
determined by matching; the remainder satisfies a linear equation with
forcing $\O(Xe^{-6X})$, whose bounded solution is $\O(Xe^{-6X})$.
Finally we evaluate $e(0)$ in closed form.
\begin{lemma}\label{lem:e0}
$e(0)=45\pi^2/512$; hence $u(0)=e(0)/\sqrt2=45\pi^2/(512\sqrt2)$, the
constant term in \eqref{eq:origin-expansion}.
\end{lemma}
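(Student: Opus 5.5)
The plan is to evaluate the quadrature \eqref{eq:e-quadrature} at $X=0$ directly:
\[
 e(0)=3\int_0^\infty\frac{I(t)}{\cosh^2t\sinh^5t}\dd .
\]
The second assertion then follows at once. At the origin all gaps vanish, so $y=0$, and \eqref{eq:scaling} gives $u(0)=F(0)/k$. The origin lies on the slice $y_1=y_3$, $y_4=0$, where $F(0)=\Psi(0,0)=e(0)$ by \eqref{eq:e-def} (continuity at $X=0$). Hence $u(0)=e(0)/\sqrt2$, and the work is entirely in computing the integral.

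\emph{Step 1: separate the integrand.} Substitute the closed form \eqref{eq:I-primitive}. This splits $I(t)$ into an exponential-polynomial part $I_{\exp}(t)=\frac{\sinh 6t}{192}-\frac{\sinh4t}{64}-\frac{\sinh2t}{64}$ and the linear part $t/16$. With $q=e^{-2t}$, the weight $1/(\cosh^2t\sinh^5t)$ equals $2^7e^{-7t}/\bigl((1+q)^2(1-q)^5\bigr)$. So $I_{\exp}(t)\,w(t)$ is $e^{-t}$ times a rational function of $q$, and $\tfrac t{16}w(t)$ is $t\,e^{-7t}$ times a rational function of $q$.

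Neither piece is integrable at $t=0$ on its own, because of the $t^{-5}$ and $t^{-4}$ singularities; only their sum is, since $I(t)=t^5/5+\O(t^7)$. I will therefore integrate over $[\epsilon,\infty)$ and let $\epsilon\to0$ at the end.

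\emph{Step 2: integrate each piece.}
\begin{itemize}
\item For the exponential part, change variables to $s=e^{-t}$. This gives the integral of a rational function of $s$ over $(0,e^{-\epsilon})$. A partial fraction decomposition produces rational terms, logarithms at $s=1$, and arctangent terms from the $(1+s^2)$ factors; the arctangents give $\pi$ but not $\pi^2$.
\item For the $t$-part, expand the rational function of $q$ in partial fractions in $(1-q)^{-j}$ and $(1+q)^{-j}$. Then integrate the terms $t\,e^{-mt}(1\pm q)^{-j}$.
\item The $(1+q)^{-1}$ and $(1+q)^{-2}$ terms give, via $\int_0^\infty t\,e^{-2nt}\dd=1/(4n^2)$ and alternating sums, the constant $\sum(-1)^{n}/n^2=-\pi^2/12$. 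This should be the source of the $\pi^2$.
\item Equivalently, one can integrate by parts to reach $\int_0^\infty\lambda(t)\dd=\pi^2/4$ and related Laplace-type integrals.
\end{itemize}
The $(1-q)^{-j}$ terms produce the singular behaviour at $\epsilon\to0$, together with $\zeta(2)$ and $\log$ terms.

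\emph{Step 3: assemble and cancel.} Collect the $\epsilon$-expansions of the two pieces. I need to check that the poles $\epsilon^{-4},\dots,\epsilon^{-1}$, the $\log\epsilon$ terms, all $\log 2$ terms and the odd powers of $\pi$ cancel. What should remain is $15\pi^2/512$, giving $e(0)=3\cdot 15\pi^2/512$. This bookkeeping is the main obstacle: many individually divergent terms must cancel exactly.

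To control it, I would first try to reorganize the computation to avoid divergences altogether. Apply Fubini, $\int_0^\infty I(t)w(t)\dd=\int_0^\infty \sinh^4s\cosh^2s\,W(s)\,ds$ with $W(s)=\int_s^\infty w$. $W$ is elementary: hyperbolic terms plus a multiple of $\lambda(s)=\log\coth(s/2)$. The product $\sinh^4s\cosh^2s\,W(s)$ is integrable termwise. After integration by parts, everything reduces to $\int_0^\infty\lambda=\pi^2/4$ plus rational constants, and the rational parts must sum to zero. As an independent sanity check on the constant, I would use the analyticity at $0$ and the values $e'(0)=-\tfrac35$ and $e''(0)=e(0)$ from \eqref{eq:e-endpoints}, together with a numerical evaluation.
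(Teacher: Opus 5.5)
Your fallback route is the same as the paper's proof: Fubini to $\int_0^\infty\sinh^4s\cosh^2s\,W(s)\,ds$, an elementary $W$ containing a multiple of $\lambda$, an integration by parts against $I'=\sinh^4s\cosh^2s$, and $\int_0^\infty s\,ds/\sinh s=\int_0^\infty\lambda=\pi^2/4$. The reduction $u(0)=F(0)/k=e(0)/\sqrt2$ is also fine. But as written the proposal does not prove the lemma: $15\pi^2/512$ is only what ``should remain'', and the claim you rely on to tame the bookkeeping is false.

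The product $\sinh^4s\cosh^2s\,W(s)$ is \emph{not} integrable termwise. With $W=\tfrac14\csch^3s\coth s-\tfrac78\csch s\coth s+\tfrac{15}8\lambda(s)-\sech s$, its pieces are $\tfrac14\cosh^3s$, $-\tfrac78\sinh^2s\cosh^3s$, $\tfrac{15}8\lambda(s)\sinh^4s\cosh^2s$ and $-\sinh^4s\cosh s$. All of them grow exponentially, so Fubini moves the divergence from $t=0$ to $s=\infty$ instead of removing it. Likewise $\int_0^T\lambda I'\,ds=\lambda(T)I(T)+\int_0^T\csch s\,I(s)\,ds$ has a boundary term of size $e^{5T}/192$.

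The missing step is the following. Work on $[0,T]$ and use $\csch s\,I(s)=\tfrac1{96}\cosh5s-\tfrac1{48}\cosh3s-\tfrac5{96}\cosh s+\tfrac{s}{16\sinh s}$. Collect everything except the last term into an elementary $E(T)$. Then check, by expanding in $e^{-T}$ with $\lambda(T)=2\sum_{n\ \mathrm{odd}}e^{-nT}/n$, that the coefficients of $e^{5T}$, $e^{3T}$ and $e^{T}$ vanish. At order $e^{5T}$, for instance, $-\tfrac38\sinh^5T$ cancels $\tfrac{15}8\bigl(\lambda(T)I(T)+\tfrac1{480}\sinh5T\bigr)$. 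This gives $E(T)\to0$, and only then does $J=\tfrac{15}8\cdot\tfrac1{16}\int_0^\infty s\,ds/\sinh s=15\pi^2/512$ follow.

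The constant is fixed by the coefficient $\tfrac{15}8$ of $\lambda$ in $W$ and by the term $t/16$ of $I$, and you compute neither. Your remark that ``the rational parts must sum to zero'' is exactly the statement $E(T)\to0$, which has to be verified rather than assumed. A numerical evaluation cannot replace it, and the relations $e'(0)=-\tfrac35$, $e''(0)=e(0)$ do not determine $e(0)$.

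Your first route also misidentifies where $\pi^2$ comes from. Since $w=2^7e^{-7t}(1+q)^{-2}(1-q)^{-5}$ with $q=e^{-2t}$, only exponentials $e^{-mt}$ with $m$ odd occur. So the $(1+q)^{-j}$ terms of $\tfrac t{16}w$ produce alternating sums over odd $m$: Catalan's constant $\sum_{n\ge0}(-1)^n(2n+1)^{-2}$, multiples of $\pi$, and rationals, not $\sum(-1)^n/n^2$. The $\pi^2$ comes instead from the $(1-q)^{-j}$ terms through $\sum_{m\ \mathrm{odd}}m^{-2}=\pi^2/8$, as in the paper's $\int_0^\infty s/\sinh s$.

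Catalan's constant does cancel. Write $w=2^7s\,Q(s^2)$ with $s=e^{-t}$ and $Q(x)=x^3(1+x)^{-2}(1-x)^{-5}=\alpha_1(1+x)^{-1}+\alpha_2(1+x)^{-2}+\cdots$; its coefficient is proportional to $\alpha_1+\tfrac12\alpha_2=\tfrac1{64}-\tfrac1{64}=0$. But it is missing from your list of constants to track, so that bookkeeping plan is also incomplete.
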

\begin{proof}
By \eqref{eq:e-quadrature}, $e(0)=3J$ with
$J=\int_0^\infty I(t)w(t)\dd$ and $w=\sech^2t\,\csch^5t$. Since
$I(t)=\int_0^t\sinh^4s\cosh^2s\,ds$ by \eqref{eq:I-primitive} and all
integrands are positive, Fubini's theorem gives
\[
 J=\int_0^\infty\sinh^4s\cosh^2s\,V(s)\,ds,\qquad
 V(s)=\int_s^\infty w(t)\dd.
\]
Writing $1=\cosh^2t-\sinh^2t$ three times,
$w=\csch^5t-\csch^3t+\csch t-\sinh t\,\sech^2t$, and the standard
antiderivatives, with $\lambda'=-\csch$, give
\[
 V(s)=\tfrac14\csch^3s\coth s-\tfrac78\csch s\coth s
      +\tfrac{15}8\lambda(s)-\sech s,
\]
each term vanishing at infinity. Hence
\[
 \sinh^4s\cosh^2s\,V(s)=\tfrac14\cosh^3s-\tfrac78\sinh^2s\cosh^3s
 -\sinh^4s\cosh s+\tfrac{15}8\lambda(s)\sinh^4s\cosh^2s.
\]
We integrate over $[0,T]$ and let $T\to\infty$. The first three terms are
elementary. In the fourth, $\sinh^4s\cosh^2s=I'(s)$ and $\lambda'=-\csch$,
so integrating by parts, with $\lambda(s)I(s)\to0$ at $s=0$,
\[
 \int_0^T\lambda(s)I'(s)\,ds=\lambda(T)I(T)+\int_0^T\csch s\,I(s)\,ds,
\]
and since $\sinh2ks/\sinh s=2\sum_{j=1}^k\cosh(2j-1)s$,
\[
 \csch s\,I(s)=\tfrac1{96}\cosh5s-\tfrac1{48}\cosh3s-\tfrac5{96}\cosh s
 +\frac{s}{16\sinh s}.
\]
Collecting the elementary terms,
\[
\begin{aligned}
 \int_0^T\sinh^4s\cosh^2s\,V(s)\,ds
 ={}&E(T)+\frac{15}{128}\int_0^T\frac{s\,ds}{\sinh s},\\
 E(T)={}&\tfrac14\bigl(\sinh T+\tfrac13\sinh^3T\bigr)
   -\tfrac78\bigl(\tfrac13\sinh^3T+\tfrac15\sinh^5T\bigr)
   -\tfrac15\sinh^5T\\
 &+\tfrac{15}8\Bigl(\lambda(T)I(T)+\tfrac1{480}\sinh5T
   -\tfrac1{144}\sinh3T-\tfrac5{96}\sinh T\Bigr).
\end{aligned}
\]
Expanding in $e^{-T}$, with $\lambda(T)=2\sum_{n\ \mathrm{odd}}e^{-nT}/n$,
the growing exponentials cancel and
$E(T)=\bigl(\tfrac{15}{64}T-\tfrac{23}{448}\bigr)e^{-T}+\O(Te^{-3T})\to0$.
Therefore
\[
 J=\frac{15}{128}\int_0^\infty\frac{s\,ds}{\sinh s}
  =\frac{15}{128}\cdot2\sum_{k\geq0}\frac1{(2k+1)^2}
  =\frac{15}{128}\cdot\frac{\pi^2}4=\frac{15\pi^2}{512},
\]
and $e(0)=3J=45\pi^2/512$.
\end{proof}

\subsection{Truncated modes and the solution operators}\label{sec:modes}

We now proceed to solving the coupled trace equations \eqref{eq:left-system} and \eqref{eq:right-system}. We will proceed in generality for the moment.  Let $E(X,Y)$ denote a lower trace, $\ell$ or $q$, and let
$A(X,Y)=\Iop E$ be the corresponding upper trace, $a$ or $b$. For $m=2,3$
consider
\begin{equation}\label{eq:m-system}
 A_X=2\coth X\,A-2\csch X\,E,\qquad
 E_X-mE_Y=m\coth X\,E-m\csch X\,A.
\end{equation}
With $m=2$ this is the common homogeneous form of \eqref{eq:left-system}
and \eqref{eq:right-system}: the constant $-1$ in \eqref{eq:left-system}
is removed by subtracting a stationary solution, and the term $q_R$ in
\eqref{eq:right-system} is absorbed by a characteristic variable, as done
in Section~\ref{sec:region12}. On the bottom face $R=0$ of Region III, where
$q_X=3q_R$ by \eqref{eq:bottom-traces}, equation \eqref{eq:right-system}
becomes the case $m=3$. Since $A=\Iop E$ solves the first equation, the
second reads $E_Y=\frac1mE_X+\Kop E=\Lop_mE$ with $\Lop_m$ from
\eqref{eq:generators}. The problem is therefore: given $E(\cdot,0)=f$, find
$E(\cdot,Y)$. We write $\Sop_m(Y)f=E(\cdot,Y)$ for the solution operator,
or propagator, of this Cauchy problem, and
$\Top_m(Y)f=\Iop\Sop_m(Y)f=A(\cdot,Y)$ for its upper-trace companion. We show these exist and give formulas for them below. 

As a first step, it turns out we can easily solve the Cauchy problem with initial data given by $f =\Phi_t$---the mode defined in \eqref{eq:phi}.
\begin{proposition}\label{prop:opmodes}
Let $t>0$. The operators \eqref{eq:operators} act on a mode by
\begin{equation}\label{eq:mode-eigen}
 \Iop\Phi_t=\Phi_t^2,\qquad \Kop\Phi_t=-\coth t\,\Phi_t.
\end{equation}
Moreover, for $m=2,3$ the Cauchy problem \eqref{eq:m-system} with data
$E(\cdot,0)=\Phi_t$ is solved by products of truncated modes: with
$L=X+Y/m$ and $d=Y/m$, as functions of $X$,
\begin{equation}\label{eq:mode-transfer}
 \Sop_m(Y)\Phi_t=\Phi_t(L)\Phi_t(d)^m,\qquad
 \Top_m(Y)\Phi_t=\Phi_t(L)^2\Phi_t(d)^{m-1}.
\end{equation}
Both vanish for $t\leq L$, and at $Y=0$ they reduce to $\Phi_t$ and
$\Phi_t^2=\Iop\Phi_t$.
\end{proposition}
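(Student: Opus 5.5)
The plan is to verify everything by direct computation. The one identity that drives all of it is
\[
 \frac{d}{ds}\,\frac{\sinh(t-s)}{\sinh s}=-\frac{\sinh t}{\sinh^2 s},
\]
which follows from the addition formula $\sinh(t-s)\cosh s+\cosh(t-s)\sinh s=\sinh t$.

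\emph{Step 1: the action of $\Iop$ on a mode.} From the identity above,
\[
 \frac{d}{ds}\Bigl(\frac{\sinh(t-s)}{\sinh s}\Bigr)^2=-\frac{2\sinh t\,\sinh(t-s)}{\sinh^3s}.
\]
For $X<t$ the integral in \eqref{eq:operators} is supported on $[X,t]$, since $\Phi_t$ vanishes beyond $t$. It therefore equals
\[
 \frac{1}{\sinh t}\int_X^t\frac{\sinh(t-s)}{\sinh^3 s}\,ds=\frac{\sinh^2(t-X)}{2\sinh^2t\,\sinh^2X}.
\]
Multiplying by $2\sinh^2X$ gives $\Phi_t(X)^2$. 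For $X\ge t$ the integral vanishes. Hence $\Iop\Phi_t=\Phi_t^2$.

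\emph{Step 2: the action of $\Kop$ on a mode.} We have $\Kop\Phi_t=\Phi_t\bigl(-\coth X+\csch X\,\Phi_t\bigr)$. Expanding $\sinh(t-X)$ gives $\frac{\sinh(t-X)}{\sinh t\,\sinh X}=\coth X-\coth t$. So $\Kop\Phi_t=-\coth t\,\Phi_t$ for $X<t$, and trivially for $X\ge t$.

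\emph{Step 3: the mode-transfer formulas.} I would not compute $\Kop$ on shifted modes. Instead I would set $E=\Phi_t(L)\Phi_t(d)^m$ and $A=\Phi_t(L)^2\Phi_t(d)^{m-1}$, and check the two equations of \eqref{eq:m-system} directly in the region $L<t$. Then $d<t$ automatically, since $d\le L$.

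For the first equation, use $\partial_X\Phi_t(L)=-\cosh(t-L)/\sinh t$ and divide by the common factor $2\Phi_t(L)\Phi_t(d)^{m-1}/\sinh t$. It reduces to
\[
 \coth X\sinh(t-L)-\csch X\sinh(t-d)=-\cosh(t-L).
\]
This follows from $\sinh(t-d)=\sinh((t-L)+X)$ expanded by the addition formula.

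The first equation fixes $A$ only up to $C\sinh^2X$. Since $A$ is bounded (indeed it vanishes for $X\ge t-d$), we get $A=\Iop E$, i.e.\ $\Top_m(Y)\Phi_t=\Iop\Sop_m(Y)\Phi_t$.

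For the second equation I would compute $E_X-mE_Y$. The $Y$-derivative hits both $\Phi_t(L)$ (through $L_Y=1/m$) and $\Phi_t(d)^m$ (through $d_Y=1/m$). The $\partial_L$ contributions cancel, leaving
\[
 E_X-mE_Y=m\,\Phi_t(L)\Phi_t(d)^{m-1}\,\frac{\cosh(t-d)}{\sinh t}.
\]
The right side of \eqref{eq:m-system} is $m\Phi_t(L)\Phi_t(d)^{m-1}\bigl[\coth X\,\Phi_t(d)-\csch X\,\Phi_t(L)\bigr]$. So the second equation reduces to
\[
 \coth X\sinh(t-d)-\csch X\sinh(t-L)=\cosh(t-d).
\]
This is again the addition formula, now with $t-L=(t-d)-X$.

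\emph{Step 4: the truncated region and initial values.} For $L\ge t$ both expressions vanish identically and the equations hold trivially. Across the interface $L=t$ the functions are continuous, with $E$ vanishing linearly and $A$ quadratically. The interface is the characteristic line $X+Y/m=t$ of the transport operator $\partial_X-m\partial_Y$, so $E$ solves the Cauchy problem in the natural sense along characteristics. At $Y=0$ we have $d=0$ and $\Phi_t(0)=1$, which gives $\Phi_t$ and $\Phi_t^2$.

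I expect the only real obstacle to be the bookkeeping in the second equation of Step 3: the sign conventions in $\partial_Y$ acting on both factors, and confirming the cancellation of the $\partial_L$ terms. Beyond that, one must justify in what sense the Cauchy problem is solved across the kink at $L=t$. I would handle that by noting the kink lies on a characteristic, so the weak and characteristic formulations agree there.
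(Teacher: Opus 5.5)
Your proposal is correct and follows essentially the same route as the paper. The same derivative identity gives $\Iop\Phi_t=\Phi_t^2$ and $\Kop\Phi_t=-\coth t\,\Phi_t$, and the transfer formulas are checked directly through the same two addition-formula identities; the paper writes these with $N_t(a)=\cosh(t-a)/\sinh t$ in the variables $(L,d)$, where $\partial_X-m\partial_Y=-\partial_d$. As in your plan, it gets $A=\Iop E$ from boundedness and treats the front $L=t$ as a characteristic across which $E$ is only Lipschitz.
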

\begin{proof}
For $X\geq t$ both sides of both identities vanish, since $\Phi_t(s)=0$
for $s\geq t$. For $0<s<t$, differentiating the quotient
$\Phi_t(s)^2/(2\sinh^2s)=\sinh^2(t-s)/(2\sinh^2t\,\sinh^2s)$ and using
the addition formula $-\cosh(t-s)\sinh s-\sinh(t-s)\cosh s=-\sinh t$
gives
\[
 \frac{d}{ds}\,\frac{\Phi_t(s)^2}{2\sinh^2s}=-\frac{\Phi_t(s)}{\sinh^3s}.
\]
Since $\Phi_t(t)=0$, for $0<X<t$ this yields
\[
 \int_X^\infty\frac{\Phi_t(s)}{\sinh^3s}\,ds
 =\int_X^t\frac{\Phi_t(s)}{\sinh^3s}\,ds
 =\frac{\Phi_t(X)^2}{2\sinh^2X},
\]
and multiplying by $2\sinh^2X$ gives the first identity. Equivalently,
$\Phi_t^2$ is the bounded solution of the first trace equation
$g'=2\coth X\,g-2\csch X\,\Phi_t$. For the second identity,
\[
 \Kop\Phi_t=-\coth X\,\Phi_t+\csch X\,\Phi_t^2
 =\Phi_t\,\frac{\sinh(t-X)-\cosh X\sinh t}{\sinh X\,\sinh t}
 =-\coth t\,\Phi_t,
\]
since $\sinh(t-X)=\sinh t\cosh X-\cosh t\sinh X$ reduces the numerator
to $-\cosh t\sinh X$.
We now verify that $E=\Phi_t(L)\Phi_t(d)^m$ and
$A=\Phi_t(L)^2\Phi_t(d)^{m-1}$ satisfy \eqref{eq:m-system}. Put
$N_t(a)=\coth t\cosh a-\sinh a=\cosh(t-a)/\sinh t$, so that
$\Phi_t'(a)=-N_t(a)$ for $0<a<t$. Writing $t-d=(t-L)+X$ and
$t-L=(t-d)-X$ in the definition of $\Phi_t$, the addition formula for
$\sinh$ gives
\[
 \Phi_t(d)=\cosh X\,\Phi_t(L)+\sinh X\,N_t(L),\qquad
 \Phi_t(L)=\cosh X\,\Phi_t(d)-\sinh X\,N_t(d).
\]
Let $X>0$ and $L<t$. In the variables $(L,d)$ we have $X=L-d$,
$\partial_X=\partial_L$ and $\partial_Y=(\partial_L+\partial_d)/m$, so
$\partial_X-m\partial_Y=-\partial_d$. The two sides of the first equation
of \eqref{eq:m-system} are
\[
\begin{aligned}
 \partial_LA&=-2\Phi_t(L)N_t(L)\Phi_t(d)^{m-1},\\
 2\coth X\,A-2\csch X\,E
 &=\frac{2\Phi_t(L)\Phi_t(d)^{m-1}}{\sinh X}
   \bigl[\cosh X\,\Phi_t(L)-\Phi_t(d)\bigr],
\end{aligned}
\]
and the bracket equals $-\sinh X\,N_t(L)$ by the first addition
formula. The two sides of the second equation are
\[
\begin{aligned}
 -\partial_dE&=m\Phi_t(L)\Phi_t(d)^{m-1}N_t(d),\\
 m\coth X\,E-m\csch X\,A
 &=\frac{m\Phi_t(L)\Phi_t(d)^{m-1}}{\sinh X}
   \bigl[\cosh X\,\Phi_t(d)-\Phi_t(L)\bigr],
\end{aligned}
\]
and the bracket equals $\sinh X\,N_t(d)$ by the second addition formula.
For $L>t$ both $E$ and $A$ vanish identically, and since $\Phi_t(t)=0$
all four expressions above are continuous across $L=t$. The function $E$
itself is only Lipschitz across the front $L=t$: its derivative
$\partial_LE=\Phi_t'(L)\Phi_t(d)^m$ jumps from $-\Phi_t(d)^m/\sinh t$ to
$0$ there, while the combination $E_X-mE_Y=-\partial_dE$ is continuous.
Thus $E$ solves \eqref{eq:m-system} classically on either side of the
characteristic $L=t$ and as a Lipschitz solution across it; the
superpositions of modes with smooth data used below are classical
solutions. At $Y=0$ we have
$L=X$ and $d=0$, and $\Phi_t(0)=1$ gives $E=\Phi_t$ and
$A=\Phi_t^2=\Iop\Phi_t$. Finally $A$ is bounded, and the homogeneous
solutions of the first equation are the multiples of $\sinh^2X$, so the
first equation gives $A=\Iop E$ for every $Y$, as required of the upper
trace.
\end{proof}

Now, we recall that the truncated mode $\Phi_t$ of \eqref{eq:phi} is the
Green's function of $\partial_X^2-1$ with pole at $t$ that vanishes
for $X>t$, normalized by $\Phi_t(0)=1$.
By the Green's representation formula we have
\begin{equation}\label{eq:interpolation}
 f(X)=\int_X^\infty\sinh(t-X)\,(f''-f)(t)\dd
     =\int_0^\infty\sinh t\,(f''-f)(t)\,\Phi_t(X)\dd,
\end{equation}
for any smooth compactly supported $f$. 
Applying \eqref{eq:mode-transfer} under the integral in
\eqref{eq:interpolation} yields
\[
 \Sop_m(Y)f=\int_L^\infty\sinh t\,(f''-f)(t)\,\Phi_t(L)\Phi_t(d)^m\dd.
\]
The integral starts at $L$ because $\Phi_t(L)=0$ for $t<L$. Integrating
by parts twice moves the derivatives onto the kernel, and the resulting
expression involves $f$ alone, so it extends the solution operators to
bounded data. The next lemma records the resulting Green's function of the
Cauchy problem for a general product of modes; the same Green's function
produces the Region III formula in Section~\ref{sec:region3}.

\begin{lemma}\label{lem:universal-kernel}
Let $L>0$, let $a_1,\dots,a_N\in[0,L]$ with at least one $a_i$ equal to
$L$, and set $G(t)=\sinh t\prod_{i=1}^N\Phi_t(a_i)$.
\begin{enumerate}
\item[(i)] For $t>L$,
\begin{equation}\label{eq:kernel-density}
 G''-G=\frac2{\sinh^3t}\sum_{i<j}\sinh a_i\sinh a_j
       \prod_{h\notin\{i,j\}}\Phi_t(a_h),
\end{equation}
which is nonnegative and at most $N^2\sinh^2L/\sinh^3t$. Moreover
$G'(L)=\prod_{h\neq i}\Phi_L(a_h)$ if $a_i$ is the only entry equal to
$L$, and $G'(L)=0$ if two or more entries equal $L$.
\item[(ii)] For every smooth $f$ with compact support in $[0,\infty)$,
\begin{equation}\label{eq:universal-kernel}
 \mathcal P_{a_1,\dots,a_N}f
 =\int_L^\infty G(t)(f''-f)(t)\dd
 =G'(L)f(L)+\int_L^\infty (G''(t)-G(t))f(t)\dd,
\end{equation}
and the right side defines $\mathcal P_{a_1,\dots,a_N}f$ for every
bounded continuous $f$.
\item[(iii)] Let $L=X+Y/m$ and $d=Y/m$, and define
\begin{equation}\label{eq:transfer-definition}
 \Sop_m(Y)f=\mathcal P_{L,d,\dots,d}f,\qquad
 \Top_m(Y)f=\mathcal P_{L,L,d,\dots,d}f,
\end{equation}
with $m$ copies of $d$ in the first list and $m-1$ in the second. For
every bounded smooth $f$ on $[0,\infty)$, the functions $E=\Sop_m(Y)f$
and $A=\Top_m(Y)f$ solve \eqref{eq:m-system} on $X>0$, $Y\geq0$, and
\begin{equation}\label{eq:transfer-initial}
 \Sop_m(0)f=f,\qquad \Top_m(0)f=\Iop f,
 \qquad \Top_m(Y)=\Iop\Sop_m(Y).
\end{equation}
In particular $\partial_Y\Sop_m(Y)f=\Lop_m\Sop_m(Y)f$.
\end{enumerate}
\end{lemma}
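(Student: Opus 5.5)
For part (i) I would compute directly on $t>L$, where every factor $\Phi_t(a_i)=\sinh(t-a_i)/\sinh t$ is smooth. The natural route is to write
\[
G(t)=\sinh^{1-N}t\prod_i\sinh(t-a_i).
\]
Setting $g_i=\sinh(t-a_i)$, each factor satisfies $g_i''=g_i$, and $s=\sinh t$ also satisfies $s''=s$. Then I would expand $G''$ by the product rule. The diagonal second-derivative terms combine with $G$. The cross terms $g_i'g_j'$ pair with $g_ig_j$ through the identity
\[
\cosh(t-a_i)\cosh(t-a_j)\sinh^2t-\cdots,
\]
using $\sinh(t-a_i)\cosh t-\cosh(t-a_i)\sinh t=-\sinh a_i$.

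A cleaner bookkeeping is to use logarithmic derivatives. Let
\[
\rho_i=(\log\Phi_t(a_i))'=\coth(t-a_i)-\coth t=\frac{\sinh a_i}{\sinh t\,\sinh(t-a_i)}.
\]
Then
\[
G''/G=(\coth t+\textstyle\sum\rho_i)^2+(\coth t)'+\sum\rho_i'.
\]
One checks that $\rho_i'=-\rho_i^2-2\coth t\,\rho_i$. Indeed, $\rho_i$ is a difference of two functions $c$ satisfying $c'=1-c^2$, so
\[
\rho'=-(c_1^2-c_2^2)=-\rho(\rho+2\coth t).
\]
Everything then cancels except $2\sum_{i<j}\rho_i\rho_j$. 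Multiplying by $G$ gives \eqref{eq:kernel-density}, which is manifestly nonnegative. The bound follows from $0\le\Phi_t\le1$ and $\sinh a_i\le\sinh L$, with $\binom N2\cdot2\le N^2$. For $G'(L)$, I differentiate at $t=L^+$. Each factor with $a_i=L$ vanishes to first order with derivative $1/\sinh L$, and $\sinh L$ cancels it. This gives the stated value for one such factor and $0$ for two or more.

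For (ii), on $[L,\infty)$ I integrate by parts twice against compactly supported $f$. The boundary terms at $L$ are $-G(L)f'(L)+G'(L)f(L)$, and $G(L)=0$ since some $a_i=L$. The first equality is the definition, which matches the mode superposition through \eqref{eq:interpolation}. The right side converges absolutely for bounded continuous $f$ by the $\sinh^{-3}t$ bound, so it serves as the definition in that case.

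For (iii), the initial identities at $Y=0$ are immediate. There $d=0$ gives $\Phi_t(0)=1$, so $\Sop_m(0)f=\int_X^\infty\sinh t\,(f''-f)\Phi_t(X)\,dt=f$ by \eqref{eq:interpolation}. Likewise $\Top_m(0)f$ is the superposition of $\Phi_t^2=\Iop\Phi_t$, giving $\Iop f$ by linearity and Fubini. For compactly supported smooth $f$, I superpose Proposition~\ref{prop:opmodes} in the form $E=\int\sinh t(f''-f)\,\Sop_m(Y)\Phi_t\,dt$. The integrand is a Lipschitz solution, continuous across the front $t=L$, so differentiation under the integral is justified; the front contributes no boundary term because the integrand vanishes at $t=L$. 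This shows $E$ and $A$ solve \eqref{eq:m-system} with $A=\Iop E$.

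I expect the main obstacle to be the extension to bounded smooth $f$. My first attempt would be to approximate $f$ by $f\chi_R$ with cutoffs. The kernel decay $\sinh^{-3}t$ gives locally uniform convergence of $E$, $A$ and of their $X$, $Y$ derivatives. Those derivatives can be expressed through $t$-derivatives of the explicit kernel, which carry the same decay. Hence the equations pass to the limit. The identity $\Top_m=\Iop\Sop_m$ persists because $\Iop$ is continuous under bounded convergence and $A$ stays bounded. Finally, $\partial_Y\Sop_m f=\Lop_m\Sop_m f$ is the second equation of \eqref{eq:m-system} rewritten with $A=\Iop E$.
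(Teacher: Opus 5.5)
Your proposal is correct and follows the paper's proof in all essentials: integration by parts for (ii), and for (iii) superposition of the modes of Proposition~\ref{prop:opmodes} with differentiation under the integral, followed by a cutoff argument that uses the $\sinh^{-3}t$ decay of the kernel. The differences are only in bookkeeping. Your logarithmic-derivative (Riccati) computation in (i) replaces the paper's direct expansion of $P'$ and $P''$ via $\partial_t\Phi_t(a)=\sinh a/\sinh^2t$. You also obtain the $Y=0$ identities and $\Top_m=\Iop\Sop_m$ by superposition and then the limit $f\chi_R\to f$; you should state this limit explicitly for $f$ without compact support, since the interpolation formula does not apply there directly. The paper instead reads these identities off the kernel (vanishing pair sums, unit atom, one surviving pair giving the kernel of $\Iop$) and integrates the first equation of \eqref{eq:m-system} from infinity.
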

\begin{proof}
For $0\leq a\leq t$, differentiating
$\Phi_t(a)=\sinh(t-a)/\sinh t$ in $t$ and using the addition formula gives
\[
 \partial_t\Phi_t(a)
 =\frac{\cosh(t-a)\sinh t-\sinh(t-a)\cosh t}{\sinh^2t}
 =\frac{\sinh a}{\sinh^2t}.
\]
Write $P=\prod_i\Phi_t(a_i)$, so that $G=\sinh t\,P$,
$G'=\cosh t\,P+\sinh t\,P'$ and $G''-G=2\cosh t\,P'+\sinh t\,P''$. By
the product rule and $\partial_t\csch^2t=-2\coth t\csch^2t$,
\[
\begin{aligned}
 P'&=\csch^2t\sum_i\sinh a_i\prod_{h\neq i}\Phi_t(a_h),\\
 P''&=-2\coth t\csch^2t\sum_i\sinh a_i\prod_{h\neq i}\Phi_t(a_h)
 +\csch^4t\sum_{i\neq j}\sinh a_i\sinh a_j\prod_{h\notin\{i,j\}}\Phi_t(a_h).
\end{aligned}
\]
In $2\cosh t\,P'+\sinh t\,P''$ the two single sums cancel, since
$2\cosh t\csch^2t=2\coth t\csch t=\sinh t\cdot2\coth t\csch^2t$, and
the sum over ordered pairs $i\neq j$ is twice the sum over $i<j$; this
gives \eqref{eq:kernel-density}. For $t\geq L$ every term is
nonnegative, because $\sinh a_i\geq0$ and
$\Phi_t(a_h)=\sinh(t-a_h)/\sinh t\geq0$ for $a_h\leq L\leq t$, and the
bound follows from $\sinh a_i\leq\sinh L$, $\Phi_t(a_h)\leq1$ and the
number $N(N-1)/2$ of pairs. For the atom, $\Phi_L(L)=0$ gives $P(L)=0$,
so $G'(L)=\sinh L\,P'(L)$. If $a_i$ is the only entry equal to $L$, every
term of $P'(L)$ other than the $i$th contains the factor $\Phi_L(a_i)=0$,
and the $i$th term is $\csch^2L\sinh L\prod_{h\neq i}\Phi_L(a_h)$, so
$G'(L)=\prod_{h\neq i}\Phi_L(a_h)$. If two entries equal $L$, every term
contains a vanishing factor and $G'(L)=0$.

Let $f$ be smooth with compact support. Two
integrations by parts on $[L,\infty)$ give
\[
 \int_L^\infty Gf''\dd
 =\Bigl[Gf'-G'f\Bigr]_{t=L}^{t=\infty}+\int_L^\infty G''f\dd
 =-G(L)f'(L)+G'(L)f(L)+\int_L^\infty G''f\dd,
\]
the terms at infinity vanishing because $f$ has compact support. Since
$G(L)=\sinh L\,P(L)=0$, subtracting $\int_L^\infty Gf\dd$ gives
\eqref{eq:universal-kernel}. The right side of
\eqref{eq:universal-kernel} makes sense for bounded continuous $f$
because $G''-G$ is integrable on $[L,\infty)$ by (i).

Let $f$ be smooth with compact support
in $[0,\infty)$ and put $c(t)=\sinh t\,(f''-f)(t)$, which is bounded with
compact support. Define
\[
 E(X,Y)=\int_0^\infty c(t)\,\Phi_t(L)\Phi_t(d)^m\dd,\qquad
 A(X,Y)=\int_0^\infty c(t)\,\Phi_t(L)^2\Phi_t(d)^{m-1}\dd.
\]
The integrands vanish for $t<L$, so by (ii), applied to the lists
$(L,d,\dots,d)$ and $(L,L,d,\dots,d)$, $E=\Sop_m(Y)f$ and
$A=\Top_m(Y)f$. For each $t$ the integrands solve \eqref{eq:m-system} by
Proposition~\ref{prop:opmodes}. They are Lipschitz in $(X,Y)$ with
constant bounded by a multiple of $\coth t$, since $|\Phi_t'|\leq\coth t$
on $[0,t]$, and $c(t)\coth t=\cosh t\,(f''-f)(t)$ is bounded, so
differentiation under the integral sign is permitted and $(E,A)$ solves
\eqref{eq:m-system}.

Let $f$ be bounded and smooth, and let $\chi_n$ be
smooth with $0\leq\chi_n\leq1$, $\chi_n=1$ on $[0,n]$ and $\chi_n=0$ on
$[n+1,\infty)$. The functions $f_n=\chi_nf$ are smooth with compact
support, so $E_n=\Sop_m(Y)f_n$ and $A_n=\Top_m(Y)f_n$ solve
\eqref{eq:m-system}. Let $K=G''-G$ denote the kernel of either list.
Where $L<n$ the atoms of $f$ and $f_n$ coincide, and $f=f_n$ on $[L,n]$,
so
\[
 \Sop_m(Y)f-E_n=\int_n^\infty K(t)\,(f-f_n)(t)\dd,
\]
and likewise for $\Top_m(Y)f-A_n$. The kernel $K$ and its first
derivatives in $L$ and $d$ are bounded by a multiple of $\sinh^{-3}t$,
locally uniformly in $(L,d)$, and the lower limit $n$ does not depend on
$(X,Y)$. Hence the differences and their first derivatives are bounded
by a multiple of $\sup|f|\int_n^\infty\sinh^{-3}t\dd$, which tends to
zero uniformly on compact subsets of $\{X>0,\,Y\geq0\}$. Thus
$(E_n,A_n)\to(\Sop_m(Y)f,\Top_m(Y)f)$ together with first derivatives,
and \eqref{eq:m-system} passes to the limit.

We now prove \eqref{eq:transfer-initial}. At $Y=0$ we have
$L=X$ and $d=0$. For the list $(X,0,\dots,0)$ every pair $i<j$ contains
the factor $\sinh0=0$, so $G''-G\equiv0$, and $G'(X)=\Phi_X(0)^m=1$;
hence $\Sop_m(0)f=f$. For the list $(X,X,0,\dots,0)$ the atom vanishes,
the only surviving pair is the two copies of $X$, and
$G''-G=2\sinh^2X/\sinh^3t$; hence
$\Top_m(0)f=2\sinh^2X\int_X^\infty f(t)\sinh^{-3}t\dd=\Iop f$ by
\eqref{eq:operators}. Finally $A=\Top_m(Y)f$ tends to zero as
$X\to\infty$ with $Y$ fixed, since by (i)
$|A|\leq N^2\sup|f|\,\sinh^2L\int_L^\infty\sinh^{-3}t\dd=\O(e^{-L})$.
The first equation of \eqref{eq:m-system} reads
$(A/\sinh^2X)_X=-2E/\sinh^3X$, and integrating it from $X$ to infinity
gives $A=\Iop E$, that is, $\Top_m(Y)=\Iop\Sop_m(Y)$. Substituting this
into the second equation gives $E_Y=\frac1mE_X+\Kop E=\Lop_mE$.
\end{proof}

In \eqref{eq:transfer-definition} the atom
$G'(L)f(L)=(\sinh X/\sinh L)^mf(L)$ carries the data along the
characteristic, with the integrating factor of the local part $-\coth X$
of $\Kop$, and the kernel \eqref{eq:kernel-density} collects the
contribution of the nonlocal part $\csch X\,\Iop$; together they form the
Green's function of the Cauchy problem. The name propagator is justified
by the semigroup law $\Sop_m(Y')\Sop_m(Y)=\Sop_m(Y+Y')$: since
$\Phi_t(X+d)=\Phi_t(d)\Phi_{t-d}(X)$, formula \eqref{eq:mode-transfer}
reads $\Sop_m(Y)\Phi_t=\Phi_t(d)^{m+1}\Phi_{t-d}$, so a mode is sent to a
multiple of the mode with pole $t-d$, and
$\Phi_t(d)\Phi_{t-d}(d')=\Phi_t(d+d')$ gives the law on modes and hence,
by superposition, on general data. Section~\ref{sec:region12} applies
$\Sop_2$ to the interface data in Regions I and II, and
Section~\ref{sec:region3} applies $\Sop_3$ followed by $\Sop_2$ in
Region III; the regional formulas then follow from
\eqref{eq:mode-transfer} mode by mode, and the positivity of the Green's
function of the Cauchy problem is used throughout
Section~\ref{sec:verification}.

\subsection{Regions I and II: the single-integral formula}\label{sec:region12}

We now derive the solution formula \eqref{eq:formula12} in Regions I and II. In this case, the five expert formula is an additive perturbation of the four expert formula established by Bayraktar, Ekren, and
Zhang~\cite[Theorem~3.1]{BEZ20}.
To illustrate the characteristic method used below for five experts, we first give a simple rederivation of the four expert formula in the current notation; its $C^2$ regularity and the optimality of the COMB control are
taken from~\cite[Theorems~3.1 and~3.2]{BEZ20}.

For $x_1\geq x_2\geq x_3\geq x_4$, recall the coordinates
\eqref{eq:four-coordinates}, for which $z_1\geq z_2\geq|z_3|$. We write
$u_4=x_1+F_4(z_1,z_2,z_3)/k$ and seek a correction satisfying the
two fixed-direction equations
$\partial_{z_2}^2F_4=\partial_{z_3}^2F_4=F_4$.
These correspond to the controls $(1,0,1,0)$ and $(1,0,0,1)$.
Equality of adjacent coordinate derivatives on the three faces gives
\begin{equation}\label{eq:four-faces}
\begin{aligned}
 \partial_{z_1}F_4-\partial_{z_2}F_4&=0 &&(z_1=z_2),\\
 \partial_{z_2}F_4+\partial_{z_3}F_4&=-1 &&(z_3=-z_2),\\
 \partial_{z_2}F_4-\partial_{z_3}F_4&=0 &&(z_3=z_2).
\end{aligned}
\end{equation}
We first solve the equation in $z_3$, writing
$F_4=\alpha(z_1,z_2)\cosh z_3+\beta(z_1,z_2)\sinh z_3$, so that
$\partial_{z_2}F_4=\alpha_{z_2}\cosh z_3+\beta_{z_2}\sinh z_3$ and
$\partial_{z_3}F_4=\alpha\sinh z_3+\beta\cosh z_3$. On the faces
$z_3=-z_2$ and $z_3=z_2$ the last two conditions in
\eqref{eq:four-faces} read
\[
\begin{aligned}
 \alpha_{z_2}\cosh z_2-\beta_{z_2}\sinh z_2-\alpha\sinh z_2+\beta\cosh z_2&=-1,\\
 \alpha_{z_2}\cosh z_2+\beta_{z_2}\sinh z_2-\alpha\sinh z_2-\beta\cosh z_2&=0,
\end{aligned}
\]
and adding and subtracting them gives
\[
 \partial_{z_2}\alpha\cosh z_2-\alpha\sinh z_2=-\tfrac12,
 \qquad
 \beta\cosh z_2-\partial_{z_2}\beta\sinh z_2=-\tfrac12.
\]
Dividing by $\cosh^2z_2$ and by $\sinh^2z_2$, respectively, these are
$(\alpha/\cosh z_2)_{z_2}=-\tfrac12\sech^2z_2$ and
$(\beta/\sinh z_2)_{z_2}=\tfrac12\csch^2z_2$, so integration in $z_2$
gives
$\alpha=A(z_1)\cosh z_2-\sinh z_2/2$ and
$\beta=B(z_1)\sinh z_2-\cosh z_2/2$. Thus
\[
 F_4=A(z_1)\cosh z_2\cosh z_3
       +B(z_1)\sinh z_2\sinh z_3-\tfrac12\sinh(z_2+z_3).
\]
This also satisfies the equation in $z_2$. On the remaining face
$z_1=z_2>0$,
\[
\begin{aligned}
 \partial_{z_1}F_4-\partial_{z_2}F_4
 ={}&\bigl(A'\cosh z_2-A\sinh z_2+\tfrac12\cosh z_2\bigr)\cosh z_3\\
 &+\bigl(B'\sinh z_2-B\cosh z_2+\tfrac12\sinh z_2\bigr)\sinh z_3,
\end{aligned}
\]
and the coefficients of $\cosh z_3$ and $\sinh z_3$ must vanish
separately, since $-z_1\leq z_3\leq z_1$. With $z_2=z_1$ we obtain
\begin{equation}\label{eq:four-coefficient-odes}
 A'-\tanh z_1\,A=-\tfrac12,\qquad
 B'-\coth z_1\,B=-\tfrac12.
\end{equation}
The integrating factors are $\sech z_1$ and $\csch z_1$, respectively.
Boundedness of the correction fixes the constants of integration:
first take $z_2=z_3=0$ and let $z_1\to\infty$ to exclude a
homogeneous term proportional to $\cosh z_1$ in $A$; then take
$z_2=z_3>0$ fixed to exclude a term proportional to $\sinh z_1$
in $B$. Integrating from infinity therefore gives
\begin{equation}\label{eq:four-coefficients}
\begin{aligned}
 A(z_1)&=\tfrac12\cosh z_1\int_{z_1}^\infty\sech t\dd=\theta(z_1)\cosh z_1,\\
 B(z_1)&=\tfrac12\sinh z_1\int_{z_1}^\infty\csch t\dd=\tfrac12\lambda(z_1)\sinh z_1.
\end{aligned}
\end{equation}
Consequently $F_4$ is the function \eqref{eq:four-background}.
At $z_1=0$, the domain forces $z_2=z_3=0$, and the formula is
interpreted by continuity. In particular, the product containing
$\lambda(z_1)\sinh z_1$ tends to zero. To identify \eqref{eq:four-background}
with \cite[equation~(3.1)]{BEZ20}, reverse their ascending order,
$(x^{(1)},\dots,x^{(4)})=(x_4,x_3,x_2,x_1)$: their exponential argument
becomes $-z_1$, their three hyperbolic arguments become $-z_2,-z_3,z_1$,
$k(x_1-x_2)=z_2+z_3$, and $\atanh(e^{-z_1})=\lambda(z_1)/2$. Substitution
gives exactly \eqref{eq:four-background}. By \cite[Theorems~3.1
and~3.2]{BEZ20}, $u_4=x_1+F_4/k$ is the global $C^2$ solution of the
four-expert equation and $(1,0,1,0)$ attains its Hamiltonian maximum
throughout the closed ordered sector; Section~\ref{sec:verification} uses
these two facts.

To lift the four expert solution to five experts, we need to use the density $p$ defined earlier in \eqref{eq:density}. The density is related to the trace by
\[
 p(X)=\tfrac12\tanh X-\eta(X).
\]
Indeed, by \eqref{eq:eta} the right side satisfies
\begin{equation}\label{eq:p-ode}
 p'+6\coth X\,p=\tfrac12\sech^2X,
\end{equation}
since $(\tfrac12\tanh X)'+3\coth X\tanh X=\tfrac12\sech^2X+3$ and
$\eta'+6\coth X\,\eta=3$. With the integrating factor $\sinh^6X$ this
reads $(\sinh^6X\,p)'=\tfrac12\sinh^6X\sech^2X$, and, exactly as for
$\eta$, the solution that is regular at zero is the positive integral in
\eqref{eq:density}. The elementary form
\eqref{eq:p-elementary} follows from $\eta=3\coth X-15I(X)/\sinh^6X$. Now, the corresponding interface endpoint and upper trace are
\begin{equation}\label{eq:four-traces}
 e_4(X)=\cosh X\,\theta(X),\qquad
 h_4(X)=\cosh^2X\,\theta(X)-\tfrac12\sinh X=\Iop e_4(X).
\end{equation}
Since $\eta=e-e''$ and $p=\tfrac12\tanh X-\eta$, we have
$e''-e=p-\tfrac12\tanh X$; and $\theta'=-\tfrac12\sech X$ gives
$e_4''-e_4=-\tfrac12\tanh X$, so the decaying difference $g=e-e_4$ obeys
$g''-g=p$. Its Green representation, which is the expansion of $g$ in
truncated modes with density $\sinh t\,p(t)$, is
\begin{equation}\label{eq:g-green}
 g(X)=\int_X^\infty\sinh(t-X)p(t)\dd
     =\int_0^\infty\sinh t\,p(t)\,\Phi_t(X)\dd.
\end{equation}
The integral converges, since \eqref{eq:density} gives
$p(t)=\O(e^{-2t})$. There is no free decaying homogeneous term: \eqref{eq:e-asymptotic} and the expansion of $e_4$ show
$g(X)=\O(e^{-2X})$, excluding both $e^X$ and $e^{-X}$ homogeneous terms.
Indeed, every other solution of $g''-g=p$ that decays at infinity differs
from \eqref{eq:g-green} by $Be^{-X}$. The stronger decay $g=\O(e^{-2X})$
forces $B=0$, so no additional boundary assumption at zero is needed.

We now solve the equations at the interface where $Y=0$, or rather $y_1=y_3$. Here, the traces coincide pairwise by \eqref{eq:trace-defs}, $a=b=F(X,0,X,R)$ and
$\ell=q=F(0,X,0,R).$ We write $H(X,R)$ and $E(X,R)$ for these common values.
They satisfy
\begin{equation}\label{eq:interface-system}
 H=\Iop E,\qquad E_R=2\Kop E+1,\qquad E(X,0)=e(X).
\end{equation}
The first equation is $a=\Iop\ell$ of Section~\ref{sec:traces}, the
integrated form of the first equation in \eqref{eq:left-system}, at
$Y=0$. For the second equation, add the two equations \eqref{eq:trace-Y}. At $Y=0$
we have $\ell=q=E$, hence $\ell_X=q_X=E_X$, $\ell_R=q_R=E_R$, and
$\Kop\ell=\Kop q=\Kop E$, so by \eqref{eq:generators} the sum is
\[
 \ell_Y+q_Y=E_X+2\Kop E+\tfrac12-\tfrac12E_R.
\]
On the other hand, by \eqref{eq:trace-defs}, $\ell_Y=F_{y_3}(0,X,Y,R)$
and $q_Y=F_{y_1}(Y,X,0,R)$. Therefore at $Y=0$ both are derivatives of $F$ at
the point $(0,X,0,R)$, which lies on the faces $y_1=0$ and $y_3=0$. The
first and third conditions in \eqref{eq:faces} give
$F_{y_1}=\tfrac12(F_{y_2}-1)$ and $F_{y_3}=\tfrac12(F_{y_2}+F_{y_4})$
there, while $E(X,R)=F(0,X,0,R)$ gives $E_X=F_{y_2}$ and $E_R=F_{y_4}$.
Hence
\[
 \ell_Y+q_Y=E_X+\tfrac12E_R-\tfrac12.
\]
Equating the two expressions gives $E_R=2\Kop E+1$. Finally,
$E(X,0)=F(0,X,0,0)=\Psi(0,X)=e(X)$ by \eqref{eq:e-def}, and likewise
$H(X,0)=\Psi(X,0)=h(X)$, in agreement with $h=\Iop e$.

The pair $(h_4,e_4)$ of \eqref{eq:four-traces} is a stationary solution
of the first two equations in \eqref{eq:interface-system}: $h_4=\Iop e_4$
by \eqref{eq:four-traces}, and
\[
 \Kop e_4=-\coth X\,e_4+\csch X\,h_4
 =-\frac{\cosh^2X}{\sinh X}\,\theta(X)+\frac{\cosh^2X}{\sinh X}\,\theta(X)
  -\tfrac12=-\tfrac12.
\]
Therefore $G=E-e_4$ satisfies $G_R=2\Kop G$ with $G(X,0)=e-e_4=g$.
Expanding $g$ in truncated modes by \eqref{eq:g-green} and using
$\Kop\Phi_t=-\coth t\,\Phi_t$ from \eqref{eq:mode-eigen}, each mode
evolves in $R$ by the factor $e^{-2R\coth t}$; applying
$\Iop\Phi_t=\Phi_t^2$, also from \eqref{eq:mode-eigen}, to the result
gives $H$. Thus
\begin{equation}\label{eq:interface-spectral}
\begin{aligned}
 E(X,R)&=e_4(X)+\int_X^\infty\sinh t\,p(t)e^{-2R\coth t}\Phi_t(X)\dd,\\
 H(X,R)&=h_4(X)+\int_X^\infty\sinh t\,p(t)e^{-2R\coth t}\Phi_t(X)^2\dd.
\end{aligned}
\end{equation}
Substituting these expressions directly into \eqref{eq:interface-system}
verifies the interface equations.

We now use the trace characteristics to extend the interface data into Regions I and II, by means of the propagator $\Sop_2$ of Lemma~\ref{lem:universal-kernel}.

In Region I, $y_1\leq y_3$, use $X=y_1+y_2$, $Y=y_3-y_1$, $R=y_4$, as in
\eqref{eq:reconstruct-left}. The left system \eqref{eq:left-system}
contains no $R$-derivative, so $R$ is a parameter, and its second
equation is $\ell_Y=\Lop_2\ell+\tfrac12$ by \eqref{eq:trace-Y}, with the
data $\ell(\cdot,0,R)=E(\cdot,R)$ at the interface. The constant
$\tfrac12$ is removed by a stationary solution: the pair
\[
 E_3=\tfrac16(3+e^{-2X}),\qquad A_3=\tfrac23e^{-X}
\]
satisfies the first equation of \eqref{eq:left-system},
$A_3'=2\coth X\,A_3-2\csch X\,E_3$, and $A_3$ is bounded, so
$A_3=\Iop E_3$ as in Section~\ref{sec:traces}; a direct computation then
gives $\Lop_2E_3=\tfrac12E_3'-\coth X\,E_3+\csch X\,A_3=-\tfrac12$. Hence
$\ell-E_3$ solves the homogeneous Cauchy problem
$(\ell-E_3)_Y=\Lop_2(\ell-E_3)$ with the bounded smooth data
$E(\cdot,R)-E_3$, and Lemma~\ref{lem:universal-kernel}(iii) together with
$a=\Iop\ell$ from Section~\ref{sec:traces} gives
\begin{equation}\label{eq:left-solution}
\ell=E_3+\Sop_2(Y)\bigl(E(\cdot,R)-E_3\bigr),\qquad a=\Iop \ell.
\end{equation}

In Region II, $y_3\leq y_1\leq y_3+2y_4$, use $X=y_2+y_3$, $Y=y_1-y_3$,
$R=y_4$, as in \eqref{eq:reconstruct-right}, and put $Z=R-Y/2$; the
condition $Z\geq0$ is the inequality $y_1\leq y_3+2y_4$ defining the
region. The right system \eqref{eq:right-system} contains $q_R$, and its
second equation is $q_Y+\tfrac12q_R=\Lop_2q$ by \eqref{eq:trace-Y}. The
operator $\partial_Y+\tfrac12\partial_R$ differentiates along the lines
$R-Y/2=Z$ of the $(Y,R)$-plane, so for fixed $Z$ the function
$\tilde q(X,Y)=q(X,Y,Z+Y/2)$ satisfies $\tilde q_Y=\Lop_2\tilde q$ with
data $\tilde q(\cdot,0)=q(\cdot,0,Z)=E(\cdot,Z)$: the backward
characteristic through $(Y,R)$ reaches the interface $Y=0$ at $R=Z$.
Lemma~\ref{lem:universal-kernel}(iii) and $b=\Iop q$ give
\begin{equation}\label{eq:right-solution2}
q=\Sop_2(Y)E(\cdot,Z),\qquad b=\Iop q.
\end{equation}
When $Z<0$ the characteristic meets the bottom face $R=0$ first; this is
Region III, treated in Section~\ref{sec:region3}.

We are now equipped to derive \eqref{eq:formula12}. In the coordinates \eqref{eq:four-coordinates}--\eqref{eq:coords12} of
Section~\ref{sec:main}, Region II is $z_3\geq0$, $z_4\geq0$ and Region I
is $z_3\leq0\leq z_4$. The characteristic coordinates of
Lemma~\ref{lem:universal-kernel} for $m=2$ are $L=X+Y/2$ and $d=Y/2$. In
Region II, $L=y_2+\tfrac12(y_1+y_3)=z_1$, $d=\tfrac12(y_1-y_3)=z_3$, and
$Z=R-Y/2=y_4-z_3=z_4$; in Region I, $L=y_2+\tfrac12(y_1+y_3)=z_1$,
$d=\tfrac12(y_3-y_1)=-z_3$, and $R=y_4=z_4$. With $d=|z_3|$, both regions
are therefore described by $0\leq d\leq z_2\leq z_1$ and $z_4\geq0$, and
the propagations \eqref{eq:left-solution} and \eqref{eq:right-solution2}
both start from the interface data $E(\cdot,z_4)$ of
\eqref{eq:interface-spectral}.

Consider Region II. By \eqref{eq:interface-spectral}, $E(\cdot,Z)$ is
$e_4$ plus a superposition of truncated modes with density
$\sinh t\,p(t)e^{-2Z\coth t}$, so \eqref{eq:mode-transfer}, applied mode
by mode in \eqref{eq:right-solution2}, gives
\begin{equation}\label{eq:region2-traces}
\begin{aligned}
 q&=\Sop_2(Y)e_4
   +\int_{z_1}^\infty\sinh t\,p(t)e^{-2z_4\coth t}\Phi_t(z_1)\Phi_t(z_3)^2\dd,\\
 b&=\Iop\Sop_2(Y)e_4
   +\int_{z_1}^\infty\sinh t\,p(t)e^{-2z_4\coth t}\Phi_t(z_1)^2\Phi_t(z_3)\dd,
\end{aligned}
\end{equation}
the integrals starting at $z_1$ because $\Phi_t(z_1)=0$ for $t<z_1$.
Reconstructing by \eqref{eq:reconstruct-right}, with $X=y_2+y_3$, the
integrands combine to
\[
 \frac{\sinh y_3\,\Phi_t(z_1)+\sinh y_2\,\Phi_t(z_3)}{\sinh(y_2+y_3)}
 \,\Phi_t(z_1)\Phi_t(z_3)=\Phi_t(z_1)\Phi_t(z_3)\Phi_t(z_2),
\]
because $z_1=z_2+y_2$ and $z_3=z_2-y_3$, so that with $u=t-z_2$ the
addition formula gives
\begin{equation}\label{eq:reconstruction-identity}
 \sinh y_3\sinh(u-y_2)+\sinh y_2\sinh(u+y_3)=\sinh u\,\sinh(y_2+y_3).
\end{equation}
This is the integral in \eqref{eq:formula12}. The four-expert parts of
\eqref{eq:region2-traces}, $\Sop_2(Y)e_4$ and $\Iop\Sop_2(Y)e_4$, are the
traces $q$ and $b$ of $F_4$ itself: $F_4$ is independent of $R$ and
satisfies the face conditions \eqref{eq:four-faces}, so its traces solve
the same Cauchy problem with the interface data $e_4$. Reconstructing them
returns $F_4(z_1,z_2,z_3)$, as one can also confirm by substituting
\eqref{eq:four-background} directly.

Region I is identical, with \eqref{eq:left-solution} and
\eqref{eq:reconstruct-left} in place of \eqref{eq:right-solution2} and
\eqref{eq:reconstruct-right}. The stationary pair $(E_3,A_3)$ belongs to
the four-expert part, $E_3+\Sop_2(Y)(e_4-E_3)$ being the trace $\ell$ of
$F_4$; the modes carry the factors $\Phi_t(z_1)\Phi_t(d)^2$ and
$\Phi_t(z_1)^2\Phi_t(d)$ with $d=-z_3$; and the addition formula holds
with $y_1$ in place of $y_3$ and $-z_3$ in place of $z_3$, since
$z_1=z_2+y_2$ and $-z_3=z_2-y_1$. Both regions therefore produce the
integrand of \eqref{eq:formula12}, in which $z_3$ enters only through
$|z_3|$, and the two formulas match on the interface $y_1=y_3$, where
$a=b$ and $\ell=q$ by \eqref{eq:trace-defs}. This proves
\eqref{eq:formula12}, which requires only one quadrature of elementary
functions.

\subsection{Region III: the finite formula}\label{sec:region3}
We now prove \eqref{eq:finite3}, in the coordinates \eqref{eq:coords3} of
Section~\ref{sec:main}, by propagating the slice trace $e$ along the
bottom face and then off it.

We begin with the propagation. In Region III, $y_1\geq y_3+2y_4$, we
keep the
Region II coordinates $X=y_2+y_3$, $Y=y_1-y_3$, $R=y_4$ of
\eqref{eq:reconstruct-right} and put $W=Y-2R\geq0$; in the coordinates
\eqref{eq:coords3}, $W=3a_1$, $R=a_2-a_1$, and $X=a_4-a_2$. The right
system \eqref{eq:right-system} again gives $q_Y+\tfrac12q_R=\Lop_2q$ by
\eqref{eq:trace-Y}, but now $Z=R-Y/2<0$, so the backward characteristic
$R-Y/2=Z$ through $(Y,R)$ meets the bottom face $R=0$ before the
interface, at $Y=W$. On the bottom face the fourth face condition gives
$q_X=3q_R$ by \eqref{eq:bottom-traces}, so there, by
\eqref{eq:generators},
\[
 q_Y=\Lop_2q-\tfrac12q_R=\tfrac12q_X+\Kop q-\tfrac16q_X
    =\tfrac13q_X+\Kop q=\Lop_3q.
\]
The data at $Y=R=0$ is $q(X,0,0)=F(0,X,0,0)=\Psi(0,X)=e(X)$ by
\eqref{eq:trace-defs} and \eqref{eq:e-def}. Hence
Lemma~\ref{lem:universal-kernel}(iii) with $m=3$ gives
$q(\cdot,W,0)=\Sop_3(W)e$ on the bottom face, and with $m=2$, applied
along the characteristic as in Region II to
$\tilde q(X,s)=q(X,W+s,s/2)$ for $0\leq s\leq2R$, it gives
\begin{equation}\label{eq:right-solution3}
 q=\Sop_2(2R)\Sop_3(W)e,\qquad b=\Iop q,
\end{equation}
the second identity being $b=\Iop q$ of Section~\ref{sec:traces}.

We next compute the effect of the two propagators on a single mode.
Apply \eqref{eq:right-solution3} to a truncated
mode $\Phi_t$ with $t>a_4$. For $\Sop_3(W)$ the characteristic
coordinates of Lemma~\ref{lem:universal-kernel} are $L=X+W/3=X+a_1$ and
$d=W/3=a_1$, so \eqref{eq:mode-transfer} and the shift identity
$\Phi_t(u+a_1)=\Phi_t(a_1)\Phi_{t-a_1}(u)$ give
\[
 \Sop_3(W)\Phi_t=\Phi_t(X+a_1)\Phi_t(a_1)^3=\Phi_t(a_1)^4\,\Phi_{t-a_1}(X).
\]
For $\Sop_2(2R)$ they are $L=X+R$ and $d=R$. Applying
\eqref{eq:mode-transfer} to the mode $\Phi_{t-a_1}$ and using the shift
identity twice more, with $X+R+a_1=a_4$ and $R+a_1=a_2$,
\[
\begin{aligned}
 q&=\Phi_t(a_1)^4\,\Phi_{t-a_1}(X+R)\Phi_{t-a_1}(R)^2
   =\Phi_t(a_1)\Phi_t(a_4)\Phi_t(a_2)^2,\\
 b&=\Phi_t(a_1)^4\,\Phi_{t-a_1}(X+R)^2\Phi_{t-a_1}(R)
   =\Phi_t(a_1)\Phi_t(a_4)^2\Phi_t(a_2).
\end{aligned}
\]
Reconstructing by \eqref{eq:reconstruct-right}, the identity
\eqref{eq:reconstruction-identity} with $u=t-a_3$, since $a_4=a_3+y_2$
and $a_2=a_3-y_3$, gives
\[
 \frac{\sinh y_3\,\Phi_t(a_4)+\sinh y_2\,\Phi_t(a_2)}{\sinh(y_2+y_3)}
 =\Phi_t(a_3),
\]
so a mode produces the four-factor product $\prod_{i=1}^4\Phi_t(a_i)$.

We now pass from modes to the datum $e$ and obtain the kernel form of
$F$. For compactly supported smooth data $f$ in place
of $e$, the Green representation \eqref{eq:interpolation} and the
linearity of every step give
\[
 F=\int_{a_4}^\infty\sinh t\prod_{i=1}^4\Phi_t(a_i)\,(f''-f)(t)\dd
  =\mathcal P_{a_1,a_2,a_3,a_4}f
\]
by Lemma~\ref{lem:universal-kernel}(ii), the list $(a_1,a_2,a_3,a_4)$
having its entries in $[0,a_4]$ with $a_4=L$; the integral starts at
$a_4$ because $\Phi_t(a_4)=0$ for $t<a_4$. Both sides depend on $f$
through kernels of the type in Lemma~\ref{lem:universal-kernel}, and the
cutoff argument in its proof extends the identity from compactly
supported to bounded smooth data, so $F=\mathcal P_{a_1,a_2,a_3,a_4}e$.
Only the factor $\Phi_t(a_4)$ vanishes at $t=a_4$, so by
Lemma~\ref{lem:universal-kernel}(i) the atom is
$G'(a_4)=\prod_{i=1}^3\Phi_{a_4}(a_i)$ and the kernel
\eqref{eq:kernel-density} is a sum over the six pairs $i<j$:
\begin{equation}\label{eq:positive3}
\begin{aligned}
 F={}&A(a_4,a)e(a_4)+2\int_{a_4}^\infty\frac{e(t)}{\sinh^3t}
       \sum_{i<j}\sinh a_i\sinh a_j
             \prod_{h\notin\{i,j\}}\Phi_t(a_h)\dd,\\
 A(a_4,a)&=\prod_{i=1}^3\frac{\sinh(a_4-a_i)}{\sinh a_4}.
\end{aligned}
\end{equation}
This is the representation whose positivity Section~\ref{sec:verification}
uses.

Finally we derive the coefficients of the finite formula. Return to
compactly supported smooth data $f$, for which
$F[f]=\int_{a_4}^\infty\sinh t\prod_{i=1}^4\Phi_t(a_i)\,(f''-f)(t)\dd$,
and write the three factors $\Phi_t(a_i)=\cosh a_i-\coth t\,\sinh a_i$,
$i\leq3$, in the mode product. The product is multiaffine in the pairs
$(\cosh a_i,\sinh a_i)$, and collecting the terms with $j$ factors
$\sinh a_i$ gives
\[
 F[f]=\sum_{j=0}^3b_j[f]\,\sigma_j,\qquad
 b_j[f]=\int_{a_4}^\infty\sinh t\,(-\coth t)^j\,\Phi_t(a_4)\,(f''-f)(t)\dd,
\]
where $\sigma_0=\cosh a_1\cosh a_2\cosh a_3$,
$\sigma_3=\sinh a_1\sinh a_2\sinh a_3$, and $\sigma_1$, $\sigma_2$ are
the sums of three products displayed in \eqref{eq:finite3}. By the
eigenfunction identity \eqref{eq:mode-eigen},
$(-\coth t)^j\Phi_t=\Kop^j\Phi_t$, with $\Kop$ acting on the variable
$a_4$; since $f''-f$ has compact support, $\Kop^j$ may be taken outside
the integral, and the Green representation \eqref{eq:interpolation} gives
$b_j[f]=\Kop^jf$ evaluated at $a_4$. The integral defining $b_j[f]$
cannot be used with $e$ in place of $f$: since $e''-e=-\eta\to-\tfrac12$
and $\sinh t\,\Phi_t(a_4)=\sinh(t-a_4)$, its integrand would grow like
$e^{t}$. Integrating by parts twice instead, with
$W_j(t)=(-\coth t)^j\sinh(t-a_4)$, $W_j(a_4)=0$, and
$W_j'(a_4)=(-\coth a_4)^j$, gives
\[
 b_j[f]=(-\coth a_4)^jf(a_4)+\int_{a_4}^\infty(W_j''-W_j)(t)\,f(t)\dd,
\]
whose density $W_j''-W_j$ is $\O(e^{-3t})$ for fixed $a_4>0$; it vanishes
for $j=0$ and equals $2\sinh a_4\csch^3t$ for $j=1$. This form and the
kernel form $F[f]=\mathcal P_{a_1,a_2,a_3,a_4}f$ both pass to the bounded
datum $e$ under the cutoffs $f_n\to e$ used above, by dominated
convergence, and so does $\Kop^jf_n\to\Kop^je$, locally uniformly on
$(0,\infty)$, because $\Iop$ acts on bounded functions through an
absolutely convergent integral. Hence
$F=\sum_{j=0}^3b_j\sigma_j$ with $b_j=\Kop^je$ evaluated at $a_4$,
$j=0,1,2,3$, and $b_0=e$. It remains to compute $\Kop^je$ for $j\leq3$.

Two integration-by-parts identities are
\begin{equation}\label{eq:operator-identities}
 \partial_X\Iop=\Iop(\partial_X+\Kop),\qquad \Kop^2-[\partial_X,\Kop]=\mathrm{Id}.
\end{equation}
For the second identity, we set
\[J_j=\int_X^\infty f(t)\coth^jt/\sinh^3t\dd.\]
Both $\Kop^2f$ and $[\partial_X,\Kop]f$ contain
$2\cosh XJ_0-6\sinh XJ_1$; their multiplication terms are respectively
$\coth^2Xf$ and $\csch^2Xf$. This proves their difference is $f$.
The first identity follows by differentiating \eqref{eq:operators} and
integrating $\Iop\partial_Xf$ by parts.
Also, the antiderivatives
$\int\csch^3t\dd=-\tfrac12\csch t\coth t+\tfrac12\lambda(t)$ and
$\int\lambda(t)\csch^2t\dd=-\lambda(t)\coth t+\csch t$, which use
$\lambda'=-\csch t$, give $\Iop1=\cosh X-\lambda(X)\sinh^2X$ and
$\Iop(\lambda\sinh X)=2\lambda(X)\sinh X\cosh X-2\sinh X$, hence
\begin{equation}\label{eq:K-constants}
 \Kop1=-\lambda(X)\sinh X,\qquad \Kop^21=2-\lambda(X)\cosh X.
\end{equation}
The compatibility relation \eqref{eq:compat} is $\Kop e=\tfrac16(e'-3)$,
that is, $6b_1=e'-3$. The second identity in
\eqref{eq:operator-identities} says $\Kop f'=(\Kop f)'-\Kop^2f+f$. With
$f=e$ it gives $\Kop e'=b_1'-b_2+e$, so
$b_2=\Kop b_1=\tfrac16\Kop e'-\tfrac12\Kop1=\tfrac16(b_1'-b_2+e)-\tfrac12\Kop1$,
that is, $7b_2=b_1'+e-3\Kop1$. With $f=b_1$ it gives
$\Kop b_1'=b_2'-b_3+b_1$, so
$b_3=\Kop b_2=\tfrac17(\Kop b_1'+\Kop e-3\Kop^21)=\tfrac17(b_2'-b_3+2b_1-3\Kop^21)$,
that is, $8b_3=b_2'+2b_1-3\Kop^21$. Inserting \eqref{eq:K-constants} and
$(\lambda\sinh X)'=\lambda\cosh X-1$,
\[
 42b_2=e''+6e+18\lambda\sinh X,\qquad
 336b_3=e'''+20e'+144\lambda\cosh X-312,
\]
which are the coefficients \eqref{eq:coefficients} evaluated at $a_4$.
This completes the derivation of the finite formula \eqref{eq:finite3}.
In this representation, only $e(a_4)$ requires the quadrature
\eqref{eq:e-quadrature}; the derivatives of $e$ are eliminated by
\eqref{eq:e-first} and \eqref{eq:trace-derivatives}, the latter being
\eqref{eq:e-ode} and its derivative.

On the interface $z_4=0$ the two coordinate systems coincide, with
$a_1=0$, and $\Phi_t(0)=1$ remove the factor $\Phi_t(a_1)$, so
\eqref{eq:positive3} becomes the three-factor Green's representation
$\mathcal P_{a_2,a_3,a_4}e$ of the interface datum $e$. The Region II
formula \eqref{eq:formula12} at $z_4=0$ is the same representation
applied to $e=e_4+g$: by Section~\ref{sec:region12}, its four-expert
part is $F_4$ and its density part is the integral with $p=g''-g$, the
Laplace factor $e^{-2z_4\coth t}$ being $1$. The two formulas therefore
have the same value on the interface. Their integrands are different,
one involving $e$ and the other $p$ and $F_4$; the agreement of their
transverse derivatives up to second order is proved in
Section~\ref{sec:reg}.

\section{Verification, regularity, and the optimality set of COMB}\label{sec:verification}

The derivation of the five expert solution formula given in Section \ref{sec:derivation} does not prove the formula is the viscosity solution of the expert PDE \eqref{eq:pde}. This section addresses this in multiple steps. We first verify in Section \ref{sec:reg} that the solution is $C^2$ across the sector boundaries, and that it has linear growth (bounded correction from the payoff). Then we verify in Sections \ref{sec:H3}, \ref{sec:H12}, and \ref{sec:H122} that $\v_*=(1,0,1,0,0)$ attains the Hamiltonian maximum throughout the whole ordered sector, which allows us to establish the formula as the unique viscosity solution of \eqref{eq:pde} in Section \ref{sec:comb}, and complete our proof of non-optimality of the COMB strategy. 

Many of the proofs in this section require tedious algebraic work, and
part of it is delegated to a computer. The division of labor is as
follows. The arguments that reduce Theorems~\ref{thm:main}
and~\ref{thm:comb} to finite computations are given in full in the text:
the face operators and the $C^2$ matching across the interfaces, the
symmetry and multiaffine reductions from the sector to a finite list of
corner cases, the sign principles of Lemmas~\ref{lem:laplace}
and~\ref{lem:cumulative}, the moment recursion, the integration of
differential sign identities from infinity, and the viscosity-solution
argument. What remains after these reductions is finite and of
two kinds. The first is a list of exact algebraic identities among explicit
hyperbolic, rational, and trace expressions, among them the face
conditions, the value and the first two transverse derivatives at the interfaces, and the control tables,
which comprise $64$ Hessian contractions in Region III and $64$ corner
gaps in Regions I and II. The second is the sign of $21$ explicit
scalar functions of one variable, $8$ in Region III and $13$ in Regions I
and II. Each has the form $a(L)e(L)+b(L)$ with $a$ rational in
$\varrho=e^L$ and $b$ rational in $\varrho$ and linear in $L$ and
$\lambda(L)$; the differential sign identities of the form
\eqref{eq:first-scalar} and \eqref{eq:second-scalar3} eliminate the trace
$e$, and their residuals are then of the rational-log form
\eqref{eq:rational-log-form}.

Both kinds are verified by the computational supplement described in
Section~\ref{sec:certificates}, and the text states at each such point
which check of the supplement verifies it. The identities are checked
symbolically: after the substitution $\varrho=e^L$, and its analogues for
the other coordinates, every hyperbolic identity becomes an identity of
rational functions, which is decided by exact polynomial arithmetic. Each
identity is also derived in reduced form in the text, so any one of them
can be verified by hand. The scalar signs are established by exact
certificates: the logarithms in \eqref{eq:rational-log-form} are enclosed
between rational functions on the seven intervals \eqref{eq:boxes}, and
the resulting $147$ polynomial lower bounds are shown to be nonnegative
through their Bernstein coefficients \eqref{eq:bernstein}, which are exact
rational numbers. No floating-point arithmetic, numerical quadrature, or
sign sampling is used anywhere in the proof, so there are no rounding
errors to control. The trusted software consists of the exact integer and
rational arithmetic of CPython and the exact algebra and polynomial root
counting of SymPy, and the certificates are finite lists of integers that
the supplement rechecks with a separate program using only fraction
arithmetic. In this sense the computer-assisted part of the proof is an
exact symbolic computation with the same logical status as a long hand
calculation, not a formalization in a proof assistant. The supplement is a
fixed, versioned release with pinned dependencies that runs offline in a
few minutes; its review guide maps every check to an equation label of
this paper and lists the analytic steps that remain for the reader.

\subsection{Regularity and boundedness} \label{sec:reg}

The aim of this section is to prove regularity and boundedness of the solution formula for $u$. In particular, we prove the following result.
\begin{proposition}\label{prop:C2}
Let $u$ be the function of Theorem~\ref{thm:main}: on the ordered sector
$u=x_1+F(y)/k$ as in \eqref{eq:scaling}, with $F$ given by
\eqref{eq:formula12} in Regions I and II and by \eqref{eq:finite3} in
Region III, and $u$ is extended to $\R^5$ by sorting the coordinates.
Then $u\in C^2(\R^5)$, satisfies
$\nabla u \cdot \one = 1$, and has bounded correction to $\phi$. Furthermore, in 
the ordered sector, $D_{\v_*}^2u=2(u-x_1)$.
\end{proposition}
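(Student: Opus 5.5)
The proof checks the properties of Proposition~\ref{prop:C2} in order, from the easiest to the hardest.

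\emph{Fixed equation in the sector.} The identity $D_{\v_*}^2u=2(u-x_1)$ is equivalent, by \eqref{eq:direction-scaling} with $b_{\v_*}=(1,-1,1,0)$, to \eqref{eq:fixed}. In Regions I and II the direction $\partial_1-\partial_2+\partial_3$ acts on the coordinates \eqref{eq:four-coordinates}--\eqref{eq:coords12} as $\partial_{z_2}$, up to the scaling $k$. It leaves $z_1$, $z_3$ and $z_4$ fixed. On $F_4$ we have $\partial_{z_2}^2F_4=F_4$. In the integrand only $\Phi_t(z_2)$ depends on $z_2$, and it satisfies $\Phi_t''=\Phi_t$.

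In Region III the direction moves only $a_1,a_2,a_3$, each with unit speed. The three factors $\cosh a_i$, $\sinh a_i$ then give $(\sum_i\partial_{a_i})^2\sigma_j=$ a combination that reproduces $F$. Concretely, each symmetric product $\prod(\cosh a_i-\coth t\sinh a_i)$ is a function of $a_i+s$ of the form $\sinh(t-a_i-s)/\sinh t$, hence is annihilated by $\partial_s^2-1$ for every fixed $t$. It is cleanest to verify this through the kernel form \eqref{eq:positive3}. Indeed the whole derivation was built from solutions of \eqref{eq:fixed}, and on Region III this is a direct check.

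\emph{Translation invariance and boundedness.} The normalization $\nabla u\cdot\one=1$ holds automatically, because $u=x_1+F(y)/k$ depends on $x$ only through $x_1$ and the differences $y_i$. After sorting, the same is true of the permutation extension.

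For boundedness of $u-\phi=F/k$, I would bound each piece separately:
\begin{itemize}
\item $F_4$ is bounded by \cite{BEZ20}.
\item The integral in \eqref{eq:formula12} is bounded using $0\le\Phi_t\le1$ and $\int_0^\infty\sinh t\,p(t)\,dt<\infty$, which holds since $p=\O(e^{-2t})$.
\item In Region III, the positivity of the kernel in \eqref{eq:positive3} together with Lemma~\ref{lem:universal-kernel}(i) gives $|F|\le C\sup e$, since the atom plus the kernel integral is $\mathcal P1\le$ const. Alternatively, $F=\mathcal P e\le \sup e\cdot\mathcal P1$, and $\mathcal P1$ is itself of the bounded form obtained from $f\equiv1$.
\end{itemize}

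\emph{$C^2$ regularity.} This is the main obstacle. It has two parts.

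\textbf{(a) Faces of the sector.} The sorted extension is $C^2$ across a hyperplane $x_i=x_{i+1}$ if and only if the even reflection matches to second order. For first derivatives this is exactly the face conditions \eqref{eq:faces}. For second derivatives the odd (normal) part of the Hessian must vanish. I would obtain that condition by differentiating the face conditions tangentially and by combining them with \eqref{eq:fixed}.

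The face conditions hold by construction of the traces. For $F_4$ they hold by \cite{BEZ20}. For the mode integrals I would check them mode by mode, since every mode product satisfies the relevant trace systems (Proposition~\ref{prop:opmodes}). In Region III I would confirm them by direct substitution of \eqref{eq:finite3}, using \eqref{eq:e-first}--\eqref{eq:trace-derivatives}; after $\varrho=e^{a_i}$ this reduces to rational identities.

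\textbf{(b) Internal interfaces.} Across $z_3=0$ the formula depends on $|z_3|$. I would show that the $z_3$-odd part of the first and third derivatives vanishes at $z_3=0$. This works because $\partial_d\Phi_t(d)^2\Phi_t(\cdot)$ has the right structure: the integrand in $d=|z_3|$ is $\Phi_t(z_1)\Phi_t(d)\Phi_t(z_2)$, whose $d$-derivative at $d=0$ is $-\coth t$ times the rest. Its cancellation against the four-expert part should follow from \eqref{eq:interface-system} being symmetric in $\ell$ and $q$. Equivalently, $C^1$ across $z_3=0$ is just the identity $\ell_Y=q_Y$ already derived there, and $C^2$ follows from \eqref{eq:fixed} and a second-derivative compatibility.

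Across $z_4=0$, equality of values was shown in Section~\ref{sec:region3}. I would then match $\partial_{a_1}F$ and $\partial_{a_1}^2F$ from \eqref{eq:finite3} at $a_1=0$ against $-\tfrac32\partial_{z_4}$ and its square applied to \eqref{eq:formula12}. The $z_4$-derivatives bring down factors $-2\coth t$ and $4\coth^2t$ under the integral. By \eqref{eq:mode-eigen} these become $2\Kop$ and $4\Kop^2$ acting on $g$, and together with the four-expert part they become $\Kop$-powers of $e$. These should match $b_1$, $b_2$ combined with $\sigma$-derivatives, via \eqref{eq:compat} and \eqref{eq:operator-identities}.

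This bookkeeping, especially the second transverse derivative at $z_4=0$ and at the corners where interfaces meet faces, is where I expect the work to be. I would reduce it to finitely many trace identities, then to rational identities in $\varrho$, and check those symbolically.
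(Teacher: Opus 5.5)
Your skeleton matches the paper: boundedness, $\nabla u\cdot\one=1$, \eqref{eq:fixed} in Regions I and II, and the reflection step (differentiating each face condition tangentially to kill the mixed normal--tangential Hessian entries). For the faces $y_1=0$, $y_2=0$, $y_3=0$ a mode-by-mode check also works; it is the paper's symmetry argument.

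\textbf{The face $y_4=0$ of Region I.} This is where your plan fails. There $-F_{y_3}+2F_{y_4}=0$ becomes \eqref{eq:bottom-face-direct}, $4F_{z_4}=F_{z_1}+F_d+F_{z_2}$ at $z_4=0$. It holds neither for $F_4$ nor for individual modes:
\begin{itemize}
\item $F_4$ is independent of $z_4$, yet at $d=z_2=0$ one finds $(\partial_{z_1}+\partial_d+\partial_{z_2})F_4=\theta(z_1)\sinh z_1-\tfrac12<0$.
\item A mode $e^{-2z_4c}\Phi_t(z_1)\Phi_t(d)\Phi_t(z_2)$, $c=\coth t$, would at the same point need $6c\,\Phi_t(z_1)=N_t(z_1)$ for every $t>z_1$, which is false.
\end{itemize}
Proposition~\ref{prop:opmodes} does not help, because in Regions I and II the systems \eqref{eq:m-system} encode only the faces $y_1,y_2,y_3=0$. ``By construction'' is exactly what the paper warns against. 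In Region I the derivation treats $R=y_4$ as a parameter and imposes \eqref{eq:bottom-traces} only on the edge $Y=0$, through \eqref{eq:interface-system}.

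The condition is a genuine identity linking the background to the particular density $p$. The paper expands the bracket in powers of $c$, replaces $c^j$ by the moments $M_j(z_1)$ of \eqref{eq:moments01}--\eqref{eq:momentrec}, and checks the resulting finite identity. A data-level argument could probably replace this: commute $\Sop_2(Y)$ with $\Lop_2$ and use \eqref{eq:compat} to get $\ell_Y=2\ell_R$ at $R=0$. That argument is not mode-by-mode, though, and you would have to justify it.

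\textbf{The interface $z_3=0$.} Your mechanism here is wrong. At fixed $y_4$ the formula is not a function of $|z_3|$: by \eqref{eq:coords12}, $z_4=y_4$ on the Region I side but $z_4=y_4-z_3$ on the Region II side. $F_4$ is analytic in signed $z_3$ and cancels nothing. The kink of $\Phi_t(|z_3|)$ is removed by the extra factor $e^{2cz_3}$. Up to the common factor $e^{-2y_4c}$, the two branches of a mode are $\cosh z_3+c\sinh z_3$ and $e^{2cz_3}(\cosh z_3-c\sinh z_3)$, as in \eqref{eq:first-join-factors}. They share value $1$, slope $c$ and curvature $1$, but differ at third order. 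So the third-order matching you propose fails, and it is not needed. The relation $\ell_Y=q_Y$ you invoke is also unavailable:
\begin{itemize}
\item only the sum $\ell_Y+q_Y$ was used;
\item by \eqref{eq:faces}, equality would force $F_{y_4}=-1$ on the edge;
\item the edge in any case says nothing about the interface $y_1=y_3>0$.
\end{itemize}

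\textbf{Smaller repairs.}
\begin{itemize}
\item In Region III the direction $b_{\v_*}$ is $\partial_{a_3}$ alone, by \eqref{eq:q3}. A product of three shifted modes is not annihilated by $\partial_s^2-1$. The fixed equation holds because every term of \eqref{eq:finite3} has degree one in $(\cosh a_3,\sinh a_3)$.
\item At $z_4=0$, $\partial_{a_1}$ with $(a_2,a_3,a_4)$ fixed is $\tfrac12(\partial_{z_1}+\partial_{z_2}+\partial_{z_3})-\tfrac32\partial_{z_4}$ by \eqref{eq:second-join-coordinates}, not $-\tfrac32\partial_{z_4}$. Its $\partial_{z_1}$ part moves the lower limit of the integral and contributes the atom $\tfrac14p(a_4)\Phi_{a_4}(a_2)\Phi_{a_4}(a_3)$ to the second derivative; see \eqref{eq:jet-recipe}.
\item You never show that the Hessian extends continuously to $z_1=0$ and $a_4=0$, where $\lambda$ is logarithmically singular. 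The paper uses dominated convergence with $\sinh t\,p(t)=\O(t^2)$ and the bound $\O(a_4|\log a_4|)$. These limits are what make the reflection and gluing argument valid at points with several ties.
\end{itemize}
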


The proof of Proposition \ref{prop:C2} contains many steps that occupy the rest of this section. Throughout we write $d=|z_3|$. In Region I, where
$d=-z_3=\tfrac12(y_3-y_1)$ and $z_4=y_4$, and in Region II, where
$d=z_3$ and $z_4=y_4-z_3$, the chain rule for the coordinates
\eqref{eq:four-coordinates}--\eqref{eq:coords12} gives
\begin{equation}\label{eq:chain-12}
\begin{aligned}
 \text{I:}\quad&\partial_{y_1}=\tfrac12(\partial_{z_1}+\partial_{z_2}-\partial_d),&
 \partial_{y_3}&=\tfrac12(\partial_{z_1}+\partial_{z_2}+\partial_d),\\
 \text{II:}\quad&\partial_{y_1}=\tfrac12(\partial_{z_1}+\partial_{z_2}+\partial_d-\partial_{z_4}),&
 \partial_{y_3}&=\tfrac12(\partial_{z_1}+\partial_{z_2}-\partial_d+\partial_{z_4}),
\end{aligned}
\end{equation}
with $\partial_{y_2}=\partial_{z_1}$ and $\partial_{y_4}=\partial_{z_4}$ in
both regions, and in Region III, by \eqref{eq:coords3},
\begin{equation}\label{eq:chain-III}
\begin{aligned}
 \partial_{y_1}&=\tfrac13(\partial_{a_1}+\partial_{a_2}+\partial_{a_3}+\partial_{a_4}),&
 \partial_{y_2}&=\partial_{a_4},\\
 \partial_{y_3}&=\tfrac13(-\partial_{a_1}-\partial_{a_2}+2\partial_{a_3}+2\partial_{a_4}),&
 \partial_{y_4}&=\tfrac13(-2\partial_{a_1}+\partial_{a_2}+\partial_{a_3}+\partial_{a_4}).
\end{aligned}
\end{equation}
In particular the direction $b_{\v_*}=(1,-1,1,0)$ of \eqref{eq:fixed} is
$\partial_{y_1}-\partial_{y_2}+\partial_{y_3}=\partial_{z_2}$ in Regions I
and II and $\partial_{a_3}$ in Region III, and the displayed formulas
satisfy the corresponding second-derivative equation immediately: every
term of \eqref{eq:finite3} has degree one in $(\cosh a_3,\sinh a_3)$, and
in \eqref{eq:formula12} both $F_4$ and $\Phi_t(z_2)$ satisfy $f''=f$ in
$z_2$. This proves \eqref{eq:fixed} in each regional interior.
One-variable
functions such as
$e$, $p$, $\theta$, $\lambda$, the moments below, and the scalar
certificates are written in a generic variable $L>0$, which stands for
$z_1$ in Regions I and II and for $a_4$ in Region III. We abbreviate
$S=\sinh L$ and $C=\cosh L$.

The density $p$ enters every estimate below through the following bounds.
\begin{lemma}\label{lem:p-bounds}
For $L>0$,
\begin{equation}\label{eq:p-bounds}
 \frac1{14}\tanh L\,\sech^2L\leq p(L)
 \leq\frac18\tanh L\,\sech^2L.
\end{equation}
Moreover $p(L)\sim L/14$ at zero and $p(L)\sim e^{-2L}/2$ at infinity.
\end{lemma}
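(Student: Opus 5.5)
We work from the integral representation \eqref{eq:density}, $p(L)=\frac1{2\sinh^6L}\int_0^L\sinh^6t\,\sech^2t\dd$, and compare $p$ with the two functions $w_c(L)=c\tanh L\,\sech^2L$, $c\in\{\tfrac1{14},\tfrac18\}$. The plan is to use the first-order equation \eqref{eq:p-ode}, $p'+6\coth L\,p=\tfrac12\sech^2L$, together with a comparison principle. Multiplied by the integrating factor $\sinh^6L$, this equation shows that for any smooth $w$ with $\sinh^6L\,w(L)\to0$ as $L\to0$ we have
\[
 \sinh^6L\,(p-w)(L)=\int_0^L\sinh^6t\,\bigl(\tfrac12\sech^2t-w'(t)-6\coth t\,w(t)\bigr)\dd .
\]
It therefore suffices to show that the residual $\rho_c=\tfrac12\sech^2L-w_c'-6\coth L\,w_c$ is $\ge0$ for $c=\tfrac1{14}$ and $\le0$ for $c=\tfrac18$ on $(0,\infty)$.

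Compute the residual with $S=\sinh L$, $C=\cosh L$. We have $w_c=cS/C^3$, so $w_c'=c(C^2-3S^2)/C^4$ and $6\coth L\,w_c=6c/C^2$. Hence
\[
 \rho_c=\frac{1}{C^4}\Bigl(\tfrac12C^2-c(C^2-3S^2)-6cC^2\Bigr)
 =\frac{1}{C^4}\Bigl((\tfrac12-7c)C^2+3cS^2\Bigr).
\]
For $c=\tfrac1{14}$ this is $\tfrac{3}{14}S^2/C^4\ge0$, which gives the lower bound. For $c=\tfrac18$ it is $(-\tfrac38C^2+\tfrac38S^2)/C^4=-\tfrac38\sech^4L<0$, which gives the upper bound. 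The boundary term is harmless, since $\sinh^6L\,w_c\to0$.

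It remains to check that this pointwise sign of the residual transfers correctly through the integral. It does, because $\sinh^6t>0$. In fact the inequalities are strict for $L>0$.

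For the asymptotics at zero, expand $\sinh^6t\,\sech^2t=t^6+\O(t^8)$. This gives $\int_0^L=L^7/7+\O(L^9)$, so $p\sim L^7/(14L^6)=L/14$. At infinity, the integrand is $\sim e^{4t}/16$, so the integral is $\sim e^{4L}/64$, while $\sinh^6L\sim e^{6L}/64$. Together with the factor $\tfrac12$ this gives $p\sim\tfrac12e^{-2L}$. The main (mild) obstacle is simply choosing the right comparison functions; the residual identity above makes the constants $\tfrac1{14}$ and $\tfrac18$ appear exactly, and both are sharp, matching the limits at $0$ and $\infty$ respectively ($\tfrac18\tanh L\sech^2L\sim\tfrac12e^{-2L}$).
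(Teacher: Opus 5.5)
Your proposal is correct and follows essentially the same route as the paper. The paper applies $\partial_L+6\coth L$ to the two barriers $\tfrac1{14}\tanh L\,\sech^2L$ and $\tfrac18\tanh L\,\sech^2L$ and subtracts $\tfrac12\sech^2L$, obtaining $-\tfrac3{14}\tanh^2L\,\sech^2L$ and $\tfrac38\sech^4L$; these are the negatives of your residuals. It then integrates against $\sinh^6L$ from zero and reads off the asymptotics from \eqref{eq:density} exactly as you do.
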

\begin{proof}
We apply $\partial_L+6\coth L$ to each of the proposed barriers and subtract
the right side $\tfrac12\sech^2L$ of \eqref{eq:p-ode}. For the lower and
upper barriers, respectively, this gives
\[
 -\tfrac3{14}\tanh^2L\,\sech^2L\leq0
 \qquad\text{and}\qquad
 \tfrac38\sech^4L\geq0.
\]
Multiplying by the positive integrating factor $\sinh^6L$ and integrating
from zero proves the two bounds. The asymptotic statements follow from
\eqref{eq:density}. Near zero the integrand is $t^6+\O(t^8)$, so
\[
 p(L)=\frac{L^7/7+\O(L^9)}{2L^6+\O(L^8)}\sim\frac L{14}.
\]
At infinity
$\sinh^6t\,\sech^2t\sim e^{4t}/16$, so the integral is asymptotic to
$e^{4L}/64$ while $2\sinh^6L\sim e^{6L}/32$, and $p(L)\sim e^{-2L}/2$.
In particular the lower bound in \eqref{eq:p-bounds} is sharp at zero and
the upper bound is sharp at infinity.
\end{proof}
We next verify the four boundary conditions \eqref{eq:faces} on the faces
of the ordered sector directly from the formula.
Section~\ref{sec:derivation} used these conditions only through the trace
systems \eqref{eq:left-system}--\eqref{eq:bottom-traces}, which were solved as
Cauchy problems from data on an edge, imposing the remaining conditions only
on that edge; it also assumed growth conditions at infinity and smooth
traces. Since each equation in \eqref{eq:faces} is the condition
$\partial_nu=0$ on a permutation hyperplane, \eqref{eq:faces} is part of
the $C^2$ claim of Theorem~\ref{thm:main} and must be established for the
final formulas. We use \eqref{eq:chain-12} to express each face operator in
the regional coordinates.
The four-expert term of \eqref{eq:formula12} is $F_4(z_1,z_2,-d)$ in
Region I and $F_4(z_1,z_2,d)$ in Region II, with $F_4$ from
\eqref{eq:four-background}; we write $F_4^\mathrm I$ and $F_4^\mathrm{II}$
for these. The correction in \eqref{eq:formula12} is
symmetric in $d,z_2$. On $y_1=0$ in Region I, $d=z_2$ and
$2\partial_{y_1}-\partial_{y_2}=\partial_{z_2}-\partial_d$, which
annihilates every function symmetric in $(d,z_2)$ on the diagonal $d=z_2$.
The only asymmetric term of $F$ is $-\tfrac12\sinh(z_2-d)$ in
$F_4^\mathrm I$, and
$(\partial_{z_2}-\partial_d)\bigl(-\tfrac12\sinh(z_2-d)\bigr)=-\cosh(z_2-d)=-1$
at $d=z_2$, which is the required value.
On $y_2=0$ in Regions I and II, $z_1=z_2$ and the face operator
$-\partial_{y_1}+2\partial_{y_2}-\partial_{y_3}$ is
$\partial_{z_1}-\partial_{z_2}$, the $\partial_d$ and $\partial_{z_4}$
terms cancelling. The background satisfies this identity by its face
condition \eqref{eq:four-faces}, and the correction does too by symmetry of
$\Phi_t(z_1)\Phi_t(z_2)$: its moving-endpoint term is zero since
$\Phi_{z_1}(z_1)=0$. On $y_3=0$ in Region II, $d=z_2$ and the operator
$-\partial_{y_2}+2\partial_{y_3}-\partial_{y_4}$ is
$\partial_{z_2}-\partial_d$, so both terms vanish by symmetry.

It remains to check the boundary condition on $y_4=0$ in Region I.
Here $z_4=0$, and by \eqref{eq:chain-12} the condition
$-F_{y_3}+2F_{y_4}=0$ reads
\begin{equation}\label{eq:bottom-face-direct}
 4F_{z_4}=F_{z_1}+F_d+F_{z_2},\qquad 0\leq d\leq z_2\leq z_1.
\end{equation}
Put $c=\coth t$, $P=\Phi_t(z_1)\Phi_t(d)\Phi_t(z_2)$, and
$N_t(a)=c\cosh a-\sinh a=-\partial_a\Phi_t(a)$. Since $F_4^\mathrm I$
does not depend on $z_4$, while in the correction $\partial_{z_4}$
produces the factor $-2c$, the tangential derivatives act on $P$ through
$\partial_a\Phi_t(a)=-N_t(a)$, and the moving lower limit contributes
nothing because $\Phi_{z_1}(z_1)=0$, condition
\eqref{eq:bottom-face-direct} at $z_4=0$ is equivalent to
\begin{align*}
 (\partial_{z_1}+\partial_d+\partial_{z_2})F_4^\mathrm I
 =\int_{z_1}^\infty\sinh t\,p(t)\bigl[&-8cP
       +N_t(z_1)\Phi_t(d)\Phi_t(z_2)\\
       &+\Phi_t(z_1)N_t(d)\Phi_t(z_2)
       +\Phi_t(z_1)\Phi_t(d)N_t(z_2)\bigr]\dd.
\end{align*}
The bracket is a polynomial in $c$ of degree at most four. We replace $c^j$
by the moments $M_j(z_1)$ from \eqref{eq:moments01}--\eqref{eq:momentrec},
for $0\leq j\leq4$. Substituting the moment formulas gives the left side exactly; no
additional relation between $e$ and $p$ is needed. We verify this finite
symbolic identity in \path{checks/faces.py} in the proof supplement.

In Region III the face operators on $y_2=0$, $y_3=0$, and $y_4=0$ are,
respectively, $\partial_{a_4}-\partial_{a_3}$ at $a_3=a_4$,
$\partial_{a_3}-\partial_{a_2}$ at $a_2=a_3$, and
$\partial_{a_2}-\partial_{a_1}$ at $a_1=a_2$.
The last two identities follow from symmetry of \eqref{eq:finite3}.
The first follows by substituting \eqref{eq:coefficients} and the trace
ODE into the derivative of that finite formula; it is checked in the same
audit. These direct checks cover every relatively open physical face.
Their intersections follow from the continuity of the value, gradient, and
Hessian up to the boundary of the sector, established below.

We now show that the correction to the payoff is bounded. This follows
directly from the integral representations. The four-expert correction is
bounded, and the correction integral in \eqref{eq:formula12} is bounded above
by $\int_0^\infty\sinh t\,p(t)\dd<\infty$. In \eqref{eq:positive3},
$0\leq A\leq1$, every $\Phi_t(a_i)\leq1$, and $\sinh a_i\leq\sinh a_4$.
Also
\[
 2\sinh^2a_4\int_{a_4}^\infty\frac{\dd}{\sinh^3t}
 \leq 2\sinh^2a_4\int_{a_4}^\infty\frac{\cosh t}{\sinh^3t}\dd=1.
\]
There are six pairs in the kernel, so $0\leq F\leq7\|e\|_\infty$ in
Region III. In particular $u-\phi$ is bounded throughout the sector.
This is the one place where Section~\ref{sec:verification} uses
Section~\ref{sec:derivation} beyond the definition of $e$: the upper
bound in Region III rests on the equality of \eqref{eq:finite3} with
\eqref{eq:positive3}, established in Section~\ref{sec:region3} by the
passage from compactly supported data to $e$, and it is not visible from
\eqref{eq:finite3} alone, whose products $\sigma_j$ grow like
$e^{a_1+a_2+a_3}$. The lower bound $F\geq0$ in Region III also follows
from Section~\ref{sec:H3}: the gap of the control $00000$ is $F$ itself,
and its vertex values $e$, $C_1+3T_1+4D_1$, $3(A_2+B_2+T_2)$, and
$3T_3+6P_3$ in the tables there are nonnegative.

The formula in each region is smooth for $z_1>0$, respectively $a_4>0$, up to its relative boundary.
We show next that the values and derivatives agree across the two interfaces.

At the first interface keep $(z_1,z_2,R)$ fixed and vary the signed
coordinate $z_3=(y_1-y_3)/2$. The
four-expert term \eqref{eq:four-background} is already smooth in $z_3$.
For a fixed $c=\coth t$, the only changing factor in the correction is
$e^{-2z_4c}\Phi_t(|z_3|)$, where $z_4=R$ in Region I and $z_4=R-z_3$ in
Region II, and $\Phi_t(\mp z_3)=\cosh z_3\pm c\sinh z_3$ by the addition
formula; that is,
\begin{equation}\label{eq:first-join-factors}
 e^{-2Rc}
 \begin{cases}
  \cosh z_3+c\sinh z_3,&z_3\leq0,\\
  e^{2cz_3}(\cosh z_3-c\sinh z_3),&z_3\geq0.
 \end{cases}
\end{equation}
Both branches have value $1$, first derivative $c$, and second derivative
$1$ at $z_3=0$: for the second branch the derivative is
$e^{2cz_3}\bigl[c\cosh z_3+(1-2c^2)\sinh z_3\bigr]$, whose value and
derivative at zero are $c$ and $2c^2+(1-2c^2)=1$. Since the integration
endpoint $z_1$ is fixed, this proves
equality of the value and the first two transverse derivatives as functions
of the tangential variables. These three identities hold at every point of
the interface, so they may be differentiated with respect to $z_1$, $z_2$,
and $R$; this yields the tangential first derivatives and the
tangential--tangential and mixed second derivatives, and hence the value,
gradient, and Hessian of the two regional formulas agree at $y_1=y_3$.

For the second interface use the Region III coordinates on both sides and
write $\tau=a_1$, keeping $(a_2,a_3,a_4)$ fixed. By the relations
$a_1=-\tfrac23z_4$ and $a_i=z_{5-i}+\tfrac13z_4$ recorded after
\eqref{eq:inverse3}, the Region II coordinates become
\begin{equation}\label{eq:second-join-coordinates}
 (z_1,z_2,z_3,z_4)=(a_4+\tau/2,a_3+\tau/2,a_2+\tau/2,-3\tau/2),
\end{equation}
so that $\partial_\tau z_i=\tfrac12$ for $i\leq3$ and
$\partial_\tau z_4=-\tfrac32$.
With $s_i=\sinh a_i$, $c_i=\cosh a_i$, define
\begin{align*}
 B_0&=b_0c_2c_3+b_1(s_2c_3+c_2s_3)+b_2s_2s_3,\\
 B_1&=b_1c_2c_3+b_2(s_2c_3+c_2s_3)+b_3s_2s_3.
\end{align*}
Formula \eqref{eq:finite3} is $F=\cosh\tau B_0+\sinh\tau B_1$ on
the Region III side. On the Region II side, direct differentiation gives
\begin{equation}\label{eq:second-join-jets}
 \left.(F,F_\tau,F_{\tau\tau})\right|_{\tau=0^-}=(B_0,B_1,B_0).
\end{equation}
To verify this identity, we must account for the moving endpoint of the
integral. We set $P=\Phi_t(a_4)\Phi_t(a_2)\Phi_t(a_3)$ and
\[
 Q_0=\tfrac12(\partial_{a_4}+\partial_{a_2}+\partial_{a_3}),
 \qquad Q_c=Q_0+3c.
\]
By \eqref{eq:second-join-coordinates}, $\partial_\tau$ acts as $Q_0$ on
$F_4(z_1,z_2,z_3)$. The integrand of \eqref{eq:formula12} is
$\sinh t\,p(t)e^{-2z_4c}$ times $\Phi_t(z_1)\Phi_t(z_3)\Phi_t(z_2)$, and
since $\partial_\tau e^{-2z_4c}=3ce^{-2z_4c}$, the operator $\partial_\tau$
acts on it as $Q_c$. The lower limit $z_1=a_4+\tau/2$
moves as well. Its first derivative contributes nothing, because the
integrand vanishes at $t=z_1$; its second derivative contributes
$-\tfrac12\,\partial_\tau(\text{integrand})|_{t=z_1}$, where only the
factor $\Phi_t(z_1)$ has a nonzero derivative, namely
$\partial_\tau\Phi_t(z_1)=\tfrac12\Phi_t'(z_1)$, which equals
$-1/(2\sinh t)$ at $t=z_1$. For $j=0,1,2$ the left side of \eqref{eq:second-join-jets} is
therefore
\begin{equation}\label{eq:jet-recipe}
 Q_0^jF_4+\int_{a_4}^\infty\sinh t\,p(t)Q_c^jP\dd
 +\one_{\{j=2\}}\frac{p(a_4)}4\Phi_{a_4}(a_2)\Phi_{a_4}(a_3),
\end{equation}
the last term being
$(-\tfrac12)\cdot\sinh a_4\,p(a_4)\cdot(-\tfrac1{2\sinh a_4})\Phi_{a_4}(a_2)\Phi_{a_4}(a_3)$
at $\tau=0$, where $z_4=0$. Expand $Q_c^jP$ as a polynomial in $c$ and use the moments
\eqref{eq:moments01}--\eqref{eq:momentrec} below. Substitution of
\eqref{eq:p-elementary} and \eqref{eq:trace-derivatives} gives respectively
$B_0,B_1,B_0$, coefficient by coefficient in
$c_2c_3,s_2c_3,c_2s_3,s_2s_3$.
The symbolic audit also checks this identity directly.
The value and first two transverse derivatives agree as functions of all
three tangential variables, and tangential differentiation, as at the
first interface, gives equality of the value, gradient, and Hessian of
the Region II and Region III formulas on the second interface.

It remains to show that the derivatives extend continuously to $z_1=0$ in Regions I and II and to $a_4=0$ in Region III. In Regions I and II, Lemma~\ref{lem:p-bounds} gives
\[
 \sinh t\,p(t)=\O(t^2)\quad(t\downarrow0),\qquad
 \sinh t\,p(t)=\O(e^{-t})\quad(t\to\infty).
\]
For $0\leq a\leq z_1\leq t$, we have $0\leq\Phi_t(a)\leq1$ and
$|\partial_a\Phi_t(a)|\leq\coth t$. Every derivative of order at most two
of the integrand in \eqref{eq:formula12} is therefore bounded by
\[
 C_0\sinh t\,p(t)(1+\coth^2t),
\]
uniformly on the physical domain, since $z_4\geq0$. This is integrable both
at zero and infinity. The first endpoint term vanishes because $\Phi_{z_1}(z_1)=0$;
the second endpoint terms are $\O(p(z_1))=\O(z_1)$. Dominated convergence proves
continuous limits for the function, gradient, and Hessian at $z_1=0$, including
the top-four collision with arbitrary $y_4\geq0$.
The four-expert term has the same property: its only nonanalytic factor at
zero is $\lambda(z_1)\sinh z_1\sinh z_2\sinh z_3$, whose second derivatives tend to zero.

In Region III the trace $e$ is analytic at zero. The nonanalytic part of
\eqref{eq:finite3} is a linear combination of
\[
 \lambda(a_4)\sinh a_4\sum_{i<j}s_is_jc_h,
 \qquad \lambda(a_4)\cosh a_4\,s_1s_2s_3,
\]
where $h$ is the remaining index. On $0\leq a_i\leq a_4$ these terms are
$\O(a_4^3|\log a_4|)$ and all their second partial derivatives are
$\O(a_4|\log a_4|)$. The remaining terms are analytic. The value, gradient,
and Hessian therefore have unique limits at the origin in each region. The
interface identities prove that those limits agree: approach the origin
along either common interface, where the value, gradient, and Hessian of
the two regional formulas have already been shown to agree.

To pass from these one-sided identities to $C^2$ regularity we use the
following gluing lemma, whose proof is standard and given in Appendix~\ref{sec:appendix-gluing}.
\begin{lemma}\label{lem:gluing}
Let $\Omega\subset\R^n$ be open and convex, let $K_1,\dots,K_N$ be closed
convex sets with nonempty interiors whose union contains $\Omega$, and let
$u\colon\Omega\to\R$ be continuous. Suppose that for each $j$ the
restriction of $u$ to the intersection of $\Omega$ with the interior of
$K_j$ is $C^2$, that its gradient and Hessian extend continuously to
$\Omega\cap K_j$, and that at every point of $\Omega$ the extended
gradients and Hessians of all the $K_j$ containing that point agree. Then
$u\in C^2(\Omega)$.
\end{lemma}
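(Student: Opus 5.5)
The plan is to reduce $C^2$ regularity to a statement about first and second difference quotients along segments, and then use convexity of $\Omega$ to move between the pieces. I would argue in two stages: first $C^1$, then apply the same argument to each partial derivative $\partial_i u$.

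\textbf{Step 1: a continuous global gradient and Hessian.} For $x\in\Omega$, let $g(x)$ and $H(x)$ be the common extended gradient and Hessian of all $K_j$ containing $x$. These are well defined by hypothesis. To see that $g$ and $H$ are continuous on $\Omega$, take a sequence $x_m\to x$. Pass to subsequences lying in a single $K_j$, which is possible because there are finitely many pieces. Each such $K_j$ is closed, so it contains $x$, and continuity of the extension on $\Omega\cap K_j$ gives the result.

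\textbf{Step 2: $u\in C^1$ with $\nabla u=g$.} Fix $x,y\in\Omega$ and the segment $\gamma(s)=x+s(y-x)$, which lies in $\Omega$ by convexity. I claim that $s\mapsto u(\gamma(s))$ is $C^1$ with derivative $g(\gamma(s))\cdot(y-x)$.

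For each $j$, the set $\{s:\gamma(s)\in K_j\}$ is a closed interval, since it is the intersection of the line with a convex closed set. So $[0,1]$ is a finite union of closed intervals. On each such interval, $u\circ\gamma$ has the claimed one-sided derivative. This needs justification when the segment runs inside $\partial K_j$, away from the interior. Here I would use the standard fact that for a convex body, every point is a limit of interior points along segments toward an interior point $x_0$. Approximating $\gamma$ by segments $\gamma_\epsilon$ pushed toward interior points, the fundamental theorem of calculus on $\mathrm{int}K_j$ gives
\[
u(\gamma_\epsilon(b))-u(\gamma_\epsilon(a))=\int_a^b g(\gamma_\epsilon)\cdot\gamma_\epsilon'\,ds .
\]
Letting $\epsilon\to0$, using continuity of $u$ and $g$ from Step 1, yields the same identity on $\gamma$ itself.

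Concatenating over the finitely many intervals, the identity $u(y)-u(x)=\int_0^1 g(\gamma(s))\cdot(y-x)\,ds$ holds on all of $\Omega$. Continuity of $g$ then gives differentiability with $\nabla u=g$, and hence $u\in C^1$.

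\textbf{Step 3: second derivatives.} Apply Step 2 to each component $g_i=\partial_i u$. It is continuous on $\Omega$, it is $C^1$ on each $\mathrm{int}K_j$ with gradient the $i$th row of the Hessian, and these gradients extend continuously and agree, with the common value being the row of $H$. This gives $g_i\in C^1$ with $\nabla g_i=H_{i\cdot}$, so $u\in C^2(\Omega)$.

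The main obstacle is the boundary-segment issue in Step 2. A segment may lie entirely in $\partial K_j\cap\Omega$ with lower-dimensional contact, or it may pass through points covered only by boundaries of several pieces. The interior-approximation argument handles both cases. It relies on convexity, which provides nonempty interiors and the radial segments, and on the global continuity of $g$ established in Step 1.
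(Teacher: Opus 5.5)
Your proposal is correct and follows essentially the same route as the paper's appendix proof. Both arguments get continuity of the common extended gradient and Hessian from finitely many relatively closed pieces. Both derive the segment identity $u(y)-u(x)=\int_0^1 g(x+s(y-x))\cdot(y-x)\,ds$ by partitioning the segment by the convex pieces and pushing boundary subsegments toward an interior point, and then repeat the argument for $g$. One detail is worth stating explicitly: the interior point must be chosen in $\Omega\cap\mathrm{int}\,K_j$. It exists because $\Omega$ is open and meets $K_j$, in which $\mathrm{int}\,K_j$ is dense, and it keeps the pushed segments inside $\Omega$ by convexity of $\Omega$, where $u$ is actually $C^2$.
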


We first apply the lemma inside the sector. Let $S$ be the closed ordered
sector in $\R^5$, write $\bar u$ for the function defined on $S$ by the
formula of Theorem~\ref{thm:main}, and take $\Omega$ to be the interior of
$S$, where all $y_i>0$, and $K_1,K_2,K_3$ the closures of Regions I, II,
and III, which are closed convex cones. Inside each region the formula is
smooth, as noted before the interface computations, and its gradient and
Hessian extend continuously to the closed region: in Regions I and II by
the dominated convergence bound just established, which is uniform on the
physical domain, together with the $C^2$ regularity of $F_4$, and in
Region III because \eqref{eq:finite3} is analytic for $a_4>0$ and has the
limits at the origin found above. A point of the open sector lying in two
regions lies on an interface, and a point lying in all three lies on both
interfaces, at $z_3=z_4=0$; the interface identities give the agreement
of the extended gradients and Hessians at such points, and the agreement
of the values makes $\bar u$ continuous. Lemma~\ref{lem:gluing} therefore
shows that $\bar u$ is $C^2$ in the interior of $S$. Moreover
$\nabla\bar u$ and $\nabla^2\bar u$ extend continuously to $S$: at
boundary points with $z_1>0$ by the extensions from the closed regions,
and at $z_1=0$ and at the origin by the limits established above, where
the extensions from different regions agree by continuity, since they
agree on the interfaces.

We now extend $\bar u$ to $\R^5$ by permutation invariance. For
$x\in\R^5$ let $x^\downarrow$ be the vector obtained by sorting the
coordinates of $x$ in decreasing order, and set
$u(x)=\bar u(x^\downarrow)$. This is well defined and continuous, since
the $k$th coordinate of $x^\downarrow$, the $k$th largest coordinate of
$x$, is a continuous function of $x$. Let each permutation $\sigma$ of
$\{1,\dots,5\}$ act on $\R^5$ by permuting coordinates, and write $\sigma$
also for the corresponding orthogonal matrix. The sectors $\sigma(S)$ are
closed convex cones with nonempty interiors covering $\R^5$, and on
$\sigma(S)$ we have $u=\bar u\circ\sigma^{-1}$, whose gradient and Hessian
are $\sigma\,\nabla\bar u\circ\sigma^{-1}$ and
$\sigma\,\nabla^2\bar u\circ\sigma^{-1}\,\sigma^T$ in the interior and
extend continuously to $\sigma(S)$. To apply Lemma~\ref{lem:gluing} with
$\Omega=\R^5$ it remains to check the agreement hypothesis, and by
permutation invariance it suffices to do so at points $x\in S$. The
sectors containing $x$ are exactly the $\sigma(S)$ with $\sigma x=x$:
indeed $x\in\sigma(S)$ means that $\sigma^{-1}x$ is sorted in decreasing
order, that is, $\sigma^{-1}x=x^\downarrow=x$. On such a sector the
extended gradient and Hessian at $x$ are $\sigma\nabla\bar u(x)$ and
$\sigma\nabla^2\bar u(x)\sigma^T$, so the agreement hypothesis at $x$
states that $\nabla\bar u(x)$ and $\nabla^2\bar u(x)$ are invariant under
every $\sigma$ with $\sigma x=x$. These permutations preserve each block
of tied coordinates of $x$, and they are generated by the transpositions
$\sigma_i$ of $i$ and $i+1$ over the indices $i$ with $x_i=x_{i+1}$, so
it suffices to check invariance under these. Fix such an $i$ and let
$\nu\in\R^5$ be the unit vector with $\nu_i=1/\sqrt2$,
$\nu_{i+1}=-1/\sqrt2$, and all other components zero, so that
$\sigma_i=\mathrm{Id}-2\nu\nu^T$ is the reflection across the hyperplane
$x_i=x_{i+1}$. The face condition in \eqref{eq:faces} on $y_i=0$ is
$u_{x_i}=u_{x_{i+1}}$, that is, $\nu\cdot\nabla\bar u=0$. It was verified
on the relatively open face of $S$ in this hyperplane, where all the other
inequalities are strict, and it holds on the closed face by continuity of
$\nabla\bar u$. The relatively open face is an open subset of the
hyperplane, so the identity may be differentiated there in every direction
$t$ orthogonal to $\nu$, giving $t^T\nabla^2\bar u\,\nu=0$, which again
extends to the closed face by continuity. Thus at $x$ the vector $\nu$ is
orthogonal to $\nabla\bar u(x)$ and is an eigenvector of
$\nabla^2\bar u(x)$, say $\nabla^2\bar u(x)\nu=\gamma\nu$. Hence
$\sigma_i\nabla\bar u(x)=\nabla\bar u(x)-2\bigl(\nu\cdot\nabla\bar u(x)\bigr)\nu
=\nabla\bar u(x)$ and
\[
 \sigma_i\nabla^2\bar u(x)\sigma_i
 =\nabla^2\bar u(x)-2\nu\nu^T\nabla^2\bar u(x)-2\nabla^2\bar u(x)\nu\nu^T
  +4\nu\nu^T\nabla^2\bar u(x)\nu\nu^T=\nabla^2\bar u(x),
\]
since the last three terms equal $-2\gamma\nu\nu^T$, $-2\gamma\nu\nu^T$,
and $4\gamma\nu\nu^T$. This verifies the agreement hypothesis, and
Lemma~\ref{lem:gluing} gives $u\in C^2(\R^5)$. This completes the proof
of Proposition~\ref{prop:C2}.

\subsection{The Hamiltonian inequalities in Region III}\label{sec:H3}

It remains to prove $D_\v^2u\leq D_{\v_*}^2u$ for every binary control.
We treat Region III first, because the finite formula \eqref{eq:finite3}
is affine in each $\tanh a_i$, $i\leq3$, so the inequalities there reduce
by algebra alone to finitely many scalar inequalities in $L$. In Regions I
and II the fifth expert enters \eqref{eq:formula12} through the factor
$e^{-2z_4\coth t}$ inside the integral, and each gap is a Laplace transform
in $2z_4$; the reduction to scalar inequalities then needs the sign
principles of Section~\ref{sec:H12}. 

Translation invariance gives $\nabla^2u\one=0$, so complementary controls have equal curvature;
we use the sixteen representatives with $v_1=0$.
In the coordinates \eqref{eq:coords3}, define
\begin{equation}\label{eq:q3}
 q_\v=\frac13
 \begin{pmatrix}1&0&-1&-2\\1&0&-1&1\\1&0&2&1\\1&3&2&1\end{pmatrix}b_\v,
\end{equation}
where the matrix is the Jacobian $\partial a/\partial y$ of
\eqref{eq:coords3}, so that $b_\v^T\nabla_y^2Fb_\v=q_\v^T\nabla_a^2Fq_\v$
in \eqref{eq:direction-scaling}; the direction $b_{\v_*}=(1,-1,1,0)$ is
sent to $(0,0,1,0)$, and $D_{\v_*}^2u=(2/k)F$ by \eqref{eq:fixed}. The
physical curvature gap $D_{\v_*}^2u-D_\v^2u$ is therefore
$(2/k)(F-q_\v^T\nabla_a^2Fq_\v)$.
Every term of \eqref{eq:finite3} has degree one in each pair
$(\cosh a_i,\sinh a_i)$, $i\leq3$, and differentiation preserves this, so
after division by $\prod_{i=1}^3\cosh a_i$ the gap is affine separately
in each of $\tanh a_i$. Recall that $L$ denotes $a_4$ in Region III, and
put $\xi_i=\tanh a_i$ for $i\leq3$. The closed Region III is then the simplex
$0\leq\xi_1\leq\xi_2\leq\xi_3\leq\tanh L$, and the normalized gap is
affine in each $\xi_i$ separately. A function affine in each variable
separately on the cube $[0,\tanh L]^3$ is a convex combination of its
values at the eight vertices of the cube, so it suffices to prove that the
normalized gap is nonnegative at these vertices, where every
$a_i\in\{0,L\}$. Permutations of $a_1,a_2,a_3$ leave \eqref{eq:finite3}
invariant and permute $v_3,v_4,v_5$, since they correspond to permutations
of experts 3, 4, and 5; they map the gap for one control at a vertex to
the gap for the permuted control at the permuted vertex. It therefore
suffices to consider, for all sixteen controls, the four vertices
\[
 \omega_0=(0,0,0),\qquad \omega_1=(0,0,L),\qquad \omega_2=(0,L,L),\qquad
 \omega_3=(L,L,L)
\]
of the simplex, where $\omega_j$ has $j$ coordinates equal to $L$. At
$(a_1,a_2,a_3)=\omega_j$ the normalized gap is a function of $a_4=L$
alone, through the coefficients \eqref{eq:coefficients} and the
derivatives in the direction $a_4$; we write
\begin{equation}\label{eq:Gj}
 G_j^\v(L)=\left.
 \frac{F-q_\v^T\nabla_a^2Fq_\v}{\cosh a_1\cosh a_2\cosh a_3}
 \right|_{(a_1,a_2,a_3)=\omega_j},\qquad j=0,1,2,3.
\end{equation}

For $j=0$, we obtain the following nonzero expressions. The gap is zero
for every control omitted from the table.
\begin{center}
\begin{tabular}{ll}\toprule
Controls & $G_0^\v$\\\midrule
$00000$ & $e$\\
$00001,00010,00100$ & $e/3+2\eta/21+2S\lambda/7$\\
$00011,00101,00110$ & $(3\eta+2S\lambda)/7$\\
$00111$ & $\eta$\\
$01000,01111$ & $(7e+5\eta-6S\lambda)/21$\\\bottomrule
\end{tabular}
\end{center}
Only $V=7e+5\eta-6S\lambda$ needs more than positivity of the displayed
factors. Differentiate with \eqref{eq:e-first} for $e'$, \eqref{eq:eta}
for $\eta'$, and $(S\lambda)'=C\lambda-1$; then eliminate $I$ through
$\eta=3\coth L-15I/\sinh^6L$ and $\eta$ through
$p=\tfrac12\tanh L-\eta$. This gives
\[
 V'-\tanh L\,V
 =\tfrac95\sech^2L-6\sech L\,\lambda
   +(30\coth L+\tfrac{18}5\tanh L)p.
\]
Using the upper bound $p\leq\tanh L\sech^2L/8$ of Lemma~\ref{lem:p-bounds}
in the last term and $\lambda=\atanh(\sech L)\geq\sech L$ in the second,
\[
 V'-\tanh L\,V\leq
 \bigl(\tfrac95+\tfrac{15}4-6\bigr)\sech^2L+\tfrac9{20}\tanh^2L\,\sech^2L
 =-\tfrac9{20}\sech^4L<0,
\]
since $\tfrac95+\tfrac{15}4-6=-\tfrac9{20}$ and $\tanh^2L=1-\sech^2L$.
Since $V/\cosh L\to0$, integration from infinity proves
$V\geq(9/20)\cosh L\int_L^\infty\sech^5t\dd>0$.

For the other vertices define eight generators:
\begin{equation}\label{eq:eight-generators}
\begin{aligned}
 C_1&=G_1^{00011},&D_1&=G_1^{00101},&T_1&=G_1^{01111},\\
 A_2&=G_2^{00011},&B_2&=G_2^{00110},&T_2&=G_2^{01111},\\
 P_3&=G_3^{00011},&&&T_3&=G_3^{01111}.
\end{aligned}
\end{equation}
The following table lists all controls. We obtain each entry by differentiating \eqref{eq:finite3} and using
\eqref{eq:e-ode}; all entries are exact identities.
\begin{center}\small
\begin{tabular}{llll}\toprule
$\v$ & $G_1^\v$ & $G_2^\v$ & $G_3^\v$\\\midrule
00000 & $C_1+3T_1+4D_1$ & $3(A_2+B_2+T_2)$ & $3T_3+6P_3$\\
00001 & $C_1+T_1+2D_1$ & $3A_2+T_2$ & $T_3+3P_3$\\
00010 & $C_1+T_1+2D_1$ & $A_2+2B_2+T_2$ & $T_3+3P_3$\\
00011 & $C_1$ & $A_2$ & $P_3$\\
00100 & $T_1+2D_1$ & $A_2+2B_2+T_2$ & $T_3+3P_3$\\
00101 & $D_1$ & $A_2$ & $P_3$\\
00110 & $D_1$ & $B_2$ & $P_3$\\
00111 & $0$ & $0$ & $0$\\
01000 & $T_1+2D_1$ & $A_2+2B_2+T_2$ & $T_3+3P_3$\\
01001 & $D_1$ & $A_2$ & $P_3$\\
01010 & $D_1$ & $B_2$ & $P_3$\\
01011 & $0$ & $0$ & $0$\\
01100 & $0$ & $B_2$ & $P_3$\\
01101 & $0$ & $0$ & $0$\\
01110 & $0$ & $0$ & $0$\\
01111 & $T_1$ & $T_2$ & $T_3$\\\bottomrule
\end{tabular}
\end{center}
It therefore suffices to prove nonnegativity of the eight generators.

Substitute \eqref{eq:e-first} into a generator and write $G=a(L)e(L)+b(L)$.
The coefficient $a$ is rational in $\varrho=e^L$; $b$ is rational in $\varrho$ and
linear in $L,\lambda$. For $h=a\cosh L$ the trace cancels from
\begin{equation}\label{eq:first-scalar}
 R=\left(\frac Gh\right)'
   =\left(\frac b{a\cosh L}\right)'
     -\frac{3I}{\cosh^2L\sinh^5L},
\end{equation}
because $G/h=e/\cosh L+b/(a\cosh L)$ and, by \eqref{eq:e-first},
\[
 \Bigl(\frac e{\cosh L}\Bigr)'=\frac{e'-\tanh L\,e}{\cosh L}
 =-\frac{3I}{\cosh^2L\sinh^5L}.
\]
For six generators, $a$ has a fixed sign and the
certified sign of $R$ is the opposite sign:
\begin{center}\small
\begin{tabular}{lll}\toprule
Generator & $a(L)$, with $\varrho=e^L$ & Sign of $R$\\\midrule
$C_1$ & $-5(\varrho^2-1)^2/[8(\varrho^2+1)^2]$ & $R\geq0$\\
$D_1$ & $5(\varrho^2-1)^2/[12(\varrho^2+1)^2]$ & $R\leq0$\\
$T_1$ & $(\varrho^4+30\varrho^2+1)/[24(\varrho^2+1)^2]$ & $R\leq0$\\
$A_2$ & $5(\varrho^2-1)^2/[24(\varrho^2+1)^2]$ & $R\leq0$\\
$B_2$ & $5(\varrho^2-1)^2/[6(\varrho^2+1)^2]$ & $R\leq0$\\
$P_3$ & $5(\varrho^2-1)^2(11\varrho^4+18\varrho^2+11)/[48(\varrho^2+1)^4]$ & $R\leq0$\\\bottomrule
\end{tabular}
\end{center}
The exact sign procedure is given in Section~\ref{sec:certificates}.
Here $b$ grows at most linearly at infinity (in fact it has a finite
limit, as the audit checks), $a$ tends to a nonzero
constant, and $e\to1/2$, so $G/h\to0$. Integrating the certified
derivative sign from infinity proves $G\geq0$ in all six cases: if $a>0$
and $R\leq0$, then $G/h$ decreases to zero and is therefore nonnegative;
if $a<0$ and $R\geq0$, then $G/h$ increases to zero and is nonpositive,
and $h<0$ again gives $G\geq0$.

For $T_j$, $j=2,3$, $a$ changes sign. Instead use
\begin{equation}\label{eq:second-scalar3}
 R_j=T_j''-\frac{h_j''}{h_j'}T_j'\geq0,
 \qquad h_j=a_j\cosh L.
\end{equation}
This operator also eliminates the trace: writing $T_j=h_jE+b_j$ with
$E=e/\cosh L$, the terms containing $E$ itself cancel in $R_j$, and only
$E'=-3I/(\cosh^2L\sinh^5L)$ and $E''$ remain, both explicit by
\eqref{eq:e-first}. The same device is used in \eqref{eq:second-scalar12}
below.
The exact derivatives are
\begin{align*}
 h_2'&=-\frac{(\varrho^2-1)(13\varrho^4+110\varrho^2+13)}{48\varrho(\varrho^2+1)^2}<0,\\
 h_3'&=-\frac{(\varrho^2-1)(77\varrho^8+716\varrho^6+846\varrho^4+716\varrho^2+77)}
                  {96\varrho(\varrho^2+1)^4}<0.
\end{align*}
Thus the operator has no interior singularity. The explicit formulas give
$T_j\to0$, $T_j'$ bounded, and $h_j'\to-\infty$.
Since $(T_j'/h_j')'=R_j/h_j'\leq0$ and $T_j'/h_j'\to0$,
we obtain $T_j'/h_j'\geq0$, hence $T_j'\leq0$ and $T_j\geq0$.
This proves every Region III comparison once the eight scalar signs are
certified.

\subsection{Regions I and II: corner gaps and sign principles}\label{sec:H12}

For these regions let $q=Mb_\v$ in the coordinates $(z_1,d,z_2,z_4)$,
where $M$ is the Jacobian $\partial(z_1,d,z_2,z_4)/\partial y$ of the
regional coordinates, transposed from \eqref{eq:chain-12}:
\begin{equation}\label{eq:q12}
 M_{\mathrm I}=\begin{pmatrix}
 1/2&1&1/2&0\\-1/2&0&1/2&0\\1/2&0&1/2&0\\0&0&0&1
 \end{pmatrix},\qquad
 M_{\mathrm{II}}=\begin{pmatrix}
 1/2&1&1/2&0\\1/2&0&-1/2&0\\1/2&0&1/2&0\\-1/2&0&1/2&1
 \end{pmatrix}.
\end{equation}
Write $\mathcal G_\v=F-q^T\nabla^2Fq$. After division by $\cosh d\cosh z_2$,
this is multiaffine in $\tanh d$ and $\tanh z_2$, including the
moving-endpoint term, which is bilinear in those variables. Enlarge the domain to the
square $0\leq d,z_2\leq z_1$. In Region II the formula is symmetric in
$d,z_2$, and interchanging experts 3 and 4 gives the corresponding control
permutation. In Region I, interchanging experts 1 and 2 instead gives
\begin{equation}\label{eq:missing-corner}
 \mathcal G_\v(z_1,0)-\mathcal G_{\v'}(0,z_1)
 =\sinh z_1\,[1-(v_1-v_2)^2]\geq0,
\end{equation}
where $\v'$ is $\v$ with $v_1,v_2$ interchanged. Indeed, this interchange
reverses the sign of $y_1$, which exchanges $d$ and $z_2$ and sends $b_\v$
to $b_{\v'}$, so the part of $F$ symmetric in $(d,z_2)$, namely the
integral and all of $F_4^\mathrm I$ except
$F^{\mathrm{as}}=-\tfrac12\sinh(z_2-d)$, contributes equally to the two
sides. For the antisymmetric part,
$q^T\nabla^2F^{\mathrm{as}}q=-\tfrac12\sinh(z_2-d)(q_{z_2}-q_d)^2$ with
$q_{z_2}-q_d=v_1-v_2$ by \eqref{eq:q12}, so
$F^{\mathrm{as}}-q^T\nabla^2F^{\mathrm{as}}q
=-\tfrac12\sinh(z_2-d)[1-(v_1-v_2)^2]$, which equals
$\tfrac12\sinh z_1[1-(v_1-v_2)^2]$ at $(d,z_2)=(z_1,0)$ and its negative
at $(0,z_1)$.
Consequently only $(d,z_2)=(0,0),(0,z_1),(z_1,z_1)$ need be checked.
The first two are shared by Regions I and II. The four distinct families are
\begin{center}
\begin{tabular}{clc}\toprule
Family & Scaled gaps $(y_1,y_2,y_3,y_4)$ & $n$\\\midrule
$V_0$ & $(0,z_1,0,z_4)$ & 0\\
$V_1$ & $(z_1,0,z_1,z_4)$ & 1\\
$V_{\mathrm I}$ & $(0,0,2z_1,z_4)$ & 2\\
$V_{\mathrm{II}}$ & $(2z_1,0,0,z_4+z_1)$ & 2\\\bottomrule
\end{tabular}
\end{center}
The integer $n$ counts how many of $d,z_2$ equal $z_1$.

Put $A=\coth L$, $s=2z_4$, and change variables $c=\coth t$ in the
integral, so that $e^{-2z_4\coth t}=e^{-sc}$, $dt=-dc/(c^2-1)$, and
$\sinh t=(c^2-1)^{-1/2}$. Its positive density is
\begin{equation}\label{eq:mu}
 \mu(c)=\frac{p(\arccoth c)}{(c^2-1)^{3/2}},\qquad1<c\leq A,
\end{equation}
that is, $\int_L^\infty\sinh t\,p(t)f(\coth t)\dd=\int_1^A\mu(c)f(c)\,dc$ for
every integrable $f$, the reversed limits absorbing the sign of $dt$.
Each corner gap has the form
\begin{equation}\label{eq:gap-measure}
 \mathcal G_\v(s)=B_\v(L)+\int_1^A\mu(c)K_\v(L,c)e^{-sc}\,dc
                         +a_\v(L)e^{-sA}.
\end{equation}
For the four-expert contribution, we use the two facts from
\cite[Theorems~3.1 and~3.2]{BEZ20} recorded after \eqref{eq:four-background}:
the function $u_4(x_1,\dots,x_4)=x_1+F_4/k$ is the global $C^2$ solution of
the four-expert equation, and $(1,0,1,0)$ attains its Hamiltonian maximum
throughout the closed ordered sector. The identity
$\partial_{z_2}^2F_4=F_4$ identifies the stated maximizing control.
The inequality extends to ties by the $C^2$ regularity in
\cite[Theorem~3.1]{BEZ20}.

For $\bar \v=(v_1,\dots,v_4)$ the four-expert contribution is therefore
\[
 B_\v=F_4-q^T\nabla^2F_4q
     =\frac{k}{2}\bigl[2(u_4-x_1)-D_{\bar \v}^2u_4\bigr]\geq0.
\]
The fifth component of $\v$ has no effect. Every corner retained above is
in the closed four-expert sector, including $V_{\mathrm I}$ and
$V_{\mathrm{II}}$. The nonphysical corner $(d,z_2)=(z_1,0)$ is handled
by symmetry and \eqref{eq:missing-corner}; no four-expert inequality outside
the ordered sector is used. The atom comes from the moving lower limit $z_1$ of the integral: as in
\eqref{eq:jet-recipe}, the first $z_1$-derivative produces no endpoint
term, and the second produces
$-\partial_{z_1}(\text{integrand})|_{t=z_1}=p(L)e^{-sA}\Phi_L(d)\Phi_L(z_2)$,
which vanishes unless $d=z_2=0$ because $\Phi_L(L)=0$. In
$\mathcal G_\v=F-q^T\nabla^2Fq$ it carries the coefficient $-q_{z_1}^2$,
so $a_\v=-q_{z_1}^2p(L)$ when $n=0$ and $a_\v=0$ otherwise. It must be
retained, since the second derivative moves the integration endpoint.

We can express all of the kernels using one polynomial formula. We set
$(a_0,a_1,a_2)=(L,d,z_2)$, with $d,z_2\in\{0,L\}$ at the corners,
$\psi_i=\cosh a_i-c\sinh a_i$, $N_i=c\cosh a_i-\sinh a_i$,
and $P=\psi_0\psi_1\psi_2$, so that for fixed $c$ the integrand of the
correction is $e^{-sc}P$ with
\[
 \partial_{a_i}\psi_i=-N_i,\qquad \partial_{a_i}^2\psi_i=\psi_i,\qquad
 \partial_{z_4}e^{-sc}=-2c\,e^{-sc},\qquad \partial_{z_4}^2e^{-sc}=4c^2e^{-sc}.
\]
Expanding $q^T\nabla^2(e^{-sc}P)q$ with these rules and subtracting it
from $e^{-sc}P$ gives
\begin{equation}\label{eq:kernel-poly}
\begin{aligned}
 K_\v={}&\left(1-\sum_{i=0}^2q_i^2-4q_{z_4}^2c^2\right)P
       -2\sum_{i<j}q_iq_jN_iN_j\psi_h-4q_{z_4}c\sum_iq_iN_i\prod_{j\neq i}\psi_j,
\end{aligned}
\end{equation}
where $h$ is the remaining index in the pair sum. We use this formula to generate each
of the 64 comparisons.

We next give three elementary sign criteria that will be used in the
Hamiltonian comparisons. We state them for an integrable density $h$ on $[1,A]$.

\begin{lemma}\label{lem:laplace}
Let $G(s)=B+\int_1^A h(c)e^{-sc}\,dc+ae^{-sA}$, with $B\geq0$,
$a\leq0$, and $G(0)\geq0$.
\begin{enumerate}
\item If $h\leq0$, then $G(s)\geq G(0)$. If $h\geq0$, then
$G(s)\geq(1-e^{-sA})B+e^{-sA}G(0)$.
\item If $a=0$ and $h$ changes sign at most once, from positive to negative,
then $G(s)\geq0$.
\item If $a=0$, $h$ changes sign at most once from negative to positive,
and $-\int_1^A c h(c)\,dc\geq0$, then $G'(s)\geq0$ and $G(s)\geq0$.
\end{enumerate}
\end{lemma}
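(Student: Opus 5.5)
The three parts rest on elementary facts about the Laplace-type function $s\mapsto\int_1^A h(c)e^{-sc}\,dc$ for $s\ge0$ and $1\le c\le A$. Throughout I use the identity
\[
G(s)=B+\int_1^A h(c)e^{-sc}\,dc+ae^{-sA},\qquad G(0)=B+\int_1^A h\,dc+a.
\]

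\emph{Part 1.} If $h\le0$, then $e^{-sc}\le1$ gives $\int h e^{-sc}\ge\int h$. Also $a\le0$ and $e^{-sA}\le1$ give $ae^{-sA}\ge a$. Adding these, $G(s)\ge G(0)$.

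If $h\ge0$, then $c\le A$ gives $e^{-sc}\ge e^{-sA}$, so $\int h e^{-sc}\ge e^{-sA}\int h$. Hence
\[
G(s)\ge B+e^{-sA}\Bigl(\int h+a\Bigr)=B+e^{-sA}\bigl(G(0)-B\bigr)=(1-e^{-sA})B+e^{-sA}G(0).
\]
This bound is nonnegative because $B\ge0$ and $G(0)\ge0$.

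\emph{Part 2.} Here $a=0$. Let $c_0$ be the sign change point, so $h\ge0$ on $[1,c_0]$ and $h\le0$ on $[c_0,A]$. The standard trick is to factor out $e^{-sc_0}$. Then $h(c)e^{-s(c-c_0)}\ge h(c)$ on both pieces: on $c\le c_0$ we have $h\ge0$ and the weight is at least $1$, and on $c\ge c_0$ we have $h\le0$ and the weight is at most $1$. Therefore
\[
\int_1^A h e^{-sc}\,dc\ge e^{-sc_0}\int_1^A h\,dc=e^{-sc_0}\bigl(G(0)-B\bigr),
\]
and so $G(s)\ge(1-e^{-sc_0})B+e^{-sc_0}G(0)\ge0$. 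If $h$ has no sign change, the argument of part 1 applies directly.

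\emph{Part 3.} Now $h$ switches from negative to positive at $c_0$. Set $k(c)=-c\,h(c)$, so that $G'(s)=\int_1^A k(c)e^{-sc}\,dc$. Since $c>0$, the function $k$ changes sign at most once, from positive to negative, at the same point $c_0$. The same factoring argument as in part 2, with $k$ in place of $h$, gives
\[
G'(s)\ge e^{-sc_0}\int_1^A k\,dc=e^{-sc_0}\Bigl(-\int_1^A c\,h\,dc\Bigr)\ge0.
\]
Thus $G$ is nondecreasing, and $G(s)\ge G(0)\ge0$.

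\emph{Remarks.} Differentiation under the integral is justified because $h$ is integrable on the compact interval $[1,A]$ and $c\,e^{-sc}$ is bounded there. The only delicate point is choosing the comparison weight $e^{-sc_0}$ at the sign change point. I expect no real obstacle beyond that.
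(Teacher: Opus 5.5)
Your proof is correct and follows the paper's argument essentially step for step. Part 1 uses the same termwise comparisons. In parts 2 and 3, comparing against the weight $e^{-sc_0}$ at the sign-change point is exactly the paper's observation that $h(c)(e^{-sc}-e^{-sc_*})\geq0$, with the reversed inequality after multiplying by $-c$; your remark justifying differentiation under the integral sign fills a small detail the paper leaves implicit.
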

\begin{proof}
For the first assertion, if $h\leq0$ then
\[
 G(s)-G(0)=\int_1^Ah(c)\bigl(e^{-sc}-1\bigr)\,dc+a\bigl(e^{-sA}-1\bigr)\geq0,
\]
both integrand and atom being products of two nonpositive factors; if
$h\geq0$ then $e^{-sc}\geq e^{-sA}$ on $[1,A]$ gives
\[
 G(s)\geq B+e^{-sA}\Bigl(\int_1^Ah\,dc+a\Bigr)=B+e^{-sA}\bigl(G(0)-B\bigr).
\]
For the second assertion let $c_*$ be the transition point. Then
$h(c)(e^{-sc}-e^{-sc_*})\geq0$ for every $c$, since both factors change
sign at $c_*$ in the same direction, so with $a=0$
\[
 G(s)\geq B+e^{-sc_*}\int_1^Ah\,dc=(1-e^{-sc_*})B+e^{-sc_*}G(0)\geq0.
\]
In the last case, multiplication by $c>0$ preserves the negative-to-positive
ordering of $h$, so $ch(c)(e^{-sc}-e^{-sc_*})\leq0$ and
\[
 G'(s)=-\int_1^Ach(c)e^{-sc}\,dc\geq-e^{-sc_*}\int_1^Ach(c)\,dc\geq0,
\]
whence $G(s)\geq G(0)\geq0$.
Zero densities and absent sign changes follow by the same comparisons.
\end{proof}

For the remaining comparisons, we use a second cumulative integral.

\begin{lemma}\label{lem:cumulative}
For the same $G$, define
\[
 C_2(b)=Bb+\int_1^b(b-c)h(c)\,dc\quad(1\leq b\leq A),\qquad
 C_2(b)=Bb\quad(0\leq b\leq1).
\]
If $G(0)\geq0$ and $C_2\geq0$ on $[0,A]$, then $G(s)\geq0$ for $s\geq0$.
\end{lemma}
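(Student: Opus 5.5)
We aim to prove Lemma~\ref{lem:cumulative} by expressing $G(s)$ as a positive combination of the values $C_2(b)$ and $G(0)$. For $s>0$ the idea is to integrate twice by parts in the variable $b$ against the kernel $e^{-sb}$. The lemma has no hypothesis on the sign of $a$ beyond $a\leq0$ from Lemma~\ref{lem:laplace}. The atom $ae^{-sA}$ is therefore expected to be absorbed through $G(0)$ at the endpoint $b=A$.

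First we extend $h$ by zero to $[0,1)$ and place the constant $B$ as a point mass at $c=0$. Writing $B=\int e^{-s\cdot 0}\,\delta_0$, we have
\[
 G(s)=\int_{[0,A]}e^{-sc}\,d\nu(c)+ae^{-sA},\qquad \nu=B\delta_0+h\,dc .
\]
Then $C_2(b)=\int_{[0,b]}(b-c)\,d\nu(c)$ is the second cumulative function of $\nu$. It is continuous and piecewise $C^1$, with $C_2'(b)=C_1(b)=\nu([0,b])$ and $C_2''=h$ a.e. Two integrations by parts on $[0,A]$ give
\[
 \int_{[0,A]}e^{-sc}\,d\nu=e^{-sA}C_1(A)+se^{-sA}C_2(A)+s^2\int_0^A e^{-sb}C_2(b)\,db ,
\]
because $C_1(0^-)=C_2(0)=0$.

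Next we identify the boundary terms. We have $C_1(A)=B+\int_1^Ah\,dc=G(0)-a$. Hence
\[
 G(s)=e^{-sA}G(0)+se^{-sA}C_2(A)+s^2\int_0^Ae^{-sb}C_2(b)\,db .
\]
In this form the atom cancels exactly, without any sign assumption on $a$. Each of the three terms is nonnegative by the hypotheses $G(0)\geq0$ and $C_2\geq0$ on $[0,A]$, which gives $G(s)\geq0$ for $s\geq0$. The case $s=0$ is trivial.

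The only delicate point is justifying the integration by parts in the presence of the atom at $0$ and the kink of $C_2$ at $b=1$. For $h$ merely integrable, $C_2$ is $C^1$ on $(0,A]$ with absolutely continuous derivative. At $b=0$ the derivative jumps by $B$, and this jump is exactly the atom contribution. The cleanest route is to verify the identity directly by Fubini. We substitute $C_2(b)=\int_{[0,b]}(b-c)\,d\nu(c)$ into $s^2\int_0^Ae^{-sb}C_2(b)\,db$ and compute
\[
 s^2\int_c^A(b-c)e^{-sb}\,db=e^{-sc}-e^{-sA}\bigl(1+s(A-c)\bigr).
\]
Integrating this against $d\nu$ reproduces the displayed formula, since $\int(A-c)\,d\nu=C_2(A)$.
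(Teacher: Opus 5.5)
Your proof is correct and is essentially the paper's argument: both establish the exact identity \eqref{eq:cumulative-identity}, $G(s)=e^{-sA}G(0)+se^{-sA}C_2(A)+s^2\int_0^Ae^{-sb}C_2(b)\,db$, absorb the atom through $C_1(A)+a=G(0)$ without using the sign of $a$, and conclude because all three terms are nonnegative. The difference is only in how the identity is justified: the paper integrates by parts twice using $C_2'=C_1$, while you verify it by Fubini against $\nu=B\delta_0+h\,dc$, which is valid (and $C_2$ is in fact $C^1$ across $b=1$, so there is no kink there to handle).
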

\begin{proof}
Let $C_1(b)=B+\int_1^bh\,dc$ for $b\geq1$ and $C_1(b)=B$ for
$0\leq b\leq1$, so that $C_2'=C_1$, $C_2(0)=0$, and
$C_1(A)+a=G(0)$. Integrating by parts once,
\[
 G(s)=B+\int_0^Ae^{-sc}C_1'(c)\,dc+ae^{-sA}
     =e^{-sA}\bigl(C_1(A)+a\bigr)+s\int_0^Ae^{-sc}C_1(c)\,dc,
\]
and integrating by parts once more, with $C_1=C_2'$, gives the exact
identity
\begin{equation}\label{eq:cumulative-identity}
 G(s)=e^{-sA}G(0)+s e^{-sA}C_2(A)
                    +s^2\int_0^A e^{-sb}C_2(b)\,db.
\end{equation}
The atom at $A$ enters $G(0)$, and its contribution to $C_2(A)$ is zero.
All terms on the right are nonnegative.
\end{proof}

We can now give the full classification. The symbols $+$, $-$, $0$ denote the
sign of the polynomial kernel, not the sign or vanishing of the full gap;
PN denotes a positive-to-negative transition,
M uses the moment condition in Lemma~\ref{lem:laplace}, and C uses
Lemma~\ref{lem:cumulative}. M and C each reduce to three distinct gaps,
listed below; the audit identifies these cases by exact equality of the
triple (base, kernel, atom). The standing hypotheses $B\geq0$ and
$\mathcal G_\v(0)\geq0$ of Lemmas~\ref{lem:laplace}
and~\ref{lem:cumulative} are supplied for all $64$ cases in
Section~\ref{sec:H122}, through the boundary values at $z_4=0$ and
\eqref{eq:bottom-cone}.
\begin{center}
\begin{tabular}{ccccc}\toprule
$\v$ & $V_0$ & $V_1$ & $V_{\mathrm I}$ & $V_{\mathrm{II}}$\\\midrule
00000 & $+$ & $+$ & $+$ & $+$\\
00001 & $-$ & $-$ & $-$ & $-$\\
00010 & $+$ & PN & PN & $+$\\
00011 & $-$ & $-$ & $-$ & M\\
00100 & $+$ & $+$ & $+$ & $+$\\
00101 & $-$ & M & M & M\\
00110 & $+$ & $+$ & $+$ & $+$\\
00111 & 0 & 0 & 0 & 0\\
01000 & $+$ & $+$ & $+$ & $+$\\
01001 & C & M & M & M\\
01010 & 0 & $+$ & $+$ & $+$\\
01011 & 0 & 0 & 0 & 0\\
01100 & 0 & 0 & 0 & $+$\\
01101 & 0 & 0 & $+$ & 0\\
01110 & C & C & M & C\\
01111 & $+$ & PN & $+$ & PN\\\bottomrule
\end{tabular}
\end{center}
We emphasize that the base and endpoint atom in \eqref{eq:gap-measure}
are still included in each comparison. For example, at $V_0$ with $\v=00111$, the table records a zero
kernel, but
\[
 \mathcal G_\v(s)=\tfrac12\tanh L-p(L)e^{-s\coth L},\qquad
 \mathcal G_\v(0)=\eta(L)>0.
\]
Lemma~\ref{lem:laplace} accounts for the negative atom, and the gap is
positive for every $L>0$ and $s\geq0$.
We verify the polynomial sign classifications as follows. We divide $K_\v$ by
$S^{n+1}$, write it as a polynomial in $A,c$, and substitute
$A=1+a$, $c=1+ay$, where $a>0$ and $0\leq y\leq1$.
Its Bernstein coefficients in $y$ are polynomials in $a$. The signs of their
power coefficients establish fixed signs or at most one sign variation.
For a sequence with one uncertain coefficient, either of its signs gives
the same allowable variation bound. Bernstein variation reduction follows
by substituting $y=t/(1+t)$ and applying Descartes' rule to the resulting
power polynomial.
There is one exceptional M kernel, $V_{\mathrm I}$ with $\v=00101$.
Its Bernstein coefficients are
\begin{gather*}
 -\frac54a^3,\quad -\frac{a^3(6a-7)}{10},\quad
 -\frac{3a^3(2a^2-8a-17)}{40}, \quad \frac{a^3(a+2)(3a+5)}{10},\quad\frac{a^3(a+2)^2}{10},\quad0.
\end{gather*}
The two uncertain coefficients cannot have signs $+,-$, since
$2a^2-8a-17<0$ on $0\leq a\leq7/6$. Thus this kernel also has at most one
negative-to-positive transition. The exact audit reconstructs each kernel
from \eqref{eq:kernel-poly} before performing these sign checks.

\subsection{Regions I and II: the scalar inequalities}\label{sec:H122}

At $z_4=0$, the families $V_0,V_1,V_{\mathrm{II}}$ coincide with the Region III
vertices with respectively zero, one, and two entries equal to $L$.
The $C^2$ matching therefore gives all 48 boundary inequalities there.
For the remaining family $V_{\mathrm I}$, the comparisons at $z_4=0$
reduce to the three generators
\[
 C_*=\mathcal G_{00011},\qquad
 B_*=\mathcal G_{00100},\qquad D_*=\mathcal G_{00110}.
\]
The kernels of $B_*,D_*$ are nonnegative, these being the $+$ entries at
$(V_{\mathrm I},00100)$ and $(V_{\mathrm I},00110)$ of the table, and
their bases are nonnegative by the four-expert inequality. The full
remaining list is
\begin{equation}\label{eq:bottom-cone}
\begin{aligned}
 \mathcal G_{00001}=\mathcal G_{00010}&=C_*+B_*+D_*,\\
 \mathcal G_{01000}=\mathcal G_{01111}&=B_*,\\
 \mathcal G_{00101}=\mathcal G_{01001}=\mathcal G_{01110}
   =\mathcal G_{01010}=\mathcal G_{01101}&=D_*,\\
 \mathcal G_{00111}=\mathcal G_{01011}=\mathcal G_{01100}&=0.
\end{aligned}
\end{equation}
Control $00000$ is harmless because $F\geq0$. Only $C_*\geq0$ needs an
additional scalar sign; we certify $C_*/C^2$.

All needed scalar integrals reduce to seven moments:
\begin{equation}\label{eq:moments-def}
 M_j(L)=\int_1^{\coth L}c^j\mu(c)\,dc
       =\int_L^\infty\sinh t\,p(t)\coth^jt\dd,
 \qquad0\leq j\leq6.
\end{equation}
Let $d_0=p(L)S$. The first two are
\begin{equation}\label{eq:moments01}
 M_1=\frac{d_0+1/(2C)}5,
 \qquad M_0=\frac{e-C\theta+SM_1}{C}.
\end{equation}
The second identity is \eqref{eq:g-green} at $X=L$, since
$\sinh(t-L)=\sinh t\cosh L-\cosh t\sinh L$ gives $g(L)=CM_0-SM_1$
and $g=e-e_4=e-C\theta$. The first follows from the density equation
\eqref{eq:p-ode}, which gives
$(\sinh t\,p)'=\cosh t\,p+\sinh t\,p'=-5\cosh t\,p+\tfrac12\tanh t\,\sech t$;
integrating from $L$ to infinity, where $\sinh t\,p\to0$, yields
$5M_1=d_0+\tfrac12\sech L$. In the $c$ variable, with
$c'=-(c^2-1)$ and $\sech^2t=(c^2-1)/c^2$, the density equation reads
\[
 \mu'-\frac{3c}{c^2-1}\mu=-\frac1{2c^2(c^2-1)^{3/2}}.
\]
Hence
\[
 \frac{d}{dc}\bigl[c^j(c^2-1)\mu\bigr]
 =(j+5)c^{j+1}\mu-jc^{j-1}\mu-\frac{c^{j-2}}{2\sqrt{c^2-1}},
\]
and integrating from $1$ to $A$, with $(A^2-1)\mu(A)=d_0$, gives
\begin{equation}\label{eq:momentrec}
 M_{j+1}=\frac{A^jd_0+jM_{j-1}+Q_{j-2}/2}{j+5},\qquad j\geq1,
\end{equation}
where $Q_r=\int_1^A c^r/\sqrt{c^2-1}\,dc=\int_L^\infty\coth^rt\,\csch t\dd$
and the needed values are
\begin{equation}\label{eq:Q-primitives}
 Q_{-1}=2\theta,\quad Q_0=\lambda,\quad Q_1=S^{-1},\quad
 Q_2=\tfrac12(C/S^2+\lambda),\quad Q_3=\tfrac1{3S^3}+S^{-1}.
\end{equation}
The boundary term at $c=1$ is zero, since
$(c^2-1)\mu=\O(\sqrt{c-1})$. These formulas are also checked by
$M_j'=-SpA^j$ and their zero limits at infinity.

There are only three distinct M comparisons: $(V_1,00101)$,
$(V_{\mathrm I},00101)$, and $(V_{\mathrm{II}},00011)$. For these the
moment condition of Lemma~\ref{lem:laplace}(3), with $h=\mu K_\v$, is
$-\int_1^Ac\,\mu(c)K_\v(L,c)\,dc\geq0$; if $K_\v=\sum_jk_j(L)c^j$
this reads
\begin{equation}\label{eq:N-moments}
 N_\v(L)=-C^{-n}\sum_j k_j(L)M_{j+1}(L)\geq0.
\end{equation}
The factor $C^{-n}>0$ is only a convenient normalization.
The moment recursion makes all three finite expressions, linear in
$e,p,\theta,\lambda$; the certificates below establish their signs.

We next treat the three cumulative comparisons, marked C in the sign
table.
The distinct C comparisons are $(V_0,01001)$, $(V_1,01110)$, and
$(V_{\mathrm{II}},01110)$. All have $\mathcal G_\v(0)=0$.
For $t\geq L$ define
\begin{equation}\label{eq:J-cumulative}
 J(L,t)=B_\v(L)+\int_1^{\coth t}(1-c\tanh t)\mu(c)K_\v(L,c)\,dc.
\end{equation}
This is $C_2(\coth t)/\coth t$. Let $m=n+1$, which takes the values
$1,2,3$ in these three cases, and put
\begin{equation}\label{eq:H-cumulative}
 x=e^{-2L},\quad y=e^{-2t},\qquad
 H(x,y)=e^{-mL}J(L,t),\qquad0<y\leq x<1.
\end{equation}
For fixed $c$, $e^{-mL}K_\v(L,c)$ is a polynomial of degree at most $m$ in $x$.
Thus its $(m+1)$st derivative vanishes, and only the four-expert base
contributes to the next derivative. We obtain the positive rational function
\begin{equation}\label{eq:positive-forcing}
 \partial_x^{m+1}H(x,y)
 =\frac{m!\varrho^{2m+5}}{(\varrho^2-1)^m(\varrho^2+1)^3}>0,
 \qquad \varrho=e^L=x^{-1/2}.
\end{equation}
This identity follows by differentiating \eqref{eq:four-background} with
the specified control; it is checked exactly for all three cases.

It remains to prove the nine diagonal signs
\begin{equation}\label{eq:Aj}
 A_j(t)=\left.\partial_x^jH(x,y)\right|_{x=y}\geq0,
 \qquad 0\leq j\leq m.
\end{equation}
For fixed $y>0$, the kernel part of $H(\cdot,y)$ is polynomial and its
base is smooth for $L>0$, so $H(\cdot,y)\in C^{m+1}([y,x])$ whenever
$y\leq x<1$. Taylor's formula with integral remainder then gives
\begin{equation}\label{eq:positive-Taylor}
 H(x,y)=\sum_{j=0}^m\frac{A_j(t)}{j!}(x-y)^j
 +\frac1{m!}\int_y^x(x-\xi)^m\partial_x^{m+1}H(\xi,y)\,d\xi\geq0.
\end{equation}
Thus $C_2\geq0$ and Lemma~\ref{lem:cumulative} applies.

To write the scalar functions explicitly, we set
\[
 b_j=\partial_x^j(e^{-mL}B_\v),\qquad
 \partial_x^j(e^{-mL}K_\v)=\sum_r k_{jr}(L)c^r,
 \qquad \partial_x=-\tfrac12e^{2L}\partial_L.
\]
Differentiate with $c$ fixed before taking the diagonal. Then
\begin{equation}\label{eq:Aj-formula}
 A_j(L)=b_j(L)+\sum_r k_{jr}(L)
                     \bigl[M_r(L)-\tanh L\,M_{r+1}(L)\bigr].
\end{equation}
Equations \eqref{eq:kernel-poly}, \eqref{eq:moments01},
\eqref{eq:momentrec}, and \eqref{eq:Aj-formula} specify all nine scalars
by finite rational operations and the trace. We use these exact expressions as inputs to the certificate calculation.

We now prove the thirteen remaining scalar inequalities.
Substitute \eqref{eq:p-elementary} into the nine $A_j$, the three $N_\v$,
and $C_*/C^2$. Every $\theta$ term cancels. The resulting expressions have
the same form $G=ae+b$ as in the Region III proof.
Eleven have first-order certificates \eqref{eq:first-scalar}, with
$-\operatorname{sgn}(a)R\geq0$:
\begin{center}\small
\begin{tabular}{lll}\toprule
Scalar & Indices & Signs of $a$\\\midrule
$A_j$ at $(V_0,01001)$ & $j=0,1$ & $+,-$\\
$A_j$ at $(V_1,01110)$ & $j=0,1,2$ & $+,-,-$\\
$A_j$ at $(V_{\mathrm{II}},01110)$ & $j=0,2,3$ & $+,-,-$\\
$C_*/C^2$ at $V_{\mathrm I}$ & & $-$\\
$N_{00101}$ at $V_{\mathrm I}$ & & $+$\\
$N_{00011}$ at $V_{\mathrm{II}}$ & & $-$\\\bottomrule
\end{tabular}
\end{center}
Fixed signs of $a$ are verified by exact polynomial root counts on $\varrho>1$,
after removing zeros at $\varrho=1$, and evaluation at a rational test point.
The endpoint calculation gives $b/(aC)\to0$, while $e/C\to0$.
Integration from infinity proves all eleven scalar inequalities.

The remaining moment at $(V_1,00101)$ is independent of $e$:
\begin{equation}\label{eq:direct-moment}
\begin{aligned}
 N_{00101}={}&\bigl[105\lambda \varrho^8+6\lambda \varrho^6-6\lambda \varrho^2-105\lambda
       -256p \varrho^5-256p \varrho^3\\
       &\hspace{15mm}-210\varrho^7-82\varrho^5+82\varrho^3+210\varrho\bigr]
       \big/\bigl[1008\varrho(\varrho^2-1)(\varrho^2+1)^2\bigr].
\end{aligned}
\end{equation}
After substitution for $p$, it is certified directly.

Finally $A_1$ at $(V_{\mathrm{II}},01110)$ has the sign-changing coefficient
\[
 a=\frac{3(\varrho^2-1)(7\varrho^4-14\varrho^2-33)}{256\varrho^3(\varrho^2+1)^2}.
\]
Multiply by $\sigma=512\varrho^4(\varrho^2+1)/[3(\varrho^2-1)]>0$ and put
$\widetilde G=\sigma A_1$. Its homogeneous solution is
$h=\sigma a C=7\varrho^4-14\varrho^2-33$, with $h'=28\varrho^2(\varrho^2-1)>0$.
The exact certificate is
\begin{equation}\label{eq:second-scalar12}
 \widetilde G''-\frac{h''}{h'}\widetilde G'\geq0.
\end{equation}
Expansion \eqref{eq:e-asymptotic} gives
$\widetilde G\to0$ and $\widetilde G'/h'\to0$; only the accuracy
$o(e^{-5L})$ of its three displayed terms is needed for these two limits,
since the coefficients of $e$ in $\widetilde G$ and in $\widetilde G'/h'$
are $\O(\varrho^5)$.
Hence $(\widetilde G'/h')'\geq0$, so $\widetilde G'\leq0$ and
$\widetilde G\geq0$. The purpose of the positive rescaling is to ensure that the homogeneous
derivative has no zero in the interior.

\subsection{Exact rational certificates and the supplement}\label{sec:certificates}

We now describe the exact certificates used to verify the scalar
inequalities. The full arithmetic data are included in the supplement. Every signed residual just used can be
written as
\begin{equation}\label{eq:rational-log-form}
 \frac{A(q)(-\log q)+B(q)\,2\atanh q+C(q)}{D(q)},
 \qquad q=e^{-L}\in(0,1),\quad D(q)>0,
\end{equation}
where the coefficients are rational polynomials. Derivatives are evaluated
exactly using $\varrho\partial_\varrho$, $L'=1$, $\lambda'=-1/\sinh L$, and
\eqref{eq:e-first}. Polynomial root counts check every denominator sign
and exclude interior singularities.

For $0\leq x<1$ and $N=12$, let
\begin{equation}\label{eq:log-bound}
 T_N(x)=2\sum_{j=0}^{N-1}\frac{x^{2j+1}}{2j+1},\qquad
 T_N(x)\leq2\atanh x\leq
 T_N(x)+\frac{2x^{2N+1}}{(2N+1)(1-x^2)}.
\end{equation}
The last inequality bounds the tail by a geometric series. Use the seven
intervals with endpoints
\begin{equation}\label{eq:boxes}
 0,\quad 1/64,\quad1/32,\quad1/16,\quad1/8,\quad1/4,\quad1/2,\quad1.
\end{equation}
On a middle dyadic interval $[2^{-j},2^{1-j}]$ write
\[
 -\log q=j\log2-2\atanh\frac{2^jq-1}{2^jq+1},
 \qquad\log2=2\atanh(1/3).
\]
On $[1/2,1)$ use $-\log q=2\atanh((1-q)/(1+q))$.
On $(0,1/64]$ use $6\log2\leq-\log q\leq(q^{-1}-q)/2$.
All the resulting logarithm bounds are rational functions.

If a coefficient of a logarithm has a fixed sign, substitute its appropriate
upper or lower bound to obtain a lower bound for the entire residual.
There is also a fully explicit rule for a mixed-sign coefficient $A(q)$.
Let $\widehat A(q)$ be the polynomial with absolute values of its power
coefficients, so $|A(q)|\leq\widehat A(q)$ for $q\geq0$. If
$\underline f\leq f\leq\bar f$, then
\begin{equation}\label{eq:mixed-coefficient-bound}
 A(q)f(q)\geq A(q)\underline f(q)-\widehat A(q)\bigl(\bar f(q)-\underline f(q)\bigr).
\end{equation}
Thus coefficient sign changes create no unresolved interval decision.

After clearing denominators whose positivity has been verified, each lower
bound is a rational polynomial $P(q)$. For an interval $[a,b]$, write
\begin{equation}\label{eq:bernstein}
\begin{aligned}
 P(a+(b-a)t)&=\sum_{j=0}^d c_jt^j
            =\sum_{i=0}^d B_i\binom di t^i(1-t)^{d-i},\\
 B_i&=\sum_{j=0}^i c_j\frac{\binom ij}{\binom dj}.
\end{aligned}
\end{equation}
Every $B_i$ is nonnegative as an exact rational number. Therefore $P\geq0$
on its whole interval. Positive common-denominator rescaling gives the
integer coefficient lists in the supplement.

There are eight scalar certificates in Region III and thirteen in Regions I
and II. Each uses the seven intervals \eqref{eq:boxes}, with no additional
subdivision:
\begin{center}
\begin{tabular}{lrrr}\toprule
Region & Scalar signs & Polynomial certificates & Largest degree\\\midrule
III & 8 & 56 & 79\\
I and II & 13 & 91 & 72\\\midrule
Total & 21 & 147 & 79\\\bottomrule
\end{tabular}
\end{center}
All polynomial power coefficients, rational intervals, integer Bernstein
coefficients, and exact scalar expressions are included in the two certificate
archives. Separate checkers recompute the Bernstein coefficients using only
standard-library fractions; no symbolic library or floating-point calculation
is used in that independent arithmetic check. The identity audits reconstruct
the scalar expressions from the formula and match them to the certificate
inputs. The released verifier independently reconstructs every rational lower
bound from \eqref{eq:log-bound}--\eqref{eq:mixed-coefficient-bound}, checks all
denominator signs, and matches the resulting polynomial to its certificate
before the Bernstein check. This gives a reproducible exact
verification of all the scalar inequalities used above.

\par\bigskip\noindent\begin{minipage}{\linewidth}
The certificates and checks described in this section can be rerun
from the computational supplement, as follows.
The self-contained supplement is the versioned directory
\texttt{five-expert-proof-1.0.6}, archived at
\texttt{doi:10.5281/zenodo.22723924} \cite{Supplement} and distributed as
\texttt{five-expert-proof-1.0.6.zip}, whose SHA-256 checksum is
\begin{center}\small\texttt{a58aae99f888ecdd55fe402027f3928b8fabd90807000bcf0d520ce264e32486}\end{center}
It contains all exact proof inputs, checkers,
and the pinned dependency wheels. From the directory
containing the extracted release, install and run entirely offline:
\begin{verbatim}
python3.12 -m venv proof-env
proof-env/bin/python -m pip install --no-index --require-hashes \
  --find-links five-expert-proof-1.0.6/wheels \
  -r five-expert-proof-1.0.6/requirements.txt
proof-env/bin/python five-expert-proof-1.0.6/verify.py \
  --output proof-results
\end{verbatim}
\end{minipage}\par
The output directory must be new and outside the release. If pip is
configured for user installs, the install step needs the environment
setting \texttt{PIP\_USER=0}. The audit executes
\path{checks/trace_kernels.py}, \path{checks/formula.py},
\path{checks/faces.py}, \path{checks/region3.py}, \path{checks/region12.py},
\path{checks/comb.py}, \path{checks/log_bounds.py}, and
\path{checks/rejection_tests.py}. In particular, it reconstructs every
polynomial bound from its scalar expression before checking its exact
Bernstein coefficients; no numerical sign sampling is used. Adding \texttt{-O}
to the Python command exercises the same checks under optimization.
Logs, dependency versions, exit statuses, and the release manifest hash are
written to the output directory.

The supplement's \path{README.md} gives the installation and audit
procedure, and \path{REVIEW_GUIDE.md} maps every computational check, and the
checker's variable names, to the
paper and states the analytic steps requiring mathematical review.

\par\bigskip\noindent\begin{minipage}{\linewidth}
The verification of this section, the regularity of Section~\ref{sec:reg}, and
Theorem~\ref{thm:comb} are also formalized in Lean~4 on top of Mathlib
\cite{LeanCert}. The release is the versioned directory
\texttt{five-expert-lean-0.1.12}, distributed as
\texttt{five-expert-lean-0.1.12.zip}, whose SHA-256 checksum is
\begin{center}\small\texttt{f7e75a9c6e16bbc8c6abddbaf55ade7a8ab0d91f210f5eaffb85881e83456d34}\end{center}
The toolchain is pinned by \texttt{lean-toolchain} and Mathlib by
\texttt{lake-manifest.json}. From the directory containing the extracted
release:
\begin{verbatim}
curl -sSf https://elan.lean-lang.org/elan-init.sh | sh
cd five-expert-lean-0.1.12
lake exe cache get
lake build
\end{verbatim}
\end{minipage}
\vspace{1mm}

The build succeeds only if every proof checks and the axiom audit passes on all
$1730$ exported declarations, so a \texttt{sorry} or an axiom introduced by
native evaluation anywhere beneath a result stops it. The formalized chain runs
from the $21$ scalar inequalities to the two theorems: the Hamiltonian
inequalities at every ordered point, $u\in C^2(\mathbb R^5)$, $u-\max_ix_i$
bounded, the expansion at the origin, the optimality set of COMB, and the
four-expert inequality of \cite{BEZ20} on which Regions~I and~II rest. What is
not formalized is the viscosity characterization in Theorem~\ref{thm:main},
which cites the comparison principle of \cite{BEZ20}; $u$ is proved to solve
\eqref{eq:pde} classically on $\mathbb R^5$. Unlike the computational
supplement, the Lean build is not offline, since \texttt{lake exe cache get}
downloads Mathlib's prebuilt objects. The release's \path{README.md} and the
companion note \path{lean-note.pdf} describe the development and what a reader
must check to audit it.

\subsection{Completion of the proofs of the main theorems}\label{sec:comb}

We now complete the proofs of our main results from Section \ref{sec:main}. 

\begin{proof}[Proof of Theorem \ref{thm:main}]
Since $u\in C^2$ is a classical solution, it is also a viscosity solution, and it has linear growth because $u-\phi$ is bounded. By \cite[Proposition~2.2]{BEZ20}, the viscosity solution of \eqref{eq:pde} is unique in the class of functions with linear growth, so $u$ is the unique viscosity solution with bounded correction. We turn to the proof of the expansion \eqref{eq:origin-expansion}. By Proposition~\ref{prop:C2}, $u$ is $C^2$ at the origin. Permutation
symmetry and $u(x+c\one)=u(x)+c$ give $\nabla u(0)=\tfrac15\one$ (recall \eqref{eq:origin}) and they
force the Hessian $\nabla^2u(0)$ to commute with all permutations and to
annihilate $\one$, so $\nabla^2u(0)=\alpha(I-\tfrac15\one\one^T)$ for some
$\alpha$. For a binary $\v$ with $m$ entries equal to one,
$\v^T\nabla^2u(0)\v=\alpha\,m(5-m)/5$. If $\alpha\leq0$ the maximum over
$\v$ is zero and \eqref{eq:pde} would give $u(0)=0$, contradicting
$u(0)>0$; hence $\alpha>0$, the maximum is $6\alpha/5$, attained for
$m=2,3$, and \eqref{eq:pde} at the origin gives $u(0)=3\alpha/5$. Taylor's
formula with $\alpha=\tfrac53u(0)$ and
$x^T(I-\tfrac15\one\one^T)x=\tfrac45\bigl(\sum_ix_i^2-\tfrac12\sum_{i<j}x_ix_j\bigr)$
is \eqref{eq:origin-expansion}, with $u(0)=45\pi^2/(512\sqrt2)$ by
Lemma~\ref{lem:e0} and $\tfrac23u(0)=15\pi^2/(256\sqrt2)$. The same
argument with four experts gives
$\nabla^2u_4(0)=2u_4(0)(I-\tfrac14\one\one^T)$ and recovers
\cite[equation~(3.4)]{BEZ20} from $u_4(0)=\pi/(4\sqrt2)$.
\end{proof}

We now prove Theorem~\ref{thm:comb}. The Hamiltonian inequalities of this
section give $\Delta\geq0$ for the gap \eqref{eq:comb-gap} but do not
identify where equality holds. The proof upgrades this nonnegativity to
strict positivity: in Regions I and II through an exact positive-kernel
formula for $\Delta$, and in Region III through the strict positivity of
the generators of \eqref{eq:eight-generators}.

\begin{proof}[Proof of Theorem~\ref{thm:comb}]
We treat Regions I and II first, through an exact formula for the gap.
Both coordinate matrices \eqref{eq:q12} send $b_{\v_*}=(1,-1,1,0)$ to
$(0,0,1,0)$ and $b_{\v_C}=(1,-1,1,-1)$ to $(0,0,1,-1)$ in the coordinates
$(z_1,d,z_2,z_4)$, $d=|z_3|$. By \eqref{eq:direction-scaling},
$D_{\v_*}^2u=(2/k)F_{z_2z_2}$ and
$D_{\v_C}^2u=(2/k)(F_{z_2z_2}-2F_{z_2z_4}+F_{z_4z_4})$, hence
\begin{equation}\label{eq:comb-differential}
 \Delta=\frac2k(2F_{z_2z_4}-F_{z_4z_4}).
\end{equation}
Since the four-expert contribution is independent of $z_4$, it cancels
from the gap. Neither direction changes $z_1$, so differentiating the
integral produces no endpoint term. With $c=\coth t$, the integrand of
\eqref{eq:formula12} carries the factor $e^{-2z_4c}\Phi_t(z_2)$, on which
$\partial_{z_4}$ produces $-2c$ and
$\partial_{z_2}\Phi_t(z_2)=-N_t(z_2)$ with $N_t(z_2)=c\cosh z_2-\sinh z_2$;
so $2\partial_{z_2}\partial_{z_4}-\partial_{z_4}^2$ replaces
$\Phi_t(z_2)$ by $4c\,[N_t(z_2)-c\,\Phi_t(z_2)]$, and the identity
\[
 (c\cosh z_2-\sinh z_2)
  -c(\cosh z_2-c\sinh z_2)=(c^2-1)\sinh z_2
\]
reduces the differentiated kernel to a positive one:
\begin{equation}\label{eq:comb-positive}
 \Delta=\frac8k\sinh z_2\int_{z_1}^\infty
  \sinh t\,p(t)\coth t(\coth^2t-1)e^{-2z_4\coth t}
                  \Phi_t(z_1)\Phi_t(|z_3|)\dd.
\end{equation}
For $z_1>0$, every factor in the integrand is strictly positive for $t>z_1$.
Therefore $\Delta=0$ exactly when $z_2=0$, or $y_1=y_3=0$.
If $z_1=0$, the first four coordinates coincide. The controls $\v_*=10100$
and $\one-\v_C=01010$ each select two of those four tied coordinates, so
their curvatures agree by permutation symmetry, and $\v_C$ has the same
curvature as its complement. This proves the theorem in
Regions I and II, including their boundary collisions.

In Region III we use the multiaffine structure of the gap. For $a_4>0$ put
\[
 \xi_i=\frac{\tanh a_i}{\tanh a_4}\in[0,1],\qquad
 \widehat G=\frac{k\Delta}{2\cosh a_1\cosh a_2\cosh a_3}.
\]
The COMB direction in these coordinates is $q=(-2,1,-2,1)/3$, the image of
$b_{\one-\v_C}=(-1,1,-1,1)$ under \eqref{eq:q3}; the sign of $q$ is
immaterial in the quadratic form.
The eight values of its multiaffine gap are
\begin{center}
\begin{tabular}{ccccccccc}\toprule
$(\xi_1,\xi_2,\xi_3)$ & 000 & 001 & 010 & 011 & 100 & 101 & 110 & 111\\\midrule
$\widehat G$ & 0 & $D_1$ & 0 & $B_2$ & $D_1$ & $A_2$ & $B_2$ & $P_3$\\\bottomrule
\end{tabular}
\end{center}
The coefficients are the generators defined in
\eqref{eq:eight-generators}; permuting the three $a_i$ explains the repeated
values.

To characterize the equality set, we need to show that these generators
are strictly positive.
For each $G\in\{D_1,B_2,A_2,P_3\}$ its first-order certificate has
$a>0$ and $R=(G/(aC))'\leq0$. In each of the seven polynomial certificates
for $-R$, at least one Bernstein coefficient is strictly positive and the
others are nonnegative. Every Bernstein basis function is positive in the
open interval. Thus $R<0$ in each interval interior. Since $G/(aC)\to0$,
integration from infinity gives
\begin{equation}\label{eq:strict-generators}
 D_1(L)>0,\qquad B_2(L)>0,\qquad A_2(L)>0,\qquad P_3(L)>0
 \quad(L>0).
\end{equation}
The strict inequalities follow from 28 of the 56 exact polynomial
certificates already used in the Region III verification.

Using multiaffine interpolation of the table, grouping the vertex terms
by their dependence on $\xi_2$, we obtain the factorization
\begin{equation}\label{eq:comb-factor}
\begin{aligned}
 \widehat G={}&(\xi_1+\xi_3-2\xi_1\xi_3)
                    \bigl[(1-\xi_2)D_1+\xi_2B_2\bigr]\\
              &+\xi_1\xi_3\bigl[(1-\xi_2)A_2+\xi_2P_3\bigr].
\end{aligned}
\end{equation}
Both brackets are strictly positive. Also
$\xi_1+\xi_3-2\xi_1\xi_3=\xi_1(1-\xi_3)+(1-\xi_1)\xi_3\geq0$.
Consequently the gap vanishes exactly when $\xi_1=\xi_3=0$.
The physical ordering $0\leq a_1\leq a_2\leq a_3$ forces all three $a_i$
to vanish. By \eqref{eq:inverse3}, this is $y_1=y_3=y_4=0$.
It is precisely the part of \eqref{eq:comb-set} lying in Region III.
The case $a_4=0$ is the full collision and follows by continuity.
Together with \eqref{eq:comb-positive}, this completes the proof.
\end{proof}

\appendix
\section{Proof of the gluing lemma}\label{sec:appendix-gluing}

We prove Lemma~\ref{lem:gluing} in the notation of its statement:
$\Omega\subset\R^n$ is open and convex, $K_1,\dots,K_N$ are closed convex
sets with nonempty interiors whose union contains $\Omega$, and $u$ is
continuous on $\Omega$, is $C^2$ in the interior of each $K_j$, and has a
gradient and Hessian that extend continuously to each $\Omega\cap K_j$ and
agree at every point of $\Omega$ for all the $K_j$ containing it.

\begin{proof}[Proof of Lemma~\ref{lem:gluing}]
Let $g$ and $h$ be the common extended gradient and Hessian. They are well
defined by the agreement hypothesis and continuous on $\Omega$, since each
is continuous on the finitely many relatively closed sets $\Omega\cap K_j$
covering $\Omega$. Fix $x,y\in\Omega$; the segment $[x,y]$ lies in
$\Omega$. Each intersection $[x,y]\cap K_j$ is a closed subsegment, and
these finitely many subsegments cover $[x,y]$, so taking all their
endpoints as partition points gives $0=s_0<s_1<\dots<s_m=1$ such that
every segment $[x+s_{i-1}(y-x),\,x+s_i(y-x)]$ lies in a single $K_j$. For
a segment $[a,b]$ in the interior of $K_j$, the fundamental theorem of
calculus gives
\begin{equation}\label{eq:segment-integral}
 u(b)-u(a)=\int_0^1g\bigl(a+s(b-a)\bigr)\cdot(b-a)\,ds.
\end{equation}
If $[a,b]\subset K_j$ meets the boundary of $K_j$, choose a point $z$ of
$\Omega$ in the interior of $K_j$, which exists because $\Omega$ is open
and meets $K_j$. For $0<\varepsilon\leq1$ the segment with endpoints
$(1-\varepsilon)a+\varepsilon z$ and $(1-\varepsilon)b+\varepsilon z$ lies
in $\Omega$ and in the interior of $K_j$ by convexity, so
\eqref{eq:segment-integral} holds for it, and letting $\varepsilon\to0$
gives \eqref{eq:segment-integral} for $[a,b]$, by the continuity of $u$
and $g$ on $\Omega\cap K_j$. Summing over the subsegments,
\[
 u(y)-u(x)=\int_0^1g\bigl(x+s(y-x)\bigr)\cdot(y-x)\,ds,
\]
so $u(y)-u(x)-g(x)\cdot(y-x)=o(|y-x|)$ as $y\to x$, by the continuity of
$g$. Thus $u$ is differentiable with $\nabla u=g$, and $u\in C^1(\Omega)$.
The same argument applied to $g$, whose restriction to the interior of
each $K_j$ is $C^1$ with derivative $h$, gives $\nabla g=h$, so
$u\in C^2(\Omega)$.
\end{proof}

\bibliographystyle{abbrv}
\bibliography{main}

\end{document}